\let\oldtikzcd\tikzcd
\let\oldendtikzcd\endtikzcd
\def\tikzcd{\bgroup\mathsurround0pt\oldtikzcd}
\def\endtikzcd{\oldendtikzcd\egroup}
\title{Rectification of interleavings and a persistent Whitehead theorem}
\author{Edoardo Lanari}
\address{Institute of Mathematics, Czech Academy of Sciences, Prague, Czech Republic}
\author{Luis Scoccola}
\address{Department of Mathematics, Northeastern University}
\numberwithin{equation}{section}
\newtheorem{defn}{Definition}[section]
\newtheorem{lem}[defn]{Lemma}
\newtheorem{prop}[defn]{Proposition}
\newtheorem{cor}[defn]{Corollary}
\newtheorem{conjecture}[defn]{Conjecture}
\newtheorem{thmx}{Theorem}
\theoremstyle{remark}
\newtheorem{rmk}[defn]{Remark}
\newtheorem{eg}[defn]{Example}
\newcommand{\leng}[1]{{\scriptscriptstyle #1}}
\newcommand{\interleaving}[6]{#1 : #3 \prescript{}{\leng{#6}}{\longleftrightarrow}_{\leng{#5}}\, #4 : #2}
\newcommand{\interleavingwo}[4]{#1 \prescript{}{\leng{#4}}{\longleftrightarrow}_{\leng{#3}}\, #2}
\newcommand{\define}[1]{\textbf{\boldmath{#1}}}
\newcommand{\ZZ}{\mathbf{Z}}
\newcommand{\NN}{\mathbf{N}}
\newcommand{\RR}{\mathbf{R}}
\newcommand{\II}{{\mathbf{I}}}
\newcommand{\III}{{\mathcal{I}}}
\newcommand{\JJJ}{{\mathcal{J}}}
\newcommand{\Top}{\mathbf{Top}}
\newcommand{\cTop}{\mathbf{Top}_{\mathsf{CGWH}}}
\newcommand{\sSet}{\mathbf{sSet}}
\renewcommand{\SS}{\mathbf{S}}
\newcommand{\pSm}{\SS^{\RR^m}}
\newcommand{\Sps}{\SS_{\mathsf{sc},\bullet}}
\newcommand{\Sp}{\SS_{\bullet}}
\newcommand{\Ss}{\SS_{\mathsf{sc}}}
\newcommand{\vect}{\mathbf{Vec}}
\newcommand{\SQ}{\mathbf{sq}}
\renewcommand{\R}{\mathbb{R}}
\newcommand{\N}{\mathbb{N}}
\newcommand{\NNN}{\mathcal{N}}
\newcommand{\obj}{\mathsf{Obj}}
\newcommand{\M}{\mathcal{M}}
\newcommand{\VR}{\mathsf{VR}}
\newcommand{\Met}{\mathbf{Met}}
\newcommand{\Set}{\mathbf{Set}}
\newcommand{\Grp}{\mathbf{Grp}}
\newcommand{\Ho}{\mathsf{Ho}}
\newcommand{\h}{\mathsf{h}}
\newcommand{\id}{\mathsf{id}}
\newcommand{\shi}{\mathsf{S}}
\newcommand{\floor}[1]{\lfloor #1 \rfloor}
\DeclareMathOperator{\colim}{colim}
\newcommand{\even}{\mathsf{e}}
\newcommand{\odd}{\mathsf{o}}
\renewcommand{\j}{\mathsf{j}}
\DeclareMathOperator{\Sing}{Sing}
\renewcommand{\colon}{:}
\DeclareMathOperator{\Hom}{Hom}
\newcommand{\sslash}{\mathbin{/\mkern-6mu/}}
\renewcommand{\epsilon}{\varepsilon}
\renewcommand{\phi}{\varphi}
\begin{document}

\begin{abstract}
    The homotopy interleaving distance, a distance between persistent spaces,
    was introduced by Blumberg and Lesnick
    and shown to be universal, in the sense that it is
    the largest homotopy-invariant distance for which
    sublevel-set filtrations of close-by real-valued functions are close-by.
    There are other ways of constructing homotopy-invariant distances,
    but not much is known about the relationships between these choices.
    We show that
    other natural distances differ
    from the homotopy interleaving distance in at most a multiplicative constant,
    and prove versions of the persistent
    Whitehead theorem,
    a conjecture of Blumberg and Lesnick that relates morphisms that induce interleavings
    in persistent homotopy groups to stronger homotopy-invariant notions of interleaving.
\end{abstract}

\maketitle


\section{Introduction}

\paragraph{Context.}

Many of the main theoretical tools of Topological Data Analysis (TDA) come in the form of stability theorems.
One of the best known stability theorems, due to Cohen-Steiner, Edelsbrunner, and Harer (\cite{CEH}), implies that
if \(f, g \colon X \to \R\) are sufficiently tame functions,
such as piecewise linear functions on the geometric realization of a finite simplicial complex, then 
\[
    d_B(D_n(f),D_n(g)) \leq \| f - g \|_{\infty}\,.
\]
Here, \(D_n(f)\) denotes the \(n\)-dimensional \textit{persistence diagram} of \(f\).
This consists of a multiset of points of the extended plane \(\overline{\R}^2\) that captures
the isomorphism type of the \(n\)th \textit{persistent homology} of the sublevel-sets of \(f\),
that is, of the functor \(\RR \to \vect\) obtained by composing the sublevel-set filtration 
\(r \mapsto f^{-1}(-\infty,r] \colon \RR \to \Top\) with the \(n\)th homology
functor \(H_n \colon \Top \to \vect\), where \(\RR\) denotes the poset of real numbers and \(\vect\)
denotes the category of vector spaces over some fixed field.
The distance \(d_B\) is the \textit{bottleneck distance}, a combinatorial
way of comparing persistence diagrams.

This result was later refined in \cite{CCGGO} to the \textit{algebraic stability theorem},
which says that for \(F,G \colon \RR \to \vect\) sufficiently
tame functors, one has
\[
    d_B(D(F),D(G)) \leq d_I(F,G),
\]
where, as before, \(D(F)\) denotes the persistence diagram of \(F\), which describes
the isomorphism type of \(F\), and
\(d_I\) denotes the \textit{interleaving distance}, a distance between
functors \(\RR \to C\) for any fixed category \(C\), which we recall below.

Stability theorems imply that pipelines like the following, popular in TDA,
are robust to perturbations of the input data and can be used for inference purposes:
\[
\footnotesize
    \text{\framebox[1.1\width]{\begin{tabular}{c}Data\end{tabular}}} \xrightarrow{} 
    \text{\framebox[1.1\width]{\begin{tabular}{c}Persistent spaces\end{tabular}}} \xrightarrow{H_n} 
    \text{\framebox[1.1\width]{\begin{tabular}{c}Persistent vector spaces\end{tabular}}} \xrightarrow{D} 
    \text{\framebox[1.1\width]{\begin{tabular}{c}Persistence diagrams\end{tabular}}}
\]
For example, the algebraic stability theorem tells us that the last step is stable, if
we endow persistent vector spaces (\(\vect^\RR\)) with the interleaving distance and
persistence diagrams with the bottleneck distance,
while functoriality implies that the second step is stable,
if we also endow persistent spaces (\(\Top^\RR\)) with the interleaving distance (\cite{BS}).

\paragraph{Problem statement.}
Although useful in some applications,
the interleaving distance on \(\Top^\RR\) is often too fine;
for instance, it is easy to see that Vietoris--Rips and
other functors \(S \colon \Met \to \Top^\RR\) are not stable with respect to
the Gromov--Hausdorff distance on metric spaces and the interleaving distance on \(\Top^\RR\).
However, when one composes these functors with a homotopy-invariant functor,
such as homology \(H_n \colon \Top^\RR \to \vect^\RR\),
the composite \(H_n \circ S \colon \Met \to \vect^\RR\) turns out to be stable (\cite{stability-rips}).
So, in these cases, one way to make the first step in the pipeline above stable
is to force the interleaving distance on \(\Top^\RR\) to be homotopy-invariant (\cite[Section~1.2]{lb}).
For this reason, many homotopy-invariant adaptations of the interleaving
distance on \(\Top^\RR\) and related categories have been proposed (see, e.g., \cite{lesnickthesis,lb,pht}).
In order to describe some of these adaptations,
we recall the definition of the interleaving distance \(d_I\).

Let \(C\) be a category.
Given \(\delta \geq 0 \in \RR\) and \(F \colon \RR \to C\), let \(F^\delta \colon \RR \to C\)
be given by \(F^\delta(r) := F(r+\delta)\), with the obvious structure morphisms.
One says that \(F,G \in C^\RR\) are \textit{\(\delta\)-interleaved} if there exist
natural transformations \(f \colon F \to G^\delta\) and \(g \colon G \to F^\delta\)
such that \(g^\delta \circ f \colon F \to F^{2\delta}\) equals the natural
transformation \(F \to F^{2\delta}\) given by the structure morphisms of \(F\),
and such that \(f^\delta \circ g \colon G \to G^{2\delta}\) equals the natural
transformation \(G \to G^{2\delta}\) given by the structure morphisms of \(G\).
Then \(d_I(F,G) := \inf\left(\{\delta \geq 0 \,\colon\,
\text{\(F\) and \(G\) are \(\delta\)-interleaved}\} \cup \{\infty\}\right)\).

Blumberg and Lesnick (\cite{lb}) define \(X,Y\in \Top^\RR\) to be \textit{\(\delta\)-homotopy interleaved} if there exist
weakly equivalent persistent spaces \(X'\simeq X\) and \(Y' \simeq Y\) such that \(X'\) and \(Y'\) are \(\delta\)-interleaved,
and use homotopy interleavings to define the \textit{homotopy interleaving distance}, denoted \(d_{HI}\).
The homotopy interleaving distance is the (metric) quotient of the interleaving distance by the equivalence relation given by
weak equivalence, in the sense that \(d_{HI}\) is the largest homotopy-invariant distance that is bounded above
by the interleaving distance.

Instead of taking a metric quotient, one can take the categorical quotient of \(\Top^\RR\)
by weak equivalences, and define interleavings directly in the homotopy category,
similar to what is done in, e.g., \cite{lesnickthesis,pht,KS}.
In order to do this, one notes that the shift functors \((-)^\delta \colon \Top^\RR \to \Top^\RR\)
preserve weak equivalences and thus induce functors \((-)^\delta \colon \Ho\left(\Top^\RR\right) \to \Ho\left(\Top^\RR\right)\).
This lets one copy the definition of interleaving, but in the homotopy category,
which gives the notions of \textit{interleaving in the homotopy category} and of
\textit{interleaving distance in the homotopy category}, denoted \(d_{IHC}\).

A third option, also introduced in \cite{lb},
is to compare objects of \(\Top^\RR\) using interleavings in
\(\Ho\left(\Top\right)^\RR\), called \textit{homotopy commutative interleavings},
which give rise to the \textit{homotopy commutative interleaving distance}, denoted \(d_{HC}\).

We have described three homotopy-invariant notions of interleaving in decreasing order of coherence. On one end, homotopy interleavings can be equivalently described as homotopy coherent diagrams of spaces (\cite[Section~7]{lb}). On the other end, homotopy commutative interleavings correspond to diagrams in the homotopy category of spaces.
It is clear that \(d_{HI} \geq d_{IHC} \geq d_{HC}\),
and that any of the homotopy-invariant interleavings induce interleavings in homotopy groups.

Two questions arise: Are the three distances in some sense equivalent or are they fundamentally different? If a map induces interleavings in homotopy groups, does it follow that the map is part of one of the homotopy-invariant notions of interleaving? A conjectural answer to the second question is given in
\cite[Conjecture~8.6]{lb}, where it is conjectured that when \(X\) and \(Y\) are
a kind of persistent CW-complex of finite dimension \(d \in \N\),
if there exists a morphism between them inducing a \(\delta\)-interleaving
in homotopy groups, then \(X\) and \(Y\) are \(c\delta\)-homotopy interleaved,
for a constant \(c\) that only depends on \(d\).

\paragraph{Contributions.}
Homotopy interleavings compose
in any functor category of the form \(\M^{\RR^m}\) for \(\M\) a cofibrantly generated model category (\cref{triangle-inequality}).
This allows us
to state some of our results for any cofibrantly generated model category \(\M\), or
for a category of spaces \(\SS\), which can
be instantiated to be any of the Quillen equivalent model categories of topological spaces
or simplicial sets (\cref{isometry-quillen-equiv}).
Our first theorem is the following rectification result.
\begin{thmx}
    \label{strictification-step}
    Let \(\M\) be a cofibrantly generated model category, let \(X, Y \in \M^\RR\),
    and let \(\delta > 0 \in \RR\).
    If \(X\) and \(Y\) are \(\delta\)-homotopy commutative interleaved, then
    they are \(c\delta\)-homotopy interleaved for every $c > 2$.
\end{thmx}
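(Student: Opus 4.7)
The plan is to convert the given homotopy commutative interleaving into a strict one between weakly equivalent replacements of $X$ and $Y$ by discretizing $\RR$ to a fine $\epsilon$-grid, rectifying the resulting small homotopy commutative diagram in $\M$, and extending back to $\RR$. The slack $c\delta - 2\delta > 0$ will absorb both the grid granularity and the shifts needed to strictify the composites $gf$ and $fg$.

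Concretely, I would fix $c > 2$ and choose $\epsilon > 0$ with $c\delta > 2\delta + 2\epsilon$, and work with the subposet $P = \epsilon\Z \subset \RR$. The restricted interleaving data assembles into a single functor $Z : Q \to \Ho(\M)$, where $Q$ is the small direct poset with objects $\{X_r, Y_r : r \in P\}$ whose morphisms are the structure maps of $X, Y$ together with the interleaving arrows $X_r \to Y_{r+\delta}$ and $Y_r \to X_{r+\delta}$; the homotopy commutativity axioms of the given interleaving are exactly the compatibility conditions required for $Z$ to be well-defined as a functor into $\Ho(\M)$. The core step is to lift $Z$ to a strict functor $\tilde Z : Q \to \M$. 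Because $\M$ is cofibrantly generated and $Q$ is direct, one can proceed by induction along an $r$-grading of $Q$, at each stage choosing a cofibrant-fibrant model for the new object and a strict representative of each incoming homotopy class; since between cofibrant-fibrant objects every morphism in $\Ho(\M)$ lifts to a morphism in $\M$ unique up to homotopy, any non-strict naturality square can be repaired by postcomposing with a structure map, at the cost of one shift of $\delta$ in each of the two interleaving directions. This is where the factor of $2$ in the bound originates.

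Extending $\tilde Z$ from $P$ to $\RR$ by a pointwise mapping telescope (or a left Kan extension) then yields functors $X', Y' : \RR \to \M$ that are strictly $(2\delta + 2\epsilon)$-interleaved, hence $c\delta$-interleaved, and a standard comparison argument establishes weak equivalences $X' \simeq X$ and $Y' \simeq Y$ in $\M^\RR$, completing the $c\delta$-homotopy interleaving. The main obstacle will be the inductive rectification step on $Q$: one must simultaneously rectify the structure maps of $X$ and $Y$, the interleaving maps, and the absorbing shifts required to strictify each non-commuting triangle, while keeping the total shift bounded by $2\delta + 2\epsilon$. The pair of independent $\delta$-shifts needed to rectify the composites $gf$ and $fg$ to the respective structure maps of $\tilde X$ and $\tilde Y$ is what the factor $2$ in $c > 2$ fundamentally pays for, while the arbitrarily small excess $c\delta - 2\delta > 0$ absorbs the granularity of the discretization.
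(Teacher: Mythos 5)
There is a genuine gap at the heart of your plan: the inductive rectification of the full two-row diagram $Q \to \Ho(\M)$. The shape $Q$ you describe contains commuting triangles and squares (e.g.\ $X_r \to Y_{r+\delta} \to X_{r+2\delta}$ versus the structure map, and the squares mixing structure maps with interleaving maps), and a homotopy commutative diagram of such a shape cannot in general be lifted to a strict diagram of the same shape, even between cofibrant--fibrant objects: choosing representatives of the homotopy classes one object at a time does not make the squares commute, and the obstruction is real, not just technical. This is exactly the phenomenon the paper isolates: the Toda-bracket lower bound (\cref{lower-bound-prop}) shows $d_{HI} \neq d_{HC}$, and \cref{impossibility-result} shows that for genuinely two-dimensional shapes no rectification with any constant exists. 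Your proposed repair --- ``any non-strict naturality square can be repaired by postcomposing with a structure map, at the cost of one shift of $\delta$'' --- is precisely the step that carries all the content of the theorem, and as stated it is an assertion, not an argument: fixing one square by shifting a map perturbs the adjacent triangles and squares, and you give no scheme guaranteeing that the shifts needed to restore all of them simultaneously stay bounded by $2\delta + 2\epsilon$ rather than accumulating. So the factor of $2$ does not come, as you claim, from two independent shifts rectifying $gf$ and $fg$; in your setup it is not established at all.

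The paper avoids this coherence problem by never attempting to rectify the interleaving diagram itself. After rescaling and restricting to $\ZZ$ (your discretization step is fine and matches \cref{delta-to-one} and \cref{from-R-to-Z}, which cost the additive slack you budget for), it extracts only the diagonal zig-zag $\cdots \to \h B(-1) \to \h A(0) \to \h B(1) \to \h A(2) \to \cdots$, a $\ZZ$-indexed \emph{linear} diagram in $\Ho(\M)$. Linear diagrams have no squares, hence no coherence obstructions, and \cref{h-is-smothering} (proved by an inductive homotopy-extension/lifting argument using the replacement of \cref{l:nice repl}) shows $\h \colon \Ho(\M^\ZZ) \to \Ho(\M)^\ZZ$ is full and conservative, so the zig-zag lifts to a strict $C \in \M^\ZZ$. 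The interleaving is then recovered by comparing $A$ and $B$ with the even and odd reparametrizations of $C$ via \cref{triangle-inequality}; the factor $2$ arises from this even/odd sampling. If you want to salvage your approach, you would need either to restrict the data you rectify to a linear subdiagram (which is the paper's move) or to supply a genuine inductive bookkeeping argument showing the shifts introduced when repairing squares of $Q$ do not propagate --- and the lower-bound results indicate that any such argument must lose at least a constant factor, so it cannot be a routine ``choose representatives'' induction.
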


It follows that we have \(2d_{HC} \geq d_{HI}\geq d_{IHC} \geq d_{HC}\).
The above rectification result is different from many such results in homotopy theory, where a diagram of a certain shape, in the homotopy category, is lifted to a strict diagram of the same shape.
The difference lies in the fact that the shape of the strict diagram we construct is different from the shape of the diagram in the homotopy category.
In fact, building on the suggestion in \cite{lb} of using Toda brackets to give a lower bound for the above rectification, we show (\cref{lower-bound-prop}) that for \(\M = \Top\), if \(c\,d_{HC} \geq d_{HI}\) then \(c \geq 3/2\), so that, in particular, \(d_{HC} \neq d_{HI}\).
This means that rectification in the usual sense is not possible in general, and thus standard results are not directly applicable.
We also show that \cref{strictification-step} has no analogue for multi-persistent spaces (\cref{impossibility}).

Our second theorem relates morphisms inducing interleavings in homotopy groups to interleavings in the homotopy category.
See \cref{persistent CW} for the notion of persistent CW-complex and
\cref{interleaving-in-homotopy-groups} for the notion of interleaving induced in persistent homotopy groups.

\begin{thmx}
    \label{homotopy-groups-to-noncoherent}
    Fix \(m \geq 1 \in \N\) and \(d \in \N\).
    Let \(X,Y \in \SS^{\RR^m}\) be (multi-)persistent spaces that are assumed to be
    projective cofibrant and \(d\)-skeletal if \(\SS = \sSet\), or persistent CW-complexes
    of dimension \(d\) if \(\SS = \Top\).
    Let \(\delta \geq 0 \in \RR^m\).
    If there exists a morphism in the homotopy category \(X \to Y^\delta \in \Ho\left(\SS^{\RR^m}\right)\)
    that induces \(\delta\)-interleavings in all homotopy groups,
    then \(X\) and \(Y\) are \((4(d+1)\delta)\)-interleaved in the homotopy category.
\end{thmx}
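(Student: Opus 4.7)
The plan is to prove this as a quantitative, persistent Whitehead theorem by cellular induction on the skeletal dimension of $Y$. By \cref{isometry-quillen-equiv} I reduce to the case $\SS = \sSet$, assume $X$ and $Y$ are projective cofibrant and $d$-skeletal, and, after a fibrant replacement of $Y$ in the projective model structure on $\sSet^{\RR^m}$, represent the given homotopy class by an actual natural transformation $f : X \to Y^\delta$. The hypothesis that $f$ induces $\delta$-interleavings on each persistent homotopy group supplies, for every $n$, a natural transformation $\psi_n : \pi_n(Y) \to \pi_n(X)^\delta$ whose composite with $\pi_n(f)$ is the internal $2\delta$-shift on $\pi_n(X)$, and symmetrically on $\pi_n(Y)$.

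The core construction produces, by induction on $k = 0, 1, \ldots, d$, compatible natural transformations $g^{(k)} : Y^{(k)} \to X^{s_k \delta}$ from the $k$-skeleton of $Y$, with $s_k$ increasing by a fixed additive constant at each step. At stage $k$, each $k$-cell of $Y$ has an attaching map $\alpha : \partial \Delta^k \to Y^{(k-1)}$, and the obstruction to extending $g^{(k-1)}$ across it is the class $[g^{(k-1)} \circ \alpha] \in \pi_{k-1}(X^{s_{k-1} \delta})$. Because this class is the image under $g^{(k-1)}$ of a class in $\pi_{k-1}(Y^{(k-1)})$ that is null-homotopic already in $Y^{(k)}$, a diagram chase through the interleaving relations for $\psi_{k-1}$ and $\pi_{k-1}(f)$ expresses it as the internal $2\delta$-shift of a null class. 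Hence the obstruction vanishes after shifting by $2\delta$, the extension exists at cost $s_k = s_{k-1} + 2$, and after $d+1$ stages we obtain a map $g : Y \to X^{2(d+1)\delta}$ in the homotopy category.

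The main obstacle will be the naturality and bookkeeping of the inductive step: one must verify that the cell-by-cell extensions assemble into a genuine natural transformation of $\RR^m$-indexed functors, not just a pointwise family of maps. I would handle this by phrasing each inductive step as a lifting problem in the projective model structure on $\sSet^{\RR^m}$ and solving it via a transfinite application of the Small Object Argument, using that the projective generating cofibrations have domains and codomains of the form $F_r(\partial \Delta^k) \to F_r(\Delta^k)$, where $F_r$ is the free functor at $r \in \RR^m$; this is also where the multi-persistent case $m \geq 1$ requires no extra work, since shifts in $\RR^m$ and the projective cell structure are both compatible with the coordinate-wise partial order.

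Finally, to upgrade $g$ to an actual $(c\delta)$-interleaving in $\Ho(\SS^{\RR^m})$ one must verify that $g \circ f$ and $f \circ g$ agree, in the homotopy category, with the appropriate shifted structure morphisms. Since these composites induce the correct maps on all persistent homotopy groups by construction, applying the same obstruction-theoretic argument a second time---now to the homotopies comparing the composites with the structure maps---costs another $2(d+1)\delta$, yielding the claimed bound $4(d+1)\delta$.
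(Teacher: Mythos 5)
Your overall strategy---build the reverse map skeleton by skeleton, paying a shift per dimensional stage, then pay again to verify the interleaving identities---is the same general shape as the paper's argument, but as written it has two genuine gaps. First, the inductive obstruction step is not justified: the obstruction $[g^{(k-1)}\circ\alpha]\in\pi_{k-1}(X(\cdot))$ cannot be killed merely because $[\alpha]$ dies in $Y^{(k)}$. The interleaving hypothesis only controls $\pi_*(f)$ and its partial inverse $\psi_*$, so to transfer ``null in $Y$'' to ``null in a shift of $X$'' you must know that $f\circ g^{(k-1)}$ agrees (up to shift and up to homotopy, naturally in $r$) with the structure morphisms---i.e.\ the induction hypothesis must carry compatibility with $f$, which your proposal never maintains. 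The paper builds this in by replacing $f$ by a projective fibration $p$ between fibrant objects and formulating the induction as a shifted right lifting property against $p$ (\cref{jardines-lemma}, \cref{lifting-property-n-cofibrations}); once you do this, each stage genuinely costs $4\delta$, not your claimed $2\delta$, because the strict representative $X\to Y^\delta$ of a map inducing a $\delta$-interleaving only induces a $(0,2\delta)$-interleaving, and each stage needs two corrections (one to kill the obstruction, one to produce a lift compatible with $p$). Relatedly, your appeal to the small object argument to get naturality does not work off the shelf: shifted lifting problems are not solved by the SOA, and the cost must be shown to accumulate per dimensional extension (a pushout of a coproduct of generating cofibrations handled all at once), not per cell---this is exactly what the paper's notion of $n$-cofibration and \cref{lifting-property-n-cofibrations} are for.

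Second, the final verification step rests on a false principle: that $g\circ f$ and $f\circ g$ ``induce the correct maps on persistent homotopy groups'' does not imply they are homotopic to the shifted structure maps---that implication is a Whitehead-type statement of the very kind being proved, so the argument is circular as stated. The paper instead verifies one composite on the nose (it is built into the lifting problem $p\circ g'=\text{shift}$) and handles the other by a relative shifted-lifting argument against the cylinder inclusion $X\coprod X\to I\times X$, using that this inclusion is again a $d$-cofibration, together with \cref{homotopy-lifting}-style manipulations; some such cylinder argument, carried out naturally in $r\in\RR^m$, is unavoidable. Finally, your opening reduction of the $\Top$ case to $\sSet$ via \cref{isometry-quillen-equiv} is also glossed: $\Sing$ of a $d$-dimensional persistent CW-complex is not $d$-skeletal projective cofibrant, so one must either produce a $d$-stage cofibrant simplicial model (nontrivial) or, as the paper does, run the argument uniformly for $n$-cofibrant objects in both models.
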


Together, \cref{strictification-step} and \cref{homotopy-groups-to-noncoherent} give
a positive answer to a version of the persistent Whitehead conjecture \cite[Conjecture~8.6]{lb} (see \cref{comparison-persistent-whitehead} for a discussion and \cref{bl-conjecture} for a statement of the conjecture).



\paragraph{Structure of the paper.}
In \cref{background-and-conventions}, we recall and give references for the necessary background.
In \cref{section-strict-interleavings}, we prove \cref{strictification-step},
we provide a lower bound for the rectification of homotopy commutative interleavings between persistent spaces,
and show that \cref{strictification-step} has no analogue for multi-persistent spaces.
In \cref{cofibrant-spaces-section}, we characterize projective cofibrant
(multi-)persistent simplicial sets as filtered simplicial sets.
In \cref{homotopy-category-and-homotopy-groups}, we prove \cref{homotopy-groups-to-noncoherent}.

\paragraph{Acknowledgements.}
The first named author gratefully acknowledges the support of Praemium Academiae of M.~Markl and RVO:67985840.
The second named author thanks Dan Christensen, Rick Jardine, Mike Lesnick, and Alex Rolle for insightful
conversations.
We thank Alex Rolle for detailed feedback and for suggesting \cref{impossibility-result} to us, Mike Lesnick for suggesting improvements to the constant of \cref{strictification-step}, and the anonymous referee for helpful feedback.

\section{Background and conventions}
\label{background-and-conventions}

The main purpose of this section is to fix notation and to provide the reader with references.
This section can be referred to as needed, but we do recommend going over \cref{interleavings-subsection} as it contains the notions of interleaving relevant to us.

We assume that the reader is comfortable with the language of category theory.
Throughout the paper, we will use the term \define{distance} to refer to any
\define{extended pseudo metric} on a (possibly large) set \(X\), that is, to any function
\(d_X : X \times X \to [0,\infty]\) that is symmetric, satisfies the triangle inequality, and is \(0\) on the diagonal.

\subsection{Spaces and model categories}

\subsubsection{Spaces}
We work model-independently whenever possible.
This means that whenever we say \define{space} we will mean either topological
space or simplicial set. Results stated for spaces will hold for both possible models.
The category of spaces will be denoted by \(\SS\).

For a general introduction to simplicial sets see, e.g., \cite{GJ} or \cite[Chapter~3]{Ho}.
We denote the geometric realization functor for simplicial sets by \(|-| \colon \sSet \to \Top\).

\subsubsection{Model categories}
The theory of model categories was introduced in \cite{Quillen}; for a modern and thorough development of this theory we recommend \cite{Ho} and \cite{H}.

We recall that two objects \(x,y\in \M\) of a model category $\M$ are said to be \define{weakly equivalent} if they are isomorphic in \(\Ho(\M) \), which happens if and only if they are connected by a zig-zag of weak equivalences in \(\M\).
This is an equivalence relation, which we denote by \(x\simeq y \).
When there is risk of confusion, morphisms in \(\Ho(\M)\) will be surrounded by square brackets \([f]\), to distinguish them from morphisms in \(\M\).

Two of the main model structures of interest to us are the
\define{Quillen model structure} on \(\Top\), the category of topological spaces
(\cite[Chapter~1,~Section~2.4]{Ho}), and the
\define{Kan--Quillen model structure} on \(\sSet\), the category of simplicial sets (\cite[Chapter~3]{Ho}).
We recall that the 
geometric realization functor \(\vert - \vert \colon \sSet \to \Top\) is left adjoint to the
singular functor \(\Sing \colon \Top \to \sSet\), and that, together, they form a \define{Quillen equivalence} (\cite[Chapter~1,~Section~1.3]{Ho}, \cite[Theorem~3.6.7]{Ho}).
For completeness, we mention that there is a subcategory \(\cTop \subseteq \Top\), the category of
compactly-generated weakly Hausdorff topological spaces (called \textit{compactly generated spaces} in \cite[Definition~2.4.21]{Ho}),
that is often used instead of \(\Top\).
The Quillen model structure on \(\Top\) restricts to a model structure on \(\cTop\), and the inclusion \(\cTop \to \Top\)
is part of a Quillen equivalence (\cite[Theorem~2.4.25]{Ho}).
This model structure is, in some respects, better behaved than
the Quillen model structure on topological spaces, and is in fact the model of space used in \cite{lb}.
We will not concern ourselves with these subtleties
since, by the observations in \cref{isometry-quillen-equiv}, there is no essential
difference between using \(\Top\) or \(\cTop\) when studying homotopy-invariant notions of interleaving.

We will make use of the notion of \define{cofibrantly generated model category} (\cite[Chapter~2,~Section~2.1]{Ho}).
Recall that the Kan--Quillen model structure on simplicial sets is cofibrantly generated,
where a set of generating cofibrations consists of the boundary inclusions
\(\partial \Delta^n \hookrightarrow \Delta^n\) for \(n\geq 0 \) (\cite[Theorem~3.6.5]{Ho}).
The Quillen model structure on topological spaces is also cofibrantly generated, with
a set of generating cofibrations given by \(\{S^{n-1}\hookrightarrow D^n\}_{n \geq 0}\) (\cite[Theorem~2.4.19]{Ho}).

We conclude by recalling the basic properties of projective model structures.
Given a model category \(\M\) and a small category \(\II\), the \define{projective model structure} on the functor category \(\M^{\II}\) is, when it exists,
the model structure whose fibrations (respectively weak equivalences) are those which are 
pointwise fibrations (respectively weak equivalences) of \(\M\).

The projective model structure on \(\M^\II\) exists, and is cofibrantly generated, whenever \(\M\) is cofibrantly generated.
Moreover, if \(\III\) and \(\JJJ\) are, respectively, generating cofibrations and generating trivial cofibration for the model
structure of \(\M\), then
\(\left\{\II(i,-)\odot f \ \colon \ i\in \II,\, f\in \III \right\}\) and
\(\left\{\II(i,-)\odot g \ \colon \ i\in \II,\, g\in \JJJ \right\}\) are, respectively, generating cofibrations and generating trivial
cofibrations for the projective model structure, where, given a
functor \(F\colon \II \to \Set\) and an object \(X \in \M\),
the functor \(F\odot X \colon \II\to \M\) is defined by \(i\mapsto \coprod_{a\in F(i)} X\)
(\cite[Section~11.6]{H}).
For simplicity, we denote \(\II(i,-) \odot X\) by \(i \odot X\).

We are especially interested in the projective model structure when the indexing category is a poset \((P,\leq)\).
In this case, if \(r \in P\) and \(X \in \M\), then \(r\odot X\)
is the functor that takes the value \(X\) on every \(s \geq r\), and
has as value the initial object of \(\M\) when \(s \nleq r\).
The non-trivial structure morphisms of this functor are the identity of \(X\).

Note that we have a functor \(\h \colon \Ho\left(\M^\II\right) \to \Ho(\M)^\II\)
by the universal property of \(\Ho\left(\M^\II\right)\).

\subsection{Interleavings and interleavings up to homotopy}
\label{interleavings-subsection}

\subsubsection{Strict interleavings}
\label{strict-interleavings-section}

We denote the poset of real numbers with their standard order by \(\RR\), and
for \(m \in \N\), we let \(\RR^m\) be the set of \(m\)-tuples of real numbers
with the product order. We set \(\overline{m} = \{i \colon 1 \leq i \leq m\}\), so that \((\epsilon_i)_{i \in \overline{m}} \leq (\delta_i)_{i\in \overline{m}} \in \RR^m\)
if and only if \(\epsilon_i \leq \delta_i\) for all \(1 \leq i \leq m\). We denote the element \((0, \dots, 0) \in \RR^m\) by \(0\).

Fix a category \(C\) and a natural number \(m \geq 1\).
An \define{\(m\)-persistent object} of \(C\) is any functor of the form \(\RR^m \to C\).
We often refer to \(m\)-persistent objects simply as \define{persistent objects}
or as \define{multi-persistent objects} when we want to stress the fact that \(m\)
is not necessarily \(1\).
Fix persistent objects \(X,Y,Z \in C^{\RR^m}\), \(r,s \in \RR^m\), and \(\epsilon,\delta \geq 0 \in \RR^m\).
We use the following conventions.

\begin{itemize}
    \item For \(f \colon X \to Y\) a natural transformation, denote the \(r\)-component of \(f\) by \(f_r \colon X(r) \to Y(r)\).

    \item Assume \(r \leq s\). The structure morphism \(X(r) \to X(s)\) will be denoted by \(\phi^X_{r,s}\).

    \item The \define{\(\delta\)-shift to the left} of \(X\) is the functor \(X^\delta \colon \RR^m \to C\) defined by \(X^\delta(r) = X(r+\delta)\),
        with structure morphisms \(\phi^{X^\delta}_{r,s} := \phi^{X}_{r+\delta,s+\delta}\).
        Shifting to the left gives a functor \((-)^\delta : C^{\RR^m} \to C^{\RR^m}\).
        Dually, there is a \define{\(\delta\)-shift to the right} functor \(\delta \cdot (-) : C^{\RR^m} \to C^{\RR^m}\)
        defined by mapping \(X\) to the persistent object \(\delta \cdot X\), with values given by \((\delta \cdot X)(r) = X(r-\delta)\).

    \item Natural transformations \(f \colon X \to Y^\delta\) will be referred to as
        \define{\(\delta\)-morphisms}, and will often be denoted by \(f \colon X \to_\delta Y\).
        Since we have natural bijections
        \(\Hom(\epsilon \cdot X,Y^\delta) \cong \Hom(X, Y^{\epsilon + \delta}) \cong \Hom((\epsilon + \delta) \cdot X, Y)\),
        we can treat a \(\delta\)-morphism \(f \colon X \to_\delta Y\) as \(f \colon X \to Y^\delta\) or as \(f \colon \delta \cdot X \to Y\).

    \item Assume \(\epsilon \leq \delta\) and let \(f \colon X \to_\epsilon Y\).
        We can compose the \(r\)-component of
        \(f\) with \(\phi^Y_{r+\epsilon,r+\delta} \colon Y(r+\epsilon) \to Y(r+\delta)\),
        giving \(\phi^Y_{r+\epsilon,r+\delta} \circ f_r \colon X(r) \to Y(r+\delta)\).
        Together, these components define the \define{shift} from \(\epsilon\) to \(\delta\)
        of \(f\), which is a \(\delta\)-morphism denoted \(\shi_{\epsilon,\delta}(f) \colon X \to_{\delta} Y\).

    \item Note that an \(\epsilon\)-morphism \(f \colon X \to_\epsilon Y\) can be composed with
        a \(\delta\)-morphism \(g \colon Y \to_\delta Z\), yielding an \((\epsilon+\delta)\)-morphism \(g^\epsilon \circ f \colon X \to_{\epsilon+\delta} Y\).
        This composition is associative and unital, and is natural with respect to shifts of morphisms.

    \item An \define{\((\epsilon,\delta)\)-interleaving} between \(X\) and \(Y\) consists of
        an \(\epsilon\)-morphism \(f \colon X \to_\epsilon Y\) together with a \(\delta\)-morphism \(g \colon Y \to_\delta X\)
        such that \(g^\epsilon \circ f = \shi_{0,\epsilon+\delta}(\id_X)\) and \(f^\delta \circ g = \shi_{0,\epsilon+\delta}(\id_Y)\).
        By \define{\(\delta\)-interleaving} we mean a \((\delta,\delta)\)-interleaving.

    \item If \(f \colon X \to_\epsilon Y\) and \(g \colon Y \to_\delta X\) form an \((\epsilon,\delta)\)-interleaving, we write \(\interleaving{f}{g}{X}{Y}{\epsilon}{\delta}\).
\end{itemize}

    Let \(\epsilon_1,\epsilon_2,\delta_1,\delta_2\geq 0 \in \RR^m\).
        Note that an \((\epsilon_1,\epsilon_2)\)-interleaving between \(X\) and \(Y\) can be composed
        with any \((\delta_1,\delta_2)\)-interleaving between \(Y\) and \(Z\), yielding an \((\epsilon_1+\delta_1,\epsilon_2+\delta_2)\)-interleaving.
The fact that interleavings compose implies that, when \(m = 1\), the formula
\[
    d_I(X,Y) = \inf\left(\{\delta \geq 0 \in \RR \colon \text{\(X\) and \(Y\) are \(\delta\)-interleaved}\}\cup \{\infty\}\right)
\]
defines an extended pseudo metric \(d_I \colon \obj\left(C^\RR\right) \times \obj\left(C^\RR\right) \to [0,\infty]\).
This is the \define{interleaving distance} on the class of objects of the category \(C^\RR\).
This notion of distance can be extended to objects of the functor category \(C^{\RR^m}\) (\cite{lesnickthesis}), but we will not make use of this extension.

\subsubsection{Interleavings up to homotopy}

If one is comparing objects of a category of functors of the form \(\RR^m \to \M\), for \(\M\) a model category,
it makes sense to want to find a homotopy-invariant notion of interleaving.
In this paper, we consider the following three homotopy-invariant relaxations of the notion of interleaving.
Let \(\M\) be a cofibrantly generated model category
and endow \(\M^{\RR^m}\) with the projective model structure.
Let \(X,Y \in \M^{\RR^m}\) and let \(\epsilon, \delta \geq 0 \in \RR^m\).

\begin{enumerate}
\item
Following \cite{lb}, we say that \(X\) and \(Y\) are \define{\((\epsilon,\delta)\)-homotopy interleaved} if there
exist \(X\simeq X'\) and \(Y\simeq Y'\) such that \(X'\) and \(Y'\) are \((\epsilon,\delta)\)-interleaved.

\item
Note that the shift functor \((-)^\delta \colon \M^{\RR^m} \to \M^{\RR^m}\)
maps weak equivalences to weak equivalences.
This implies that all the notions in \cref{strict-interleavings-section} have analogues in
the category \(\Ho\left(\M^{\RR^m}\right)\).
We say that \(X\) and \(Y\) are \define{\((\epsilon,\delta)\)-interleaved in the homotopy category}
if they are \((\epsilon,\delta)\)-interleaved as objects of \(\Ho\left(\M^{\RR^m}\right)\).

\item
Finally, as also done in \cite{lb},
we say that \(X\) and \(Y\) are \define{\((\epsilon,\delta)\)-homotopy commutative interleaved}
if their images \(\h X,\h Y \colon \RR^m \to \Ho(\M)\) are \((\epsilon,\delta)\)-interleaved.
\end{enumerate}

An \((\epsilon,\delta)\)-homotopy interleaving gives rise to an \((\epsilon,\delta)\)-interleaving
in the homotopy category, which, in turn, gives rise to an \((\epsilon,\delta)\)-homotopy commutative interleaving.

For each of the three homotopy-invariant notions of interleaving introduced above,
we have a corresponding extended pseudo metric
on the collection of objects of the category \(\M^\RR\).
Let \(X,Y \in \M^\RR\).
Following \cite{lb}, we define the \define{homotopy interleaving distance} as
\[
    d_{HI}(X,Y) = \inf\left(\{\delta \geq 0 \in \RR \colon \text{\(X\) and \(Y\) are \(\delta\)-homotopy interleaved}\}\cup \{\infty\}\right).
\]
The fact that the homotopy interleaving distance satisfies the triangle inequality follows from \cref{triangle-inequality}.
The \define{interleaving distance in the homotopy category} is
\[
    d_{IHC}(X,Y) = \inf\left(\{\delta \geq 0 \in \RR \colon \text{\(X\) and \(Y\) are \(\delta\)-interleaved in the homotopy category}\}\cup \{\infty\}\right).
\]
Again following \cite{lb}, the \define{homotopy commutative interleaving distance} is defined as
\[
    d_{HC}(X,Y) = \inf\left(\{\delta \geq 0 \in \RR \colon \text{\(X\) and \(Y\) are \(\delta\)-homotopy commutative interleaved}\}\cup \{\infty\}\right).
\]

\begin{rmk}
    \label{isometry-quillen-equiv}
Note that if \(\M \rightleftarrows \NNN\) is a Quillen equivalence between cofibrantly generated model categories, then the induced Quillen equivalence (\cite[Theorem~11.6.5]{H}) \(\M^{\RR^m} \rightleftarrows \NNN^{\RR^m}\) between the projective model structures respects
interleavings, in the sense that shifts commute with both the left and right adjoints.
This implies that, for any of the three homotopy-invariant notions of interleaving described above, we have that two functors on one side of the adjunction are \((\epsilon,\delta)\)-interleaved if and only if their images (along the derived adjunction) on the other side are \((\epsilon,\delta)\)-interleaved.
In particular, if \(m=1\), the two adjoints give an isometry between \(\M^\RR\) and \(\NNN^\RR\) independently of whether we use \(d_{HI}\), \(d_{IHC}\), or \(d_{HC}\).
\end{rmk}

\subsubsection{Composability of homotopy interleavings}
In this short section, we give a simplified proof of a generalization of the fact that homotopy interleavings
can be composed, originally proved in \cite[Section~4]{lb}.
This is generalized further in \cite[Theorem~4.1.4]{s}.

\begin{lem}
    \label{pullback-of-interleaving}
    Let \(C\) admit pullbacks. Fix \(m \geq 1 \in \N\), objects \(X, Y, B \colon \RR^m \to C\),
    elements \(\epsilon,\delta \geq 0 \in \RR^m\), an \((\epsilon,\delta)\)-interleaving
    \(\interleaving{f}{g}{X}{Y}{\epsilon}{\delta}\), and a map \(h \colon B \to Y\).
    The pullback of \(f \colon X \to Y^\epsilon\) along \(h^\epsilon \colon B^\epsilon \to Y^\epsilon\),
    denoted \(k \colon A \to B^\epsilon\), is part of an \((\epsilon,\delta)\)-interleaving
    \(\interleaving{k}{l}{A}{B}{\epsilon}{\delta}\).
\end{lem}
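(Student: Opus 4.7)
The plan is to construct the required $\delta$-morphism $l \colon B \to A^\delta$ via the universal property of the pullback defining $A^\delta$, and then verify the two interleaving identities, one being immediate from the construction and the other requiring a second application of the universal property, this time of the pullback $A^{\epsilon+\delta}$.

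For the setup, denote by $p \colon A \to X$ the second projection of the pullback defining $A$, so that the pullback square reads $h^\epsilon \circ k = f \circ p$. Shift functors on $C^{\RR^m}$ are levelwise evaluations and thus preserve pullbacks; consequently $A^\delta$ is the pullback of $f^\delta \colon X^\delta \to Y^{\epsilon+\delta}$ along $h^{\epsilon+\delta} \colon B^{\epsilon+\delta} \to Y^{\epsilon+\delta}$, with projections $p^\delta$ and $k^\delta$. I then define $l \colon B \to A^\delta$ by applying the universal property to the cone with legs $g \circ h \colon B \to X^\delta$ and $\shi_{0,\epsilon+\delta}(\id_B) \colon B \to B^{\epsilon+\delta}$. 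The required cone condition $f^\delta \circ g \circ h = h^{\epsilon+\delta} \circ \shi_{0,\epsilon+\delta}(\id_B)$ follows from the interleaving identity $f^\delta \circ g = \shi_{0,\epsilon+\delta}(\id_Y)$ together with naturality of $h$ with respect to shifts. By construction, $l$ is characterized by $p^\delta \circ l = g \circ h$ and $k^\delta \circ l = \shi_{0,\epsilon+\delta}(\id_B)$; the second of these is already the first of the two interleaving identities required of $(k, l)$.

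The remaining identity $l^\epsilon \circ k = \shi_{0,\epsilon+\delta}(\id_A)$ is the main, and essentially only, nontrivial step. Both sides are morphisms $A \to A^{\epsilon+\delta}$, and $A^{\epsilon+\delta}$ is itself a pullback with projections $p^{\epsilon+\delta}$ and $k^{\epsilon+\delta}$, so it suffices to check agreement after each projection. For $k^{\epsilon+\delta}$, both sides reduce to the composite of $k$ with the structure morphism $B^\epsilon \to B^{2\epsilon+\delta}$, using $k^\delta \circ l = \shi_{0,\epsilon+\delta}(\id_B)$ on the one side and naturality of $k$ on the other. For $p^{\epsilon+\delta}$, one computes
\[
p^{\epsilon+\delta} \circ l^\epsilon \circ k = (p^\delta \circ l)^\epsilon \circ k = g^\epsilon \circ h^\epsilon \circ k = g^\epsilon \circ f \circ p = \shi_{0,\epsilon+\delta}(\id_X) \circ p,
\]
using the defining properties of $l$, the pullback square for $A$, and the other interleaving identity $g^\epsilon \circ f = \shi_{0,\epsilon+\delta}(\id_X)$; this equals $p^{\epsilon+\delta} \circ \shi_{0,\epsilon+\delta}(\id_A)$ by naturality of $p$. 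The main obstacle is nothing conceptual but the bookkeeping of shifts and indices, which the universal property argument above keeps under control.
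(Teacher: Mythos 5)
Your proof is correct and follows essentially the same route as the paper: construct $l$ via the universal property of the (shifted) pullback using the interleaving identity $f^\delta \circ g = \shi_{0,\epsilon+\delta}(\id_Y)$, get $k^\delta \circ l = \shi_{0,\epsilon+\delta}(\id_B)$ for free, and establish $l^\epsilon \circ k = \shi_{0,\epsilon+\delta}(\id_A)$ by the uniqueness part of the universal property of the shifted pullback square. The only difference is cosmetic bookkeeping (you work with left shifts and the pullback $A^{\epsilon+\delta}$, the paper phrases the same step via right shifts $\delta \cdot B$ and applying $(-)^\delta$ to the square), and you spell out the joint-monicity check that the paper leaves implicit.
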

\begin{proof}
    We start by depicting the pullback square in the statement:
\[
    \begin{tikzpicture}
      \matrix (m) [matrix of math nodes,row sep=2em,column sep=2em,minimum width=2em,nodes={text height=1.75ex,text depth=0.25ex}]
        { A & B^\epsilon\\
          X & Y^\epsilon. \\};
        \path[line width=0.75pt, -{>[width=8pt]}]
        (m-1-1) edge node [above] {$k$} (m-1-2)
                edge node [left] {} (m-2-1)
        (m-2-1) edge node [above] {$f$} (m-2-2)
        (m-1-2) edge node [right] {$h^\epsilon$} (m-2-2)
        ;
    \end{tikzpicture}
\]
    Consider the morphisms \(i = \shi_{0,\epsilon+\delta}(\id_B) \colon \delta \cdot B \to B^\epsilon\) and
    \(g\circ (\delta \cdot h) \colon \delta \cdot B \to X\). Since \(f \circ g \circ (\delta \cdot h) = h^\epsilon \circ i\),
    the universal property of \(A\) gives us a map \(l \colon \delta \cdot B \to A\), or equivalently,
    a map \(l \colon B \to A^\delta\).
    By construction, \(k^\delta \circ l = \shi_{0,\epsilon+\delta}(\id_B) \colon B \to B^{\epsilon+\delta}\).
    To prove that \(l^\epsilon \circ k = \shi_{0,\epsilon+\delta}(\id_A) \colon A \to A^{\epsilon+\delta}\),
    or equivalently that \(l^\epsilon \circ k = \shi_{0,\epsilon+\delta}(\id_A) \colon \epsilon \cdot A \to A^\delta\),
    apply the functor
    \((-)^\delta \colon C^{\RR^m} \to C^{\RR^m}\) to the pullback square above, and use the uniqueness part of its universal property.
\end{proof}

\begin{prop}[cf. {\cite[Section~4]{lb}}]
    \label{triangle-inequality}
    Let \(\M\) be cofibrantly generated, fix \(m \geq 1\), let \(X,Y,Z \colon \RR^m \to \M\), and let \(\epsilon_1,\epsilon_2,\delta_1,\delta_2 \geq 0 \in \RR^m\).
    If \(X\) and \(Y\) are \((\epsilon_1,\epsilon_2)\)-homotopy interleaved and \(Y\) and \(Z\) are \((\delta_1,\delta_2)\)-homotopy interleaved,
    then \(X\) and \(Z\) are \((\epsilon_1+\delta_1,\epsilon_2+\delta_2)\)-homotopy interleaved.
\end{prop}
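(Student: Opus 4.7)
The plan is to first replace the two representatives $Y'$ and $Y''$ of $Y$ by a common object $W$ mapping by trivial fibrations to each, then use \cref{pullback-of-interleaving} to pull both strict interleavings back to $W$, and finally compose the resulting strict interleavings along $W$.

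First I would fix weak equivalences $X\simeq X'$, $Y\simeq Y'$, $Y\simeq Y''$, $Z\simeq Z'$ together with an $(\epsilon_1,\epsilon_2)$-interleaving $\interleaving{f}{g}{X'}{Y'}{\epsilon_1}{\epsilon_2}$ and a $(\delta_1,\delta_2)$-interleaving $\interleaving{f'}{g'}{Y''}{Z'}{\delta_1}{\delta_2}$ witnessing the two hypothesized homotopy interleavings. Since $Y'\simeq Y''$ and the projective model structure on $\M^{\RR^m}$ is cofibrantly generated (so admits functorial factorizations), a standard model-categorical manipulation of a zig-zag of weak equivalences between $Y'$ and $Y''$ produces an object $W\in\M^{\RR^m}$ together with trivial fibrations $p\colon W\to Y'$ and $q\colon W\to Y''$.

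Next I would apply \cref{pullback-of-interleaving} to the interleaving $(f,g)$ along $p$: because shifts in the projective model structure preserve pointwise trivial fibrations, $p^{\epsilon_1}$ is a trivial fibration, so pulling $f$ back along $p^{\epsilon_1}$ yields an object $A$ with a trivial fibration $A\to X'$ (so $A\simeq X$) together with an $(\epsilon_1,\epsilon_2)$-interleaving between $A$ and $W$. Symmetrically, pulling the interleaving $(f',g')$ back along $q$ gives an object $C$ with $C\simeq Z$ together with a $(\delta_1,\delta_2)$-interleaving between $W$ and $C$. Composing these two strict interleavings produces an $(\epsilon_1+\delta_1,\epsilon_2+\delta_2)$-interleaving between $A$ and $C$, which, since $A\simeq X$ and $C\simeq Z$, is precisely the desired $(\epsilon_1+\delta_1,\epsilon_2+\delta_2)$-homotopy interleaving between $X$ and $Z$.

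The main technical hurdle is the construction of the common replacement $W$: the hypothesis only provides a zig-zag of weak equivalences between $Y'$ and $Y''$, and one has to iteratively apply functorial factorizations in the projective model structure to reduce this zig-zag to a span of trivial fibrations. Once $W$ is in hand, the rest of the argument is a direct application of \cref{pullback-of-interleaving} combined with the compositionality of strict interleavings noted earlier.
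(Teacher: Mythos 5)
Your overall route is the same as the paper's: pull both strict interleavings back to a common replacement of \(Y\) via \cref{pullback-of-interleaving} and then compose. However, there is a genuine gap at the construction of \(W\). From \(Y'\simeq Y''\) alone --- even with functorial factorizations available --- one cannot in general produce trivial fibrations \(W\to Y'\) and \(W\to Y''\); this requires \(Y'\) and \(Y''\) to be fibrant. Concretely, in \(\sSet\) (or for constant persistent objects valued in \(\sSet\), which could perfectly well be the \(Y'\) and \(Y''\) handed to you by the hypothesis) take \(Y'=\Delta^0\) and \(Y''=\Delta^1\): a trivial fibration \(W\to\Delta^0\) forces \(W\) to be a contractible Kan complex, while a trivial fibration \(W\to\Delta^1\) admits a section (every simplicial set is cofibrant), exhibiting \(\Delta^1\) as a retract of \(W\) and hence as a Kan complex, a contradiction. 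So the ``standard model-categorical manipulation of the zig-zag'' you invoke does not exist at this level of generality, and your argument would already fail for such inputs.

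The missing step is exactly what the paper does before producing the span: apply a functorial fibrant replacement of \(\M\) pointwise to get a functorial fibrant replacement of \(\M^{\RR^m}\) that commutes with the shift functors \((-)^\delta\), hence preserves both weak equivalences and interleavings. After replacing \(X',Y',Y'',Z'\) by their fibrant replacements one may assume \(Y'\) and \(Y''\) are fibrant, and only then does the general fact hold that two weakly equivalent fibrant objects admit a span of trivial fibrations (e.g.\ via a common cofibrant replacement and Ken Brown--type factorizations). With that step inserted, the remainder of your argument --- pulling both interleavings back to \(W\), using stability of trivial fibrations under shift and pullback to get \(A\simeq X\) and \(C\simeq Z\), and composing the strict interleavings --- coincides with the paper's proof of \cref{triangle-inequality}.
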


\begin{proof}
    Given interleavings \(\interleavingwo{X'}{Y'}{\epsilon_1}{\epsilon_2}\)
    and \(\interleavingwo{Y''}{Z'}{\delta_1}{\delta_2}\) with \(X \simeq X'\), \(Y' \simeq Y \simeq Y'\), and
    \(Z' \simeq Z\), we must construct an interleaving \(\interleavingwo{X''}{Z''}{\epsilon_1+\delta_1}{\epsilon_2+\delta_2}\)
    with \(X'' \simeq X\) and \(Z'' \simeq Z\).

    Since \(\M\) is cofibrantly generated, the projective model structure on \(\M^{\RR^m}\) exists, and, by applying a
    functorial fibrant replacement \(\M \to \M\) pointwise, we get a functorial fibrant replacement \(\M^{\RR^m} \to \M^{\RR^m}\).
    By construction, the fibrant replacement \(\M^{\RR^m} \to \M^{\RR^m}\) commutes with \((-)^\delta \colon \M^{\RR^m} \to \M^{\RR^m}\)
    so, in particular, it preserves interleavings.
    With this in mind, we can assume that \(Y'\) and \(Y''\) are fibrant,
    which implies, and this is a general fact, that we have \(C \in \M^{\RR^m}\) and trivial fibrations \(C \to Y'\) and \(C \to Y''\).
    Using \cref{pullback-of-interleaving}, we can pull back the interleavings we were given along the trivial fibrations,
    as follows:
    \[
    \begin{tikzpicture}
      \matrix (m) [matrix of math nodes,row sep=1em,column sep=3.15em,minimum width=1em,nodes={text height=1.75ex,text depth=0.25ex}]
        { & X'' & C & Z'' & \\
          X' & Y' & & Y'' & Z'. \\};
      \path[line width=0.75pt, -{>[width=8pt]}]
        (m-1-3) edge node [right] {} (m-2-2)
        (m-1-3) edge node [left] {} (m-2-4)
        (m-1-2) edge [dotted] node [above] {} (m-2-1)
        (m-1-4) edge [dotted] node [above] {} (m-2-5)
        (m-1-2) edge [dotted] node [below] {} node [at end, above] {$\leng{\epsilon_1}$}(m-1-3)
        (m-1-3) edge [dotted] node [above] {} node [at end, above] {$\leng{\epsilon_2}$}(m-1-2)
        (m-2-1) edge node [below] {} node [at end, below] {$\leng{\epsilon_1}$}(m-2-2)
        (m-2-2) edge node [above] {} node [at end, below] {$\leng{\epsilon_2}$}(m-2-1)
        (m-1-3) edge [dotted] node [below] {} node [at end, above] {$\leng{\delta_1}$}(m-1-4)
        (m-1-4) edge [dotted] node [above] {} node [at end, above] {$\leng{\delta_2}$}(m-1-3)
        (m-2-4) edge node [below] {} node [at end, below] {$\leng{\delta_1}$}(m-2-5)
        (m-2-5) edge node [above] {} node [at end, below] {$\leng{\delta_2}$}(m-2-4)
        ;
    \end{tikzpicture}
    \]
    Since trivial fibrations are stable under pullback, we have that \(X'' \simeq X\) and \(Z'' \simeq Z\),
    and since interleavings compose, we have that \(X''\) and \(Z''\) are \((\epsilon_1+\delta_1,\epsilon_2+\delta_2)\)-interleaved, as required.
\end{proof}

We remark that the idea of using pullbacks to prove a triangle inequality appears in \cite{Me}.

\section{Interleavings in \(\M^\RR\) and in \(\Ho(\M)^\RR\)}
\label{section-strict-interleavings}

This section is concerned with the rectification of homotopy commutative
interleavings into homotopy interleavings.
In \cref{upper-bound-section}, we prove \cref{strictification-step},
which allows one to construct, for any $c > 2$, a \(c\delta\)-homotopy
interleaving out of a \(\delta\)-homotopy commutative interleaving, when working with \(1\)-persistent objects
of any cofibrantly generated model category \(\M\).
We think of this result as giving a multiplicative upper bound of \(2\) for this rectification.
In \cref{lower-bound-section}, we give a multiplicative lower bound of \(3/2\) for the rectification, 
when \(\M\) is the category of spaces.
In \cref{impossibility}, we show that \cref{strictification-step} has no analogue for multi-persistent spaces.

\subsection{Upper bound}
\label{upper-bound-section}

Let \(\ZZ \subseteq \RR\) denote the posets of integers and real numbers respectively.
The inclusion \(i \colon \ZZ \to \RR\) induces a restriction functor \(i^* \colon C^\RR \to C^\ZZ\) for any category \(C\).
Given \(A \colon \ZZ \to C\), let \(i_*(A) \colon \RR \to C\) be given by \(A\) precomposed with
the functor \(\floor{-} \colon \RR \to \ZZ\), where \(\floor{r}\) is the largest integer bounded above by \(r\).
Note that, given \(m \geq 0 \in \ZZ\), one has a notion of \(m\)-interleaving
between functors \(A, B \colon \ZZ \to C\), and that \(i_* \colon C^\ZZ \to C^\RR\) preserves these interleavings.

We start with a few simplifications. Given \(\delta > 0\), let \(M_\delta \colon \RR \to \RR\) be given by \(M_\delta(r) = \delta \times r\).
The following lemma allows us to work with integer-valued interleavings instead of \(\delta\)-interleavings, and its
proof is immediate.

\begin{lem}
    \label{delta-to-one}
    Let \(\delta > 0 \in \RR\) and let \(m \geq 1 \in \ZZ\).
    Then \(X,Y \in C^\RR\) are \(\delta\)-interleaved if and only if
    \((M_{\delta/m})^*(X)\) and \((M_{\delta/m})^*(Y)\) are \(m\)-interleaved.\qed
\end{lem}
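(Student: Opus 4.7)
The plan is to reduce the lemma to the tautological observation that $M_{\delta/m} \colon \RR \to \RR$ is an order-automorphism of the poset $\RR$, whose inverse is $M_{m/\delta}$. The key computation I would carry out first is the identification of shifts under precomposition: for any $X \colon \RR \to C$,
\[
    (M_{\delta/m})^*(X^\delta)(r) = X^\delta((\delta/m)r) = X((\delta/m)r + \delta) = X((\delta/m)(r+m)) = \bigl((M_{\delta/m})^*X\bigr)^m(r),
\]
and a parallel check on structure morphisms shows $(M_{\delta/m})^*(X^\delta) = \bigl((M_{\delta/m})^*X\bigr)^m$ as functors $\RR \to C$. In other words, precomposition with $M_{\delta/m}$ intertwines the $\delta$-shift functor on $C^\RR$ with the $m$-shift functor.

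Given this identification, the forward direction is essentially automatic. A $\delta$-interleaving $\interleaving{f}{g}{X}{Y}{\delta}{\delta}$ consists of natural transformations $f \colon X \to Y^\delta$ and $g \colon Y \to X^\delta$ together with two coherence equations. Applying $(M_{\delta/m})^*$ componentwise produces natural transformations into $\bigl((M_{\delta/m})^*Y\bigr)^m$ and $\bigl((M_{\delta/m})^*X\bigr)^m$ respectively, that is, $m$-morphisms between the restricted diagrams. The two coherence identities transport verbatim because $(M_{\delta/m})^*$ is a functor on $C^\RR$ and hence preserves composition, identities, and the shifts $\shi_{0,2\delta}(\id)$ defining the coherence (these shifts are themselves assembled from structure morphisms of $X$ and $Y$).

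The converse follows by symmetry: since $M_{m/\delta} \circ M_{\delta/m} = \id_\RR$, the functor $(M_{m/\delta})^*$ is a two-sided inverse of $(M_{\delta/m})^*$, and the same argument applied to this inverse sends an $m$-interleaving between $(M_{\delta/m})^*X$ and $(M_{\delta/m})^*Y$ back to a $\delta$-interleaving between $X$ and $Y$. There is no genuine obstacle; the only care required is notational, in matching up shifts on the two sides, and the claim of immediacy in the statement is justified by the shift-intertwining identity above.
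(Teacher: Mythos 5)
Your proof is correct, and it is exactly the argument the paper has in mind: the lemma is stated with its proof omitted as immediate, and the intended justification is precisely that $M_{\delta/m}$ is an order-isomorphism of $\RR$ whose precomposition intertwines the $\delta$-shift with the $m$-shift, so interleavings transport both ways via the inverse $(M_{m/\delta})^*$. Your write-up just makes these routine checks explicit.
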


The following lemma allows us to work with \(\ZZ\)-indexed persistent objects instead of \(\RR\)-indexed ones.
Here, by homotopy interleaving between \(\ZZ\)-indexed functors we mean the obvious adaptation of the notion
of homotopy interleaving to \(\ZZ\)-indexed functors with values in a model category.

\begin{lem}
    \label{from-R-to-Z}
    Let \(\M\) be cofibrantly generated.
    Let \(X,Y \in \M^\RR\) and let \(m \geq 1 \in \ZZ\). If \(i^*(X),i^*(Y) \in \M^{\ZZ}\) are \(m\)-homotopy interleaved,
    then \(X\) and \(Y\) are \((m+2)\)-homotopy interleaved.
\end{lem}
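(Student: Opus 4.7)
My plan is to use the pointwise extension functor $i_\ast\colon \M^\ZZ \to \M^\RR$ defined by precomposition with $\floor{-}\colon \RR \to \ZZ$, and to build a strict $1$-interleaving between $Z$ and $i_\ast(i^\ast(Z))$ for any $Z \in \M^\RR$. Combined with the triangle inequality (\cref{triangle-inequality}) and the fact that $i_\ast$ preserves both weak equivalences and interleavings, this yields the claimed $(m+2)$-homotopy interleaving.

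First I would check that for every $Z \in \M^\RR$, the objects $Z$ and $i_\ast(i^\ast(Z))$ are strictly $1$-interleaved. Recall $i_\ast(i^\ast(Z))(r) = Z(\floor{r})$. Define $f\colon Z \to_1 i_\ast(i^\ast(Z))$ by $f_r = \phi^Z_{r,\floor{r+1}}$ (available since $r \leq \floor{r+1}$), and $g\colon i_\ast(i^\ast(Z)) \to_1 Z$ by $g_r = \phi^Z_{\floor{r},r+1}$ (available since $\floor{r} \leq r+1$). Both composites $g^1 \circ f$ and $f^1 \circ g$ are then forced, by the functoriality of $Z$, to agree with the structure morphisms $\phi^Z_{r,r+2}$ and $\phi^Z_{\floor{r},\floor{r+2}}$, respectively, giving a $(1,1)$-interleaving on the nose with no homotopy involved.

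Next I would observe that $i_\ast$ preserves interleavings in a strict sense: shifts commute with precomposition by $\floor{-}$ because $\floor{r+m} = \floor{r}+m$ for integer $m$, so an $m$-interleaving in $\M^\ZZ$ restricts via $i_\ast$ to an $m$-interleaving in $\M^\RR$. Moreover, $i_\ast$ is defined pointwise on weak equivalences, hence preserves them, so it also takes $m$-homotopy interleavings to $m$-homotopy interleavings. Starting from an $m$-homotopy interleaving between $i^\ast(X)$ and $i^\ast(Y)$ in $\M^\ZZ$, applying $i_\ast$ yields an $m$-homotopy interleaving between $i_\ast(i^\ast(X))$ and $i_\ast(i^\ast(Y))$ in $\M^\RR$.

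Finally, I would concatenate via \cref{triangle-inequality}: the chain
\[
X \;\leftrightarrow_1\; i_\ast(i^\ast(X)) \;\leftrightarrow_m\; i_\ast(i^\ast(Y)) \;\leftrightarrow_1\; Y
\]
(where the outer interleavings are strict and the middle one is a homotopy interleaving) produces an $(m+2)$-homotopy interleaving between $X$ and $Y$. I do not expect any real obstacle here; the only subtle point is verifying that the strict $1$-interleaving between $Z$ and $i_\ast(i^\ast(Z))$ really is natural in $r$ and really gives a $(1,1)$-interleaving, and that $i_\ast$ is genuinely compatible with the shift functors $(-)^k$ for $k \in \ZZ$ on both sides.
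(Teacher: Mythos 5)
Your proposal is correct and follows essentially the same route as the paper: a strict $1$-interleaving between $Z$ and $i_*(i^*(Z))$ coming from the inequalities $r-1 \leq \floor{r} \leq r \leq \floor{r}+1$, the observation that $i_*$ commutes with integer shifts and preserves weak equivalences (hence homotopy interleavings), and composition of the resulting chain. The only cosmetic difference is that you compose entirely at the $\RR$-indexed level via \cref{triangle-inequality} as stated, whereas the paper phrases the last step as a straightforward adaptation of that proposition to $\ZZ$-indexed functors; both are valid.
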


\begin{proof}
    Note that \(X\) is \(1\)-interleaved with \(i_*(i^*(X))\), as, 
    for all \(r \in \RR\), we have \(r - 1 \leq \floor{r} \leq r \leq \floor{r} + 1\).
    Since \(i_*\) preserves interleavings and weak equivalences,
    it is enough to show that homotopy interleavings between \(\ZZ\)-indexed functors with values in a cofibrantly generated model category compose, which is a straightforward adaptation of \cref{triangle-inequality} to \(\ZZ\)-indexed functors.
\end{proof}

The next straightforward lemma gives us a special replacement of an object of the category \(\M^\ZZ\), with
\(\M\) a model category, that will be useful when lifting structure from \(\Ho(\M)^\ZZ\)
to \(\M^\ZZ\). 

\begin{lem}
\label{l:nice repl}
Given a model category \(\M\) and \(X \in \M^{\ZZ}\), there exists \(\overline{X} \in \M^{\ZZ}\) and a weak equivalence \(\overline{X}\to X\), such that the following properties are satisfied:
\begin{itemize}
    \item \(\overline{X}(i)\) is cofibrant in \(\M\) for every \(i \in \NN\);
    \item for every \(i\geq 0\), the structure morphism \(f_i \colon \overline{X}(i) \to \overline{X}(i+1)\) is a cofibration in \(\M\).
\end{itemize}
Dually, we can replace \(Y\in\M^{\ZZ}\) by a pointwise fibrant $\overline{Y}$ whose ``negative'' maps are fibrations.
\end{lem}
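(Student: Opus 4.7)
The plan is to construct $\overline{X}$ by combining a pointwise functorial cofibrant replacement on the non-positive integers with an inductive factorization on the non-negative integers, using the functorial factorizations provided by the model structure on $\M$. Fix a functorial cofibrant replacement $Q \colon \M \to \M$ together with a natural transformation $q \colon Q \Rightarrow \id_\M$ whose components $q_A \colon Q(A) \to A$ are trivial fibrations.

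For $i \leq 0$, I set $\overline{X}(i) := Q(X(i))$, with structure morphisms obtained by applying $Q$ to the structure morphisms of $X$, and with pointwise weak equivalence given by the components of $q$; naturality of $q$ ensures that the relevant squares commute. For $i \geq 1$, I then define $\overline{X}(i)$ inductively, starting from $\overline{X}(0) = Q(X(0))$, which is cofibrant. Assuming $\overline{X}(i-1)$ is cofibrant and comes equipped with a weak equivalence $q_{i-1} \colon \overline{X}(i-1) \to X(i-1)$, apply the functorial (cofibration, trivial fibration)-factorization to the composite $\overline{X}(i-1) \to X(i-1) \to X(i)$, obtaining
\[
\overline{X}(i-1) \xrightarrow{f_{i-1}} \overline{X}(i) \xrightarrow{q_i} X(i),
\]
with $f_{i-1}$ a cofibration and $q_i$ a trivial fibration. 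Since cofibrations compose and $\overline{X}(0)$ is cofibrant, $\overline{X}(i)$ is cofibrant for every $i \geq 0$; moreover each $q_i$ is in particular a weak equivalence, so assembling the $q_i$ provides the desired pointwise weak equivalence $\overline{X} \to X$. The only remaining compatibility to verify is that the square at the boundary $i = -1 \to i = 0$ commutes, which follows from naturality of $q$ applied to the structure map $X(-1) \to X(0)$.

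The dual statement is proved by the formally dual argument applied in $\M^{\op}$: one takes a functorial fibrant replacement on the relevant half of $\ZZ$ and then inductively factors the appropriate composites as trivial cofibrations followed by fibrations. The main obstacle in this type of construction would typically be producing a pointwise weak equivalence $\overline{X} \to X$ while simultaneously controlling the cofibrancy of the transition maps across all of $\ZZ$; however, since the lemma only requires the strengthened cofibrancy condition on one half, the construction is essentially mechanical once functorial factorizations are in hand, and nothing but naturality is needed to glue the two halves together.
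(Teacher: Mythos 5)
Your construction is correct: the paper states this lemma without proof (calling it straightforward), and your argument --- a functorial cofibrant replacement on the non-positive indices glued, via naturality of \(q\), to an inductive (cofibration, trivial fibration)-factorization of the composites \(\overline{X}(i-1)\to X(i-1)\to X(i)\) on the positive side --- is exactly the standard argument being left implicit. The verification steps you give (commutativity of the squares by naturality on one half and by construction on the other, cofibrancy of each \(\overline{X}(i)\) for \(i\geq 0\) via composition of cofibrations, and the formally dual construction for the fibrant case) are all sound, so nothing is missing.
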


The following lemma will allow us to lift interleavings
in \(\Ho(\M)^\ZZ\) to homotopy interleavings in \(\M^\ZZ\).

\begin{lem}
    \label{h-is-smothering}
    Let \(\M\) be a model category.
    The functor \(\h : \Ho(\M^\ZZ) \to \Ho(\M)^\ZZ\) is essentially surjective, conservative, and full.
    In particular, if \(A,B \in \M^\ZZ\) become isomorphic in \(\Ho(\M)^\ZZ\), then they are weakly equivalent.
\end{lem}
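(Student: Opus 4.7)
The plan is to verify the three properties (essential surjectivity, fullness, and conservativity) separately, and then derive the final sentence from fullness and conservativity. I expect fullness to be the main obstacle; the other two are comparatively direct.

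Conservativity is quickest: a morphism \([f] \colon A \to B\) in \(\Ho(\M^\ZZ)\) can be represented by a morphism \(f \colon A' \to B'\) in \(\M^\ZZ\) between cofibrant-fibrant replacements, and its image under \(\h\) is the levelwise collection of homotopy classes \([f_n]\); this natural transformation is invertible in \(\Ho(\M)^\ZZ\) exactly when each \(f_n\) is a weak equivalence in \(\M\). Since weak equivalences in the projective model structure on \(\M^\ZZ\) are detected pointwise, \(f\) is then itself a weak equivalence and \([f]\) is an isomorphism. For essential surjectivity, given \(F \colon \ZZ \to \Ho(\M)\) I would pick, for each \(n \in \ZZ\), a cofibrant-fibrant representative \(X(n) \in \M\) of the isomorphism class \(F(n)\); between such objects every morphism in \(\Ho(\M)\) is represented by a morphism in \(\M\), so each transition \(F(n) \to F(n+1)\) lifts to a morphism \(X(n) \to X(n+1)\) in \(\M\). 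Because \(\ZZ\) is totally ordered and every morphism there is a unique composite of consecutive ones, this data assembles into a functor \(X \colon \ZZ \to \M\) with no further coherence to verify, and by construction \(\h X \cong F\) in \(\Ho(\M)^\ZZ\).

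The bulk of the work is fullness. Given a natural transformation \(\alpha \colon \h X \to \h Y\) in \(\Ho(\M)^\ZZ\), I would first apply \cref{l:nice repl} to replace \(X\) by a cofibrant \(\overline{X}\) whose structure maps are cofibrations, and dually replace \(Y\) by a pointwise fibrant \(\overline{Y}\) whose structure maps are fibrations. For each \(n\), choose a morphism \(f_n \colon \overline{X}(n) \to \overline{Y}(n)\) in \(\M\) representing the homotopy class \(\alpha_n\). Naturality of \(\alpha\) in \(\Ho(\M)^\ZZ\) provides homotopies, but not equalities, between the two composites \(\overline{X}(n) \to \overline{Y}(n+1)\); the rectification then proceeds by induction in both directions starting from \(n = 0\). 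In the positive direction, with \(f_n\) already fixed, the cofibration \(\overline{X}(n) \to \overline{X}(n+1)\) together with fibrancy of \(\overline{Y}(n+1)\) lets us apply the homotopy extension property (i.e.\ lifting the cofibration against the trivial fibration \(\overline{Y}(n+1)^{I} \to \overline{Y}(n+1) \times \overline{Y}(n+1)\)) to replace \(f_{n+1}\) by a homotopic \(f_{n+1}'\) that makes the square commute strictly. In the negative direction we dually use the homotopy lifting property along the fibration \(\overline{Y}(n-1) \to \overline{Y}(n)\), together with cofibrancy of \(\overline{X}(n-1)\), to adjust \(f_{n-1}\) within its homotopy class. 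The crucial observation is that only the ``new'' component is modified at each step, so previously rectified squares remain strictly commutative; the hard part is precisely to ensure that these one-sided modifications can always be carried out and are self-contained.

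The final statement then follows immediately: given an isomorphism \(\h A \cong \h B\) in \(\Ho(\M)^\ZZ\), fullness produces a morphism \([f] \colon A \to B\) in \(\Ho(\M^\ZZ)\) with \(\h[f]\) invertible, and conservativity forces \([f]\) itself to be an isomorphism, so \(A\) and \(B\) are weakly equivalent in \(\M^\ZZ\).
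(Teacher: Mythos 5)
Your proposal is correct and follows essentially the same route as the paper: the substantive part, fullness, is handled exactly as in the paper's proof, by taking the replacement of \cref{l:nice repl} (cofibrations on the source side, fibrations/fibrancy on the target side), choosing levelwise representatives, and rectifying the homotopy-commutative squares one at a time inductively in both directions from \(0\), modifying only the new component at each step. Essential surjectivity and conservativity, which the paper dismisses as clear, are verified by you with the same standard arguments, and the final statement is deduced from fullness plus conservativity just as intended.
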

\begin{proof}
It is clear that the functor is essentially surjective and full, so we only prove the last property.
Assume given \(X,Y \in \Ho(\M^{\ZZ})\) together with a map \(f\colon \h X \to \h Y\). Thanks to \cref{l:nice repl}, we can assume that \(X\) (respectively \(Y\)) is pointwise cofibrant (respectively fibrant) in \(\M\), and that all the non-negative (respectively negative) structural maps in \(X\) (respectively \(Y\)) are cofibrations (respectively fibrations). The map \(f\) can therefore be represented as a family \(\{[f_i]\}_{i\in\ZZ}\) of homotopy classes of maps of \(\M\). We construct a preimage of \(f\) under \(\h\) inductively, starting with a choice of representatives \(f'_i\) for the homotopy classes \([f_i]\). The squares
\[\begin{tikzcd}
X(-1) \ar[d,"f'_{-1}"] \ar[r,"x_{-1}"] & X(0) \ar[d,"f'_{0}"] \ar[r,"x_{0}"] & X(1) \ar[d,"f'_1"]\\
 Y(-1)  \ar[r,"y_{-1}"] & Y(0)  \ar[r,"y_{0}"] & Y(1)  
\end{tikzcd}\]
commute up to homotopy, and since \(x_0\) and \(y_{-1}\) are, respectively, a cofibration and a fibration, we can deform \(f'_1\) and \(f'_{-1}\) into homotopic maps \(f_1\colon X_1\to Y_1\) and \(f_{-1}\colon X_{-1}\to Y_{-1}\), which render the above squares commutative. Inductively, we can iterate this procedure to find the desired preimage of \(f\) under \(\h\).
\end{proof}

The next result is the main rectification step involved in lifting
interleavings in \(\Ho(\M)^\ZZ\) to homotopy interleavings in \(\M^\ZZ\).

\begin{prop}
    \label{noncoherent-to-strict-zed}
    Let \(\M\) be a model category and let \(A,B \in \M^\ZZ\).
    Let \(m \geq 1 \in \ZZ\).
    If \(\h A\) and \(\h B\) are \(m\)-interleaved in \(\Ho(\M)^\ZZ\), then
    \(A\) and \(B\) are \(2m\)-homotopy interleaved in \(\M^\ZZ\).
\end{prop}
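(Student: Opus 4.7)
My plan is to first lift the given interleaving from $\Ho(\M)^\ZZ$ to strict natural transformations in $\M^\ZZ$ whose interleaving identities hold only up to homotopy, and then rectify this homotopy-commutative data into an honest $2m$-interleaving between weakly equivalent replacements of $A$ and $B$.

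For the lifting step, I would first apply \cref{l:nice repl} to replace $A$ and $B$ by weakly equivalent objects of $\M^\ZZ$ that are pointwise (co)fibrant with (co)fibration structure maps. The fullness of $\h$ from \cref{h-is-smothering}, whose proof is an inductive deformation relying on exactly these (co)fibrancy properties, then yields strict natural transformations $f\colon A\to B^m$ and $g\colon B\to A^m$ in $\M^\ZZ$ realizing the given interleaving morphisms in $\Ho(\M)^\ZZ$. The identities $g^m\circ f = \phi^A_{0,2m}$ and $f^m\circ g = \phi^B_{0,2m}$ now hold only up to homotopy of natural transformations, and a direct naturality computation shows that, after shifting $f$ and $g$ up to scale $2m$, the resulting composites agree with $\phi^A_{0,4m}$ and $\phi^B_{0,4m}$ up to homotopy as well.

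For the rectification step, I would construct weakly equivalent replacements $A'\simeq A$ and $B'\simeq B$ in $\M^\ZZ$ supporting a strict $2m$-interleaving, by splicing in mapping-cylinder-type data built from the chosen homotopies so that the relevant composite of interleaving maps becomes a structural composite on the nose. The doubling of scale from $m$ to $2m$ is what provides the indexing room needed to insert these cylinders coherently along all of $\ZZ$, and the main obstacle is ensuring that the local homotopies produced in the lifting can be packaged into a single $\ZZ$-diagram compatible with all structure maps; here the cofibration/fibration hypotheses of \cref{l:nice repl} are precisely what make the inductive cylinder construction go through. The lower bound of $3/2$ for the rectification of homotopy-commutative interleavings discussed elsewhere in the paper confirms that some loss of this form is unavoidable.
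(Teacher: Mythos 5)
Your first step delivers less than you claim, and your second step assumes away exactly the difficulty this proposition is about. Since \(\h \colon \Ho(\M^\ZZ) \to \Ho(\M)^\ZZ\) is full but not faithful, lifting the two interleaving morphisms along \(\h\) (after the replacements of \cref{l:nice repl}) gives strict natural transformations \(f \colon A \to B^m\) and \(g \colon B \to A^m\) for which \(g^m \circ f\) and \(\phi^A_{0,2m}\) agree only \emph{levelwise} up to homotopy: you obtain a homotopy \(h_i\) in each degree \(i \in \ZZ\), with no compatibility between \(h_i\) and \(h_{i+1}\) under the structure maps. So there is no ``homotopy of natural transformations'' available to feed into a mapping-cylinder construction; producing one is itself a coherence/rectification problem, which you acknowledge (``the main obstacle is ensuring that the local homotopies \dots can be packaged into a single \(\ZZ\)-diagram'') but do not solve. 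The (co)fibrancy conditions of \cref{l:nice repl} do not by themselves resolve it, and no argument is given for why passing from \(m\) to \(2m\) creates the needed room for cylinders. Note also that rectifying the interleaving shape itself is genuinely obstructed at comparable scales (this is the point of the Toda-bracket lower bound, \cref{lower-bound-prop}), so any correct argument must exploit either a change of shape or a quantified change of scale in an essential way; your sketch does neither concretely. This is a genuine gap, not a presentational one.

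For contrast, the paper's proof sidesteps coherence entirely by changing the \emph{shape} of the diagram to be rectified. From the \(m\)-interleaving of \(\h A\) and \(\h B\) one forms a single zig-zag object \(C' \in \Ho(\M)^\ZZ\) alternating between values of \(\h A\) (on blocks where \(n \sslash m\) is even) and of \(\h B\) (on blocks where it is odd), with the interleaving morphisms as structure maps; the interleaving identities make this a strictly commutative \(\ZZ\)-diagram in \(\Ho(\M)\), and a \(\ZZ\)-indexed diagram lifts along \(\h\) by choosing representatives one structure map at a time (fullness and conservativity, \cref{h-is-smothering}) — no homotopies between composites ever need to be chosen or made coherent. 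Conservativity then gives \(\even_m^*(A) \simeq \even_m^*(C)\) and \(\odd_m^*(B) \simeq \odd_m^*(C)\), and composing the evident strict interleavings relating \(A\) to \(\even_m^*(A)\), \(\even_m^*(C)\) to \(\odd_m^*(C)\), and \(\odd_m^*(B)\) to \(B\) via \cref{triangle-inequality} yields the \(2m\)-homotopy interleaving; the factor \(2\) arises from these shift comparisons, not from cylinder insertion. To salvage your route you would have to upgrade the levelwise homotopies to genuinely coherent data (essentially a homotopy coherent interleaving), which is substantially harder than the zig-zag argument.
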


\begin{proof}
    We start by giving the proof for the case \(m = 1\), as in this case the main idea is more clear.
    We will use the following constructions.
    Let \(\even \colon \ZZ \to \ZZ\) be the functor that maps even numbers to themselves and an odd number \(n\) to \(n-1\).
    Similarly, let \(\odd \colon \ZZ \to \ZZ\) be the functor that maps odd numbers to themselves
    and an even number \(n\) to \(n-1\).

    Note that, for every \(C \in \M^\ZZ\), we have that \(C\) is \((1,0)\)-interleaved with \(\even^*(C)\) and with
    \(\odd^*(C)\), and that \(\even^*(C)\) and \(\odd^*(C)\) are \(1\)-interleaved.

    Now assume given a \(1\)-interleaving between \(\h A\) and \(\h B\) in \(\Ho(\M)^\ZZ\), that is, assume that
    there are morphisms \(f_i \colon \h A(i) \to \h B(i+1)\) and \(g_i \colon \h B(i) \to \h A(i+1)\) in \(\Ho(\M)\) rendering
    the following diagram commutative:
\[
    \begin{tikzpicture}
      \matrix (m) [matrix of math nodes,row sep=5em,column sep=5em,minimum width=2em,nodes={text height=1.75ex,text depth=0.25ex}]
        { \cdots & \h A(-1) & \h A(0) & \h A(1) & \cdots \\
          \cdots & \h B(-1) & \h B(0) & \h B(1) & \cdots \\};
      \path[line width=0.75pt, -{>[width=8pt]}]
        (m-2-1) edge node [above] {$\,\,[\beta_{-2}]$} (m-2-2)
                edge [bend right=10] node [below left] {$g_{-2}\;\;\;\;\;\;\;\;$} (m-1-2)
        (m-2-2) edge node [above] {$[\beta_{-1}]$} (m-2-3)
                edge [bend right=10] node [below left] {$g_{-1}\;\;\;\;\;\;\;\;$} (m-1-3)
        (m-2-3) edge node [above] {$[\beta_0]$} (m-2-4)
                edge [bend right=10] node [below left] {$g_{0}\;\;\;\;\;\;\;\;$} (m-1-4)
        (m-2-4) edge node [above] {$[\beta_1]$} (m-2-5)
                edge [bend right=10] node [below left] {$g_{1}\;\;\;\;\;\;\;\;$} (m-1-5)
        (m-1-1) edge node [below] {$[\alpha_{-2}]$} (m-1-2)
                edge [bend left=10,-,line width=6pt,draw=white] (m-2-2)
                edge [bend left=10] node [above left] {$f_{-2}\;\;\;\;\;\;\;\;$} (m-2-2)
        (m-1-2) edge node [below] {$[\alpha_{-1}]$} (m-1-3)
                edge [bend left=10,-,line width=6pt,draw=white] (m-2-3)
                edge [bend left=10] node [above left] {$f_{-1}\;\;\;\;\;\;\;\;$} (m-2-3)
        (m-1-3) edge node [below] {$[\alpha_0]$} (m-1-4)
                edge [bend left=10,-,line width=6pt,draw=white] (m-2-4)
                edge [bend left=10] node [above left] {$f_{0}\;\;\;\;\;\;\;\;\;$} (m-2-4)
        (m-1-4) edge node [below] {$[\alpha_1]$} (m-1-5)
                edge [bend left=10,-,line width=6pt,draw=white] (m-2-5)
                edge [bend left=10] node [above left] {$f_{1}\;\;\;\;\;\;\;\;$} (m-2-5)
        ;
    \end{tikzpicture}
\]
    Consider the object \(C' \in \Ho(\M)^\ZZ\) given by one of the two diagonal
    zig-zags of the diagram above, namely, let
    \[
        C' =\;\;\; \cdots \xrightarrow{f_{-2}} \h B(-1) \xrightarrow{g_{-1}} \h A(0) \xrightarrow{f_0} \h B(1) \xrightarrow{g_1} \h A(2) \xrightarrow{f_2} \cdots
    \]
    Using \cref{h-is-smothering}, construct \(C \in \M^\ZZ\) such that \(\h C \cong C'\).

    Now, by construction, we have that \(\h(\even^*(A)) = \even^*(\h A) = \even^*(C') \cong \even^*(\h C) = \h(\even^*(C))\),
    so from \cref{h-is-smothering} it follows that \(\even^*(A) \simeq \even^*(C)\). Similarly, we have \(\odd^*(B) \simeq \odd^*(C)\).
    Since \(A\) is \((1,0)\)-interleaved with \(\even^*(A)\), \(\even^*(C)\) is \(1\)-interleaved with \(\odd^*(C)\),
    and \(\odd^*(B)\) is \((0,1)\)-interleaved with \(B\), \cref{triangle-inequality} implies that
    \(A\) and \(B\) are \(2\)-homotopy interleaved, concluding the proof for the case \(m = 1\).

    The proof for general \(m \geq 1 \in \ZZ\) is analogous, replacing the functor \(\even : \ZZ \to \ZZ\) with \(\even_m : \ZZ \to \ZZ\) given by \(\even_m(n) = \even(n\sslash m) \times m\),
    the functor \(\odd : \ZZ \to \ZZ\) with \(\odd_m : \ZZ \to \ZZ\) given by \(\odd_m(n) = \odd(n\sslash m) \times m\), and \(C' \in \Ho(\M)^\ZZ\) with
    \[
        C'(n) = \begin{cases}
            \h (\even_m^*(A))(n) & \text{ if \(n\sslash m\) is even }\\
            \h (\odd_m^*(B))(n)  & \text{ if \(n\sslash m\) is odd},
        \end{cases}
    \]
    where \(n \sslash m\) denotes the largest integer \(l\) such that \(l \times m \leq n\).
\end{proof}

We are now ready to prove the main result of this section.

\begingroup
\def\thetheorem{\cref{strictification-step}}
\addtocounter{thmx}{-2}
\begin{thmx}
    Let \(\M\) be a cofibrantly generated model category, let \(X, Y \in \M^\RR\),
    and let \(\delta > 0 \in \RR\).
    If \(X\) and \(Y\) are \(\delta\)-homotopy commutative interleaved, then
    they are \(c\delta\)-homotopy interleaved for every \(c > 2\).
\end{thmx}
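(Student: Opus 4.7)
The plan is to combine \cref{delta-to-one}, \cref{noncoherent-to-strict-zed}, and \cref{from-R-to-Z} via a rescaling argument, exploiting the fact that the loss incurred at the discretization step can be made arbitrarily close to the multiplicative factor $2$ coming from \cref{noncoherent-to-strict-zed}. Given $c > 2$, I fix an integer $m \geq 1$ large enough that $(2m+2)/m < c$; explicitly, any $m > 2/(c-2)$ works.

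The key observation is that $(M_{\delta/m})^\ast \colon \M^\RR \to \M^\RR$ is an isomorphism of categories (with inverse $(M_{m/\delta})^\ast$) that preserves pointwise weak equivalences and sends $\epsilon$-interleavings in $\M^\RR$ (respectively in $\Ho(\M)^\RR$) to $(\epsilon m/\delta)$-interleavings. Setting $X_m := (M_{\delta/m})^\ast(X)$ and $Y_m := (M_{\delta/m})^\ast(Y)$, applying \cref{delta-to-one} to $C = \Ho(\M)$ shows that $X_m$ and $Y_m$ are $m$-homotopy commutative interleaved. Dually, if $X_m$ and $Y_m$ were $(2m+2)$-homotopy interleaved in $\M^\RR$, pushing this through $(M_{m/\delta})^\ast$ would yield a $(2m+2)\delta/m$-homotopy interleaving between $X$ and $Y$, which is at most $c\delta$ by the choice of $m$. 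It therefore suffices to prove that $X_m$ and $Y_m$ are $(2m+2)$-homotopy interleaved.

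To this end, I would restrict along the inclusion $i\colon \ZZ \hookrightarrow \RR$. Since $i^\ast$ commutes with both integer shifts and pointwise localization, the identifications $\h(i^\ast(X_m)) = i^\ast(\h(X_m))$ and $\h(i^\ast(Y_m)) = i^\ast(\h(Y_m))$ show that $i^\ast(X_m)$ and $i^\ast(Y_m)$ become $m$-interleaved in $\Ho(\M)^\ZZ$. \cref{noncoherent-to-strict-zed} then upgrades this to a $2m$-homotopy interleaving between $i^\ast(X_m)$ and $i^\ast(Y_m)$ in $\M^\ZZ$, and \cref{from-R-to-Z} finally promotes the latter to a $(2m+2)$-homotopy interleaving between $X_m$ and $Y_m$ in $\M^\RR$, completing the proof.

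I do not expect a serious obstacle in this particular argument: the real rectification content sits in \cref{noncoherent-to-strict-zed}, which is already responsible for the multiplicative factor $2$, while \cref{from-R-to-Z} contributes only an additive error of $+2$ and the $\RR$-to-$\ZZ$ discretization contributes a controlled loss. The role of the scaling step is precisely to absorb both contributions into the arbitrarily chosen slack $c - 2 > 0$, which is why the theorem yields $c\delta$ for every $c > 2$ rather than an equality with $2\delta$ and why the constant is sharp in a multiplicative-only sense (cf.\ the discussion preceding \cref{lower-bound-section}).
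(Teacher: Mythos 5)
Your proposal is correct and follows essentially the same route as the paper's proof: rescale via $(M_{\delta/m})^\ast$ (i.e.\ \cref{delta-to-one}) with $m$ chosen so that $(2m+2)/m \leq c$, restrict along $i^\ast$ to reduce to $\ZZ$-indexed diagrams, apply \cref{noncoherent-to-strict-zed} to get a $2m$-homotopy interleaving, and promote it back with \cref{from-R-to-Z}. The only difference is expository: you spell out explicitly that the rescaling is an isomorphism of categories compatible with weak equivalences and with interleavings in $\Ho(\M)^\RR$, which the paper leaves implicit in its "we may assume" step.
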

\endgroup

\begin{proof}
    Let \(c > 2\) and let \(m \geq 1 \in \ZZ\) be large enough so that \((2m + 2)/m \leq c\).
    By \cref{delta-to-one}, we may assume that \(X,Y \in \M^\RR\) are \(m\)-homotopy commutative interleaved
    and that we must show that they are \(cm\)-homotopy interleaved.
    Since \(2m + 2 \leq mc\), \cref{from-R-to-Z} reduces the problem to showing that
    \(i^*(X)\) and \(i^*(Y)\) are \(2m\)-homotopy interleaved in \(\M^\ZZ\),
    knowing that they are $m$-homotopy commutative interleaved.
    \cref{noncoherent-to-strict-zed} now finishes the proof.
\end{proof}

\subsection{Lower bound}
\label{lower-bound-section}

\cref{strictification-step} implies that we have \(d_{HI} \leq c d_{HC}\) as distances on \(\M^\RR\), for \(c = 2\) and for every cofibrantly generated
model category \(\M\).
One could wonder if the constant \(c = 2\) can be improved.
In this section we show that, when \(\M = \SS\), we have \(c \geq 3/2\).
We do this by characterizing three-object persistent spaces which are \(1\)-homotopy interleaved with a trivial persistent space in
terms of the vanishing of a Toda bracket.
The idea of using Toda brackets to prove that \(d_{HI} \neq d_{HC}\) is suggested in \cite[Example~7.3]{lb}.

The Toda bracket is an operation on composable triples of homotopy classes of pointed maps,
and was originally defined to compute homotopy groups of spheres (\cite{Toda}).
We are interested in the use of Toda brackets as an algebraic obstruction to the rectification of diagrams.
We now describe the fundamental procedure involved in the definition
of Toda brackets, and the few properties that we are interested in
(see, e.g., \cite{BJT}).

Let \(\Sp\) denote the category of pointed spaces.
For concreteness, in the arguments of this section we use \(\SS = \Top\).
Let \([3]\) denote the category freely generated by the graph \(\bullet \to \bullet \to \bullet \to \bullet\).
A diagram \(X \in \Ho(\Sp)^{[3]}\), which is given by 
\(X(0),X(1),X(2),X(3) \in \Ho(\Sp)\) and homotopy classes of pointed maps
\([f_0] \colon X(0) \to X(1)\), \([f_1] \colon X(1) \to X(2)\), and \([f_2] \colon X(2) \to X(3)\),
is a \define{bracket sequence} if \([f_1] \circ [f_0]\) and \([f_2] \circ [f_1]\)
are equal to the null map, that is, to the homotopy class of the constant pointed map.

Let \(X' \in \Ho(\Sp)^{[3]}\) be a bracket sequence and let \(X \in \Sp^{[3]}\) be such
that \(\h X \cong X'\), which exists by \cref{h-is-smothering}.
We can, and do, assume that \(X\) takes values in CW-complexes.
Consider the following diagram of pointed spaces and pointed maps:
\[
    \begin{tikzpicture}
      \matrix (m) [matrix of math nodes,row sep=2em,column sep=1.5em,minimum width=2em,nodes={text height=1.75ex,text depth=0.25ex}]
        { X(0) & X(1) & \ast \\
          \ast & X(2) & X(3). \\};
      \path[line width=0.75pt, -{>[width=8pt]}]
        (m-1-1) edge node [above] {$f_0$} (m-1-2)
                edge node [left] {} (m-2-1)
        (m-2-1) edge (m-2-2)
        (m-1-2) edge node [right] {$f_1$} (m-2-2)
                edge node [left] {} (m-1-3)
        (m-2-2) edge node [above] {$f_2$} (m-2-3)
        (m-1-3) edge node [right] {} (m-2-3)
        ;
    \end{tikzpicture}
\]
Since \(X'\) is a bracket sequence, we know that there exist (pointed) homotopies filling
the squares in the diagram above.
For \(Y\) a pointed space, let \(C Y\) denote its reduced cone.
Each pair of such homotopies gives us pointed maps \(\alpha \colon CX(0) \to X(2)\) and \(\beta \colon CX(1) \to X(3)\)
such that \(\alpha \circ i = f_1 \circ f_0 \colon X(0) \to X(2)\) and \(\beta \circ i = f_2 \circ f_1\),
where \(i\) is the inclusion into the cone.
In particular, we have a commutative square
\begin{equation}\label{square-inducing-toda}
    \begin{tikzpicture}[baseline=(current  bounding  box.center)]
      \matrix (m) [matrix of math nodes,row sep=2em,column sep=2em,minimum width=2em,nodes={text height=1.75ex,text depth=0.25ex}]
        { X(0) & CX(1)\\
          CX(0)& X(3), \\};
      \path[line width=0.75pt, -{>[width=8pt]}]
        (m-1-1) edge node [above] {$i \circ f_0$} (m-1-2)
                edge node [left] {$i$} (m-2-1)
        (m-2-1) edge node [above] {$f_2 \circ \alpha$} (m-2-2)
        (m-1-2) edge node [right] {$\beta$} (m-2-2)
        ;
    \end{tikzpicture}
\end{equation}
which, by noticing that the pushout of the top and left morphisms is a model for the reduced
suspension of \(X(0)\), gives us an element of \([\Sigma X'(0), X'(3)]\),
where \([-,-]\) denotes homotopy classes of pointed maps.

\begin{defn}
    Let \(X \in \Ho(\Sp)^{[3]}\) be a bracket sequence.
    Consider the subset of \([\Sigma X(0), X(3)]\) consisting of
    all elements that can be obtained using
    the procedure above.
    This is the \define{Toda bracket} of \(X\).
    We say that the Toda bracket \define{vanishes} if it contains the null map.
\end{defn}

It is well-known (see, e.g., \cite[Section~1]{BJT}) that the non-vanishing of a Toda bracket is an obstruction
to the rectification of the bracket sequence, in the following sense.

\begin{prop}\label{toda-bracket-main-thm}
    The Toda bracket of a bracket sequence \(X' \in \Ho(\Sp)^{[3]}\)
    vanishes if and only if there exists \(X \in \Sp^{[3]}\)
    with \(\h X \cong X'\) and with \(f_1 \circ f_0\) and \(f_2 \circ f_1\) equal to the null map.\qed
\end{prop}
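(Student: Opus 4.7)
The plan is to prove the two implications separately. The reverse direction is immediate: given $X \in \Sp^{[3]}$ with $\h X \cong X'$ and with $f_1 \circ f_0 = * = f_2 \circ f_1$ strictly, the constant null-homotopies $\alpha \equiv * \colon CX(0) \to X(2)$ and $\beta \equiv * \colon CX(1) \to X(3)$ are valid choices in the construction of the Toda bracket, since $\alpha \circ i = f_1 \circ f_0$ and $\beta \circ i = f_2 \circ f_1$ both hold trivially. With these choices, both $f_2 \circ \alpha$ and $\beta$ are constant, so the map $\Sigma X(0) \to X(3)$ induced by the pushout square~\eqref{square-inducing-toda} is the null map. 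Hence the Toda bracket contains zero.

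For the forward direction, I would fix a lift $X \in \Sp^{[3]}$ of $X'$ (which exists by \cref{h-is-smothering}), assume each $X(i)$ is a CW-complex, and choose null-homotopies $\alpha,\beta$ realizing the zero element of the Toda bracket. The strategy is to strictify the two consecutive compositions in turn. After replacing $f_0$ by its mapping cylinder inclusion, we may assume $f_0$ is a pointed cofibration. The null-homotopy $\alpha$, viewed as a homotopy $X(0) \times I \to X(2)$ from $f_1 \circ f_0$ to the constant map, then extends by the homotopy extension property to a homotopy of $f_1$, producing a map $f_1' \simeq f_1$ with $f_1' \circ f_0 = *$ strictly.

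A parallel argument aims to produce $f_2' \simeq f_2$ with $f_2' \circ f_1' = *$ strictly. The hard part is carrying out this second strictification coherently: the null-homotopy of $f_2 \circ f_1'$ used for the second HEP must be compatible with the first strictification, so that after the necessary mapping cylinder adjustments (factoring $f_1'$ through the strict cofiber $X(1)/f_0(X(0))$ and then as a cofibration into a suitable replacement of $X(2)$) the resulting $f_2'$ still lies in the original homotopy class of $f_2$ while the relation $f_1' \circ f_0 = *$ is preserved. This is precisely where the hypothesis that the Toda bracket contains zero is used: the null-homotopy of $\sigma \colon \Sigma X(0) \to X(3)$ supplies exactly the coherence data needed to select a null-homotopy of $f_2 \circ f_1'$ that is compatible with $\beta$ and the earlier choice, so that the second HEP yields a rectification. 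The detailed bookkeeping, which involves the cofiber sequence of $f_0$ and the associated coaction on $\mathrm{Cone}(f_0) \simeq \Sigma X(0)$, is standard and is carried out in \cite[Section~1]{BJT}.
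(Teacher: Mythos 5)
The paper does not actually prove this proposition: it is stated with a \(\qed\) and justified only by the remark that it is well-known, with a pointer to \cite[Section~1]{BJT}. So there is no in-paper argument to compare against, and your proposal, which is the standard rectification argument, is consistent with what the paper relies on. Your reverse direction is complete and correct: with strictly null composites, the constant homotopies are admissible choices, and the resulting map out of the pushout in \eqref{square-inducing-toda} is constant, so the bracket contains the null class. (One small point you could acknowledge: the paper's procedure assumes the lift \(X\) is CW-valued, whereas the strict lift you are handed need not be; one either allows arbitrary lifts here or invokes the standard independence of the Toda bracket from the choice of lift, which is part of what \cite{BJT} provides.)

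For the forward direction, your outline (mapping-cylinder replacement of \(f_0\), homotopy extension to make \(f_1\circ f_0\) strictly null, then a second rectification of \(f_2\circ f_1'\) controlled by the null-homotopy of the glued map \(\Sigma X(0)\to X(3)\)) is the right strategy, but as written the decisive step is asserted rather than argued: the sentence claiming that the null-homotopy of the bracket element ``supplies exactly the coherence data needed'' is precisely where the whole content of the proposition sits, and you defer it to \cite[Section~1]{BJT}. Since the paper itself cites that reference for the entire statement, this deferral is defensible, but be aware that your write-up proves only the easy implication and sketches the hard one; if a self-contained proof were required, the compatibility bookkeeping via the cofiber sequence of \(f_0\) and the coaction of \(\Sigma X(0)\) on the cone would have to be carried out, not just named.
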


Although Toda brackets are defined for diagrams of pointed spaces, one can extend them
to unpointed spaces, provided the spaces are simply connected.
This is what we do now.
A \define{simply connected space} is a non-empty, connected space whose fundamental groupoid is trivial.
Let \(\Ss\) and \(\Sps\) denote the categories of simply connected spaces and of pointed, simply connected spaces, respectively.
We have the following well-known fact and corollary.
\begin{lem}
    \label{forget-point-equivalence}
    The forgetful functor \(U \colon \Ho(\Sps) \to \Ho(\Ss)\) is an equivalence of categories.\qed
\end{lem}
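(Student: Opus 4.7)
The plan is to verify essential surjectivity and full faithfulness separately. Essential surjectivity is immediate: any object of \(\Ss\) is non-empty (simply connected spaces are non-empty by our convention), so choosing any point turns it into an object of \(\Sps\) mapped by \(U\) to the original space.

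For full faithfulness, I would reduce to working with (pointed) CW complexes, since morphisms in \(\Ho(\Sps)\) and in \(\Ho(\Ss)\) can be represented by genuine homotopy classes of maps between cofibrant-fibrant representatives in the respective model structures. The claim then reduces to the classical statement that for pointed simply connected CW complexes \((X,x_0)\) and \((Y,y_0)\), the forgetful map from pointed homotopy classes \([X,Y]_\ast\) to unpointed homotopy classes \([X,Y]\) is a bijection. Surjectivity follows from the fact that \(Y\) is path-connected: given \(f\colon X \to Y\), pick a path \(\gamma\) in \(Y\) from \(f(x_0)\) to \(y_0\), and use the homotopy extension property of the well-pointed pair \((X,x_0)\) to deform \(f\) along \(\gamma\) to a pointed map homotopic to \(f\).

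For injectivity, the standard argument is that two pointed maps that are freely homotopic via \(H\colon X \times I \to Y\) determine a loop \(H(x_0,-)\) in \(Y\) based at \(y_0\), and the obstruction to rigidifying \(H\) to a pointed homotopy lies in \(\pi_1(Y,y_0)\); since \(Y\) is simply connected this obstruction vanishes, and a null-homotopy of the loop, extended via the cofibration \(X\times\{0,1\}\cup\{x_0\}\times I \hookrightarrow X\times I\), yields the desired pointed homotopy. The main technical point, though mild, is handling these homotopy extension arguments carefully; everything else is formal. An alternative, even cleaner, presentation would be to invoke the standard fact that \(\pi_1(Y,y_0)\) acts on \([X,Y]_\ast\) by basepoint rotation and that the induced map on the quotient \([X,Y]_\ast/\pi_1(Y,y_0) \to [X,Y]\) is a bijection, which specializes immediately to our setting.
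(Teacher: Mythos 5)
Your proof is correct; note that the paper itself offers no argument for this lemma, recording it as a well-known fact, so there is nothing to compare against beyond the standard textbook proof, which is exactly what you give: essential surjectivity by choosing a basepoint in a (non-empty) simply connected space, and full faithfulness via the classical bijection between pointed and free homotopy classes \([X,Y]_\ast \to [X,Y]\) when \(X\) is well-pointed and \(\pi_1(Y)\) acts trivially (in particular when \(Y\) is simply connected). One small point of hygiene: \(\Sps\) and \(\Ss\) are not themselves model categories (they are not closed under the relevant (co)limits), so ``cofibrant-fibrant representatives in the respective model structures'' should be read as follows: interpret \(\Ho(\Sps)\) and \(\Ho(\Ss)\) as the full subcategories of \(\Ho(\Sp)\) and \(\Ho(\SS)\) spanned by the simply connected objects (equivalently, as localizations of \(\Sps\) and \(\Ss\) at the weak equivalences), and use CW approximation, which preserves simple connectivity and, with the basepoint a \(0\)-cell, yields well-pointed objects; between such representatives the hom-sets in the two homotopy categories are indeed \([X,Y]_\ast\) and \([X,Y]\), and your homotopy extension and \(\pi_1\)-action arguments apply verbatim. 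With that reading, both your hands-on HEP argument and the cleaner statement that \([X,Y]_\ast/\pi_1(Y,y_0) \to [X,Y]\) is a bijection complete the proof.
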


\begin{cor}
    \label{toda-bracket-well-defined}
    If \(X \in \Ho(\Sps)^{[3]}\) is such that the composite of consecutive maps of \(U_*(X)\) are
    null-homotopic, then \(X\) is a bracket sequence.
    
    Let \(X,X' \in \Ho(\Sps)^{[3]}\) be such that \(U_*(X) \cong U_*(X')\).
    Then \(X\) is a bracket sequence if and only if \(X'\) is;
    in that case, the Toda bracket of \(X\) vanishes if and only if the Toda bracket of \(X'\) does.
    \qed
\end{cor}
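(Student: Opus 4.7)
The plan is to deduce both claims from \cref{forget-point-equivalence} combined with a careful identification of null pointed maps with null-homotopic unpointed maps between simply connected spaces. Since \(U\colon \Ho(\Sps)\to \Ho(\Ss)\) is an equivalence of categories, postcomposition with \(U\) yields an equivalence of functor categories \(U_*\colon \Ho(\Sps)^{[3]} \to \Ho(\Ss)^{[3]}\); in particular, \(U_*\) is fully faithful and essentially surjective.

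The key observation is that nullity of a pointed morphism can be detected after applying \(U\). Concretely, because every object of \(\Sps\) is path-connected, any two constant unpointed maps \(X(i)\to X(i+1)\) are homotopic, so a morphism \(g\colon U X(i)\to U X(i+1)\) in \(\Ho(\Ss)\) is null-homotopic if and only if it equals the image under \(U\) of the null pointed map. Combined with full faithfulness of \(U\), this shows that a morphism \([f]\) in \(\Ho(\Sps)\) equals the null pointed map exactly when \(U([f])\) is null-homotopic in \(\Ho(\Ss)\). Applying this to the two consecutive composites of \(X\) immediately yields the first assertion of the corollary.

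For the second assertion, \(U_*(X)\cong U_*(X')\) implies \(X\cong X'\) in \(\Ho(\Sps)^{[3]}\) because \(U_*\) is an equivalence, and being a bracket sequence is preserved under isomorphisms in \(\Ho(\Sps)^{[3]}\) since the defining condition is an equality of morphisms in \(\Ho(\Sps)\). For the invariance of the Toda bracket's vanishing, an isomorphism \((\phi_0,\phi_1,\phi_2,\phi_3)\colon X\to X'\) induces a bijection \([\Sigma X(0),X(3)]\to[\Sigma X'(0),X'(3)]\) via \([\sigma]\mapsto[\phi_3]\circ[\sigma]\circ[\Sigma \phi_0]^{-1}\), and the plan is to show that this bijection restricts to a bijection between the respective Toda brackets, from which the claim about vanishing follows since the null map is preserved by it. The main point to verify, and the principal obstacle in the proof, is the naturality of the Toda bracket construction under such isomorphisms; this should reduce to choosing strict pointed representatives of \(\phi_0,\ldots,\phi_3\) and transporting the homotopies filling the two defining squares across the isomorphism, so that no ideas beyond unwinding the construction are required.
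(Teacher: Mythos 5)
Your argument is correct and is essentially the justification the paper has in mind: the corollary is stated there without proof as a direct consequence of \cref{forget-point-equivalence}, and your deduction—full faithfulness of \(U\) plus the observation that all constant maps into a path-connected space are freely homotopic, so a pointed class is null exactly when its image under \(U\) is null-homotopic—fills in precisely that. The one step you flag as delicate, invariance of the Toda bracket under an isomorphism of diagrams, is in fact nearly immediate with the paper's definition: the bracket of a diagram is built from all strict lifts \(Y\) with \(\h Y\) isomorphic to it, and isomorphic diagrams admit the same strict lifts, so the two brackets correspond under your bijection \([\sigma]\mapsto[\phi_3]\circ[\sigma]\circ[\Sigma\phi_0]^{-1}\), which preserves the null class.
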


    \cref{toda-bracket-well-defined} implies that, for \(X \in \Ho(\Ss)^{[3]}\), there is a well-defined notion of
\(X\) being a bracket sequence, namely that any lift \(X' \in \Ho(\Sps)^{[3]}\) is a bracket sequence;
in that case, we say that the Toda bracket of \(X\) vanishes if the Toda bracket of \(X'\) does.

Let \(\j \colon \SS^{[3]} \to \SS^\ZZ\) be given by extending \(X \in \SS^{[3]}\) to the right with the singleton space
and to the left with the empty space.
Let \(\ast \in \SS^{[3]}\) be the constant singleton space.

\begin{prop}\label{characterization-homotopy-interleaved-with-ast}
    Let \(X \in \Ss^{[3]}\), then
    \begin{enumerate}
        \item \label{condition1-hi-trivial} \(\h(\j(X)) \in \Ho(\SS)^{\ZZ}\) is \(1\)-interleaved with \(\h(\j(\ast))\)
            if and only if \(\h X\) is a bracket sequence;
        \item \label{condition2-hi-trivial} if \(\h(\j(X)) \in \Ho(\SS)^{\ZZ}\) is \(1\)-interleaved with \(\h(\j(\ast))\),
            then \(\j(X)\) is \(1\)-homotopy interleaved with \(\j(\ast)\) if and only if
            the Toda bracket of \(\h X\) vanishes.
    \end{enumerate}
\end{prop}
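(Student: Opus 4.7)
My plan is to unravel both claims via explicit computations with the interleaving data and the Toda bracket representative.

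For part (1), the first step is to unpack a $1$-interleaving in $\Ho(\SS)^\ZZ$ between $\h(\j(X))$ and $\h(\j(\ast))$. Since $\h(\j(\ast))(n)$ is terminal for $n\ge 0$ and initial for $n\le -1$, and the unpointed homotopy classes of maps $\ast \to Y$ into a simply connected $Y$ are uniquely determined, all of the interleaving data is forced; only the interleaving identities involving $q_{n+1}\circ p_n$ for $n=0,1$ remain nontrivial, and these reduce to saying that the structure maps $\phi^{\j(X)}_{0,2}$ and $\phi^{\j(X)}_{1,3}$ factor through a point in $\Ho(\SS)$. By \cref{forget-point-equivalence} and \cref{toda-bracket-well-defined}, this matches the definition of $\h X$ being a bracket sequence.

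For the ``if'' direction of part (2), my plan is to apply \cref{toda-bracket-main-thm} together with \cref{forget-point-equivalence} to replace $X$ by a weakly equivalent $X' \in \Ss^{[3]}$ for which $f_1 \circ f_0$ and $f_2 \circ f_1$ are literally constant at chosen basepoints $c_2 \in X'(2)$ and $c_3 \in X'(3)$. I will then construct a strict $1$-interleaving between $\j(X')$ and $\j(\ast)$: the maps $\j(X')(n) \to \j(\ast)(n+1)$ are the unique such maps, and the maps $\j(\ast)(n) \to \j(X')(n+1)$ for $n=0,1,2$ send the unique element of $\ast$ to a chosen point of $X'(1)$ and to $c_2, c_3$ respectively. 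The interleaving identities are then precisely the conditions that $f_1 \circ f_0$ and $f_2 \circ f_1$ are constant at $c_2$ and $c_3$, and since $\j(X') \simeq \j(X)$, this exhibits $\j(X)$ as $1$-homotopy interleaved with $\j(\ast)$.

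The hard part will be the ``only if'' direction. Given $A \simeq \j(X)$ and $B \simeq \j(\ast)$ strictly $1$-interleaved via $(p,q)$, I plan to extract the Toda bracket obstruction directly from the interleaving data. Using contractibility of $B(1)$ and $B(2)$, I will extend $p_0,p_1$ to maps $\tilde p_0 \colon CA(0) \to B(1)$ and $\tilde p_1 \colon CA(1) \to B(2)$ and set $\alpha := q_1 \circ \tilde p_0$ and $\beta := q_2 \circ \tilde p_1$; by the interleaving identities these are null-homotopies of $f_1 \circ f_0$ and $f_2 \circ f_1$, so the induced map on the pushout $P = CA(0) \cup_{A(0)} CA(1) \simeq \Sigma A(0)$ given by $f_2\circ\alpha$ on $CA(0)$ and $\beta$ on $CA(1)$ is a representative $\phi$ of the Toda bracket. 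The crucial step is to realize $\phi$ as $q_2 \circ \psi$ where $\psi \colon P \to B(2)$ is defined by $\phi^B_{1,2} \circ \tilde p_0$ on $CA(0)$ and $\tilde p_1$ on $CA(1)$: naturality of $p$ (giving $\phi^B_{1,2} \circ p_0 = p_1 \circ f_0$) ensures $\psi$ is well-defined on the pushout, and naturality of $q$ (giving $q_2 \circ \phi^B_{1,2} = f_2 \circ q_1$) gives the factorization $q_2 \circ \psi = \phi$. Since $B(2)$ is contractible, $\psi$, and hence $\phi$, must be null-homotopic. The main obstacle is that one cannot naively replace $B$ by $\j(\ast)$ strictly while preserving the $1$-interleaving, since doing so would require the composites $\phi^A_{n,n+2}$ to be literally constant rather than merely null-homotopic; the argument circumvents this by extracting the Toda obstruction directly from the given interleaving, crucially exploiting naturality of both transformations.
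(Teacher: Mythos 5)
Your proposal is correct and follows essentially the same route as the paper: part (1) by unwinding the forced interleaving data against \(\j(\ast)\), the ``if'' direction of (2) by rectifying via \cref{toda-bracket-main-thm} and concluding with the conservativity of \(\h\) (\cref{h-is-smothering}), and the ``only if'' direction by extracting compatible null-homotopies from a strict interleaving with a contractible-valued replacement of \(\j(\ast)\). The only cosmetic difference is the last step, where you deduce vanishing by factoring the Toda representative through the weakly contractible \(B(2)\) using naturality of \(p\) and \(q\), while the paper uses the same data (including the structure map \(B(1)\to B(2)\), its ``diagonal morphism'') to produce a filler of the square defining the bracket; like the paper, you leave implicit the preliminary replacement by pointed CW models needed so that the cone extensions exist and the resulting class genuinely lies in the pointed Toda bracket.
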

\begin{proof}
    Statement \ref{condition1-hi-trivial} follows directly from \cref{toda-bracket-well-defined}.
    For \ref{condition2-hi-trivial}, note that if the Toda bracket of \(\h X\) vanishes, then, by \cref{toda-bracket-main-thm},
    there exists \(X' \in \Sps^{[3]}\) such that \(\h X' \cong \h X\) and such that the composite of consecutive
    maps of \(X'\) are null maps. In particular, \(\j(X')\) is \(1\)-interleaved with \(\j(\ast)\),
    and, since \(\h(\j(X')) \cong \h(\j(X))\), we have that \(\j(X') \simeq \j(X)\) by \cref{h-is-smothering},
    so \(\j(X)\) and \(\j(\ast)\) are \(1\)-homotopy interleaved.

    For the converse of \ref{condition2-hi-trivial}, assume that \(\j(X)\) and \(\j(\ast)\) are \(1\)-homotopy interleaved.
    It follows that there exists a commutative diagram of pointed spaces and pointed maps
\[
    \begin{tikzpicture}
      \matrix (m) [matrix of math nodes,row sep=2em,column sep=2em,minimum width=2em,nodes={text height=1.75ex,text depth=0.25ex}]
        { X'(0) & X'(1) & B\\
          A & X'(2) & X'(3), \\};
      \path[line width=0.75pt, -{>[width=8pt]}]
        (m-1-1) edge node [below] {$f'_0$} (m-1-2)
                edge node [left] {} (m-2-1)
        (m-1-2) edge node [right] {$f'_1$} (m-2-2)
                edge node [left] {} (m-1-3)
        (m-2-2) edge node [above] {$f'_2$} (m-2-3)
        (m-1-3) edge node [right] {} (m-2-3)
        (m-2-1) edge node [below] {} (m-2-2)
                edge [bend left=8,-,line width=6pt,draw=white,shorten >=8pt] (m-1-3)
                edge [bend left=8,shorten >=8pt] node [left] {} (m-1-3)
        ;
    \end{tikzpicture}
\]
    with \(A\) and \(B\) contractible and \(X' \in \Sp^{[3]}\) such that \(X' \simeq X\), as diagrams of unpointed spaces.
    It suffices to show that the Toda bracket of \(X'\) vanishes.
    For this, note that, using the diagonal morphism \(A \to B\),
    we can find maps \(\alpha \colon CX'(0) \to X'(2)\) and \(\beta \colon CX'(1) \to X'(3)\)
    such that \(\beta \circ Cf'_1 = f'_2 \circ \alpha\).
    In particular, in this case, there is a diagonal filler for the square \eqref{square-inducing-toda}
    and thus the induced map \(\Sigma X'(0) \to X'(3)\) is null-homotopic, as required.
\end{proof}

The following lemma is clear.

\begin{lem}
    \label{iterleaved-in-R-and-in-Z}
    Let \(X,Y \in C^\ZZ\).
    If \(i_*(X), i_*(Y) \in C^\RR\) are \(r\)-interleaved for some \(0 \leq r < 3/2\), then
    \(X,Y \in C^\ZZ\) are \(1\)-interleaved.\qed
\end{lem}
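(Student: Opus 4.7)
The plan is to rectify the given \(r\)-interleaving \(\interleaving{f}{g}{i_*(X)}{i_*(Y)}{r}{r}\) directly into a \(1\)-interleaving in \(C^\ZZ\) by restricting each interleaving morphism to a carefully chosen integer-shifted real parameter. The role of the hypothesis \(r < 3/2\) is purely numerical: \(3/2\) is the largest value of \(r\) for which \(\floor{n+2r} \leq n+2\) holds for every integer \(n\), and the equality \(\floor{n+2r} = n+2\) is exactly what is required for the twofold composition of a \(1\)-interleaving in \(C^\ZZ\) to land in the correct target.

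I would split the argument into cases according to \(r\). For \(r \in (0, 1]\), set \(F_n := f_{n+1-r}\) and \(G_n := g_{n+1-r}\); the identities \(\floor{n+1-r} = n\) and \(\floor{n+1} = n+1\), valid on this range, ensure that these components are maps \(X(n) \to Y(n+1)\) and \(Y(n) \to X(n+1)\), respectively, as required of \(1\)-morphisms in \(C^\ZZ\). For \(r \in [1, 3/2)\), the identity \(\floor{n+r} = n+1\) makes the unshifted components \(f_n\) and \(g_n\) already into \(1\)-morphisms in \(C^\ZZ\), so I take \(F_n := f_n\) and \(G_n := g_n\). The degenerate case \(r = 0\) is handled by noting that \(i_*(f)\) is then an isomorphism, from which a \(1\)-interleaving is obtained by post-composing \(f\) and \(g\) with one structure map. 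In every case, naturality of \(F\) and \(G\) in \(C^\ZZ\) is inherited from naturality of \(f\) and \(g\) in \(C^\RR\) at the chosen parameters.

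The substantive step is the verification of the identity \(G_{n+1} \circ F_n = \phi^X_{n, n+2}\), together with its symmetric counterpart. In either case, the composite takes the form \(g_t \circ f_s\) for explicit real parameters \(s\) and \(t\). The \(r\)-interleaving identity in \(C^\RR\) evaluated at \(s\) yields \(g_{s+r} \circ f_s = \phi^X_{n, \floor{n+2r}} = \phi^X_{n, n+2}\), using the hypothesis \(r < 3/2\). To bridge from \(g_{s+r}\) to \(g_t\) I would apply naturality of \(g\): by the case split, the parameters \(t\) and \(s+r\) always lie in intervals of constancy of \(\floor{-}\) that are either common or adjacent, so the relevant naturality square has identity or single-structure-map legs. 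Combining this identification with the previous identity yields exactly \(\phi^X_{n, n+2}\). The symmetric identity \(F_{n+1} \circ G_n = \phi^Y_{n, n+2}\) is proved in the same way.

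The main obstacle is the bookkeeping between the real parameter \(r\) and the integer shifts of \(C^\ZZ\), and this is precisely where the bound \(3/2\) is used. For \(r = 3/2\) one has \(\floor{n+2r} = n+3\), and the construction above produces at best a \(2\)-interleaving, so the stated bound is sharp for this style of argument.
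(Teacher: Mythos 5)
The paper does not actually prove this lemma: it is stated as clear and left to the reader, so your write-up supplies the omitted verification rather than diverging from an existing argument, and in substance it is correct. The case split, the choice of parameters \(n+1-r\) for \(r\in(0,1]\) and \(n\) for \(r\in[1,3/2)\), the inheritance of naturality from \(C^\RR\), and the role of the bound \(3/2\) (namely that \(\floor{n+2r}\leq n+2\), with equality exactly when \(r\in[1,3/2)\)) all check out when written out in full. One bookkeeping slip should be corrected: in the case \(r\in(0,1]\), evaluating the interleaving identity at \(s=n+1-r\) gives \(g_{s+r}\circ f_s=\phi^X_{n,\floor{n+1+r}}\), which is \(\phi^X_{n,n+1}\) when \(r<1\), not \(\phi^X_{n,\floor{n+2r}}=\phi^X_{n,n+2}\) as you display; the missing factor \(\phi^X_{\floor{n+1+r},\,n+2}\) is precisely what your naturality bridge from \(g_{n+1}\) to \(g_{n+2-r}\) contributes (since \(\floor{n+2-r}=n+1\) there), so the final identities \(G_{n+1}\circ F_n=\phi^X_{n,n+2}\) and \(F_{n+1}\circ G_n=\phi^Y_{n,n+2}\) do hold as claimed, but the intermediate equality as written is only valid in the case \(r\in[1,3/2)\), where \(s=n\). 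A marginally more uniform variant, which avoids the case split in the definitions (though not entirely in the verification), is to restrict \(f\) and \(g\) to integer parameters and post-compose with the structure maps \(\phi^Y_{\floor{n+r},n+1}\) and \(\phi^X_{\floor{n+r},n+1}\); this covers all \(0\leq r<3/2\) with the same two identities to check.
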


We are now ready to prove the lower bound.

\begin{prop}
    \label{lower-bound-prop}
    Let \(\M = \SS\). If \(d_{HI} \leq c d_{HC}\) then \(c \geq 3/2\).
\end{prop}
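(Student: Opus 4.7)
The plan is to exhibit a persistent space realising the desired ratio. First I would choose a bracket sequence $X \in \Ss^{[3]}$ whose Toda bracket does not vanish; such bracket sequences are available from classical homotopy theory (for instance, a sequence of spheres built from the Hopf map $\eta$ and multiplication by $2$ realises the non-trivial Toda bracket $\langle \eta, 2, \eta \rangle$). With this choice, \cref{characterization-homotopy-interleaved-with-ast}(2) implies that $\j(X)$ and $\j(\ast)$ are not $1$-homotopy interleaved in $\SS^\ZZ$, while \cref{characterization-homotopy-interleaved-with-ast}(1) still provides a $1$-interleaving of $\h(\j(X))$ with $\h(\j(\ast))$ in $\Ho(\SS)^\ZZ$.

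Next I would transport this situation to $\SS^\RR$ via the functor $i_*$. Since $i_*$ commutes pointwise with passage to the homotopy category, the $1$-interleaving of $\h(\j(X))$ with $\h(\j(\ast))$ in $\Ho(\SS)^\ZZ$ pushes forward to a $1$-interleaving of $\h(i_*(\j(X)))$ with $\h(i_*(\j(\ast)))$ in $\Ho(\SS)^\RR$, giving $d_{HC}(i_*(\j(X)), i_*(\j(\ast))) \leq 1$. Now, if $d_{HI} \leq c \cdot d_{HC}$ held with $c < 3/2$, then this pair would be $r$-homotopy interleaved in $\SS^\RR$ for some $r < 3/2$; unwinding the definition, there would exist $A, B \in \SS^\RR$ with $A \simeq i_*(\j(X))$ and $B \simeq i_*(\j(\ast))$ together with a strict $r$-interleaving between $A$ and $B$.

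The main step, and the chief obstacle, is to conclude from this that $\j(X)$ and $\j(\ast)$ are themselves $1$-homotopy interleaved in $\SS^\ZZ$, contradicting \cref{characterization-homotopy-interleaved-with-ast}(2). The tool for this descent is \cref{iterleaved-in-R-and-in-Z}, which provides exactly such a conclusion, but only for \emph{strict} $r$-interleavings between objects in the image of $i_*$. To apply it, I would rectify the strict $r$-interleaving of $A$ and $B$ into one between $i_*(\tilde X)$ and $i_*(\tilde Y)$ for some $\tilde X \simeq \j(X)$ and $\tilde Y \simeq \j(\ast)$; this uses that $i_* \colon \SS^\ZZ \to \SS^\RR$ is a left Quillen functor for the projective model structures, together with a standard lifting argument between cofibrant replacements that transfers the interleaving data to objects in the image of $i_*$. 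Once this rectification is in place, \cref{iterleaved-in-R-and-in-Z} produces a strict $1$-interleaving of $\tilde X$ and $\tilde Y$ in $\SS^\ZZ$, which gives the $1$-homotopy interleaving of $\j(X)$ and $\j(\ast)$ and the desired contradiction.
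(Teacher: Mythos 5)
Your overall skeleton is the paper's: choose a bracket sequence with non-vanishing Toda bracket, use \cref{characterization-homotopy-interleaved-with-ast} to see that \(\j(X)\) and \(\j(\ast)\) are not \(1\)-homotopy interleaved while \(\h(\j(X))\) and \(\h(\j(\ast))\) are \(1\)-interleaved, push forward along \(i_*\) to get \(d_{HC}\leq 1\), and derive a contradiction from a hypothetical \(r\)-homotopy interleaving with \(r<3/2\). The gap is in your ``main step''. You propose to rectify the strict \(r\)-interleaving between \(A\simeq i_*(\j(X))\) and \(B\simeq i_*(\j(\ast))\) into a strict \(r\)-interleaving between \(i_*(\tilde X)\) and \(i_*(\tilde Y)\), citing that \(i_*\) is left Quillen plus ``a standard lifting argument between cofibrant replacements''. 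No such standard argument exists: lifting arguments only produce maps up to homotopy, whereas you need both objects to be strictly constant on the intervals \([n,n+1)\) \emph{and} the two interleaving morphisms to satisfy the strict interleaving identities on the nose; this is a rectification problem of exactly the kind this paper is about (and which \cref{impossibility-result} shows can fail outright in a closely related setting). Worse, by \cref{iterleaved-in-R-and-in-Z} itself, an \(r\)-interleaving with \(r<3/2\) between objects in the image of \(i_*\) is the same thing as a \(1\)-interleaving between \(\tilde X\) and \(\tilde Y\), so your rectification claim is literally equivalent to the statement you are trying to reach (that \(\j(X)\) and \(\j(\ast)\) are \(1\)-homotopy interleaved); nothing has been reduced.

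The way to close this, and what the paper's terse proof is implicitly relying on, is to avoid rectification altogether and argue directly with the strict \(r\)-interleaving \(f\colon A\to B^r\), \(g\colon B\to A^r\). Since \(B\simeq i_*(\j(\ast))\), each \(B(t)\) with \(t\geq 0\) is weakly contractible; since \(r<3/2\), the parameters \(0\leq 1\leq 2r\leq 1+2r\) are all \(<4\), so the chain \(A(0)\to A(1)\to A(2r)\to A(1+2r)\) is weakly equivalent, as a \([3]\)-diagram, to \(X(0)\to X(1)\to X(2)\to X(3)\). The naturality squares of \(f\) and \(g\) together with the interleaving identities then give a strictly commutative diagram in which the composites \(A(0)\to A(2r)\) and \(A(1)\to A(1+2r)\) factor compatibly through the weakly contractible spaces \(B(r)\) and \(B(1+r)\), with diagonal \(\phi^B_{r,1+r}\) --- precisely the configuration used in the converse direction of the proof of \cref{characterization-homotopy-interleaved-with-ast}. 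That argument then shows the Toda bracket of \(X\) vanishes, contradicting the choice of \(X\); this is also exactly where the arithmetic \(r<3/2\) underlying \cref{iterleaved-in-R-and-in-Z} enters. With your step 5 replaced by this direct extraction, the proof goes through.
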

\begin{proof}
    By \cref{iterleaved-in-R-and-in-Z} and \cref{characterization-homotopy-interleaved-with-ast},
    it suffices to find a bracket sequence \(X \in \Ho(\Sp)^{[3]}\) valued in simply connected spaces
    such that its Toda bracket does not vanish.
    Examples of this are given in \cite{Toda}.
    A classical example, referenced in \cite{lb}, is
    \(S^4 \to S^4 \to S^3 \to S^3\) with the first and last maps
    degree \(2\) maps, and the middle map the suspension of the Hopf map.
\end{proof}

\begin{rmk}
    \label{discussion-distances-different}
    \cref{lower-bound-prop} implies in particular that \(d_{HI} \neq d_{HC}\).
    As mentioned in the introduction, we know that we have \(d_{HI} \geq d_{IHC} \geq d_{HC}\),
    so it is natural to wonder whether we have \(d_{HI} \neq d_{IHC}\) or \(d_{IHC} \neq d_{HC}\), or both.
    We leave these as open questions.
\end{rmk}

\subsection{Impossibility of rectification in higher dimensions}
\label{impossibility}

In this section, we show that \cref{strictification-step} has no analogue for multi-persistent spaces;
we thank Alex Rolle for pointing this out to us.
We prove this for $m=2$ and remark that a similar argument works for \(m > 2\).

\begin{prop}
    \label{impossibility-result}
If \(m = 2\), there is no constant \(c > 0 \in \RR\)
such that for all \(\delta > 0 \in \RR^m\),
if \(X,Y \in \SS^{\RR^m}\) are \(\delta\)-homotopy commutative interleaved,
    then they are \(c\delta\)-homotopy interleaved.
\end{prop}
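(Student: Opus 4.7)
The plan is to show that, given any $c > 0$, one can exhibit a pair $X, Y \in \SS^{\RR^2}$ and a bi-degree $\delta \in \RR^2_{>0}$ witnessing $\delta$-homotopy commutative interleaving but failing $c\delta$-homotopy interleaving. The obstruction will, in the spirit of the one-parameter lower bound (\cref{lower-bound-prop}), be the non-vanishing of a Toda bracket, but the extra shift direction available in $\RR^2$ will be used to drive the homotopy commutative cost to zero while the homotopy interleaving cost stays bounded below, so that the ratio of the two can be made arbitrarily large. Crucially, the one-parameter zig-zag rectification of \cref{noncoherent-to-strict-zed} has no two-parameter analogue, so no mechanism forces these two quantities to remain within a fixed multiplicative factor.

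First, I would fix a bracket sequence $A_0 \xrightarrow{f_0} A_1 \xrightarrow{f_1} A_2 \xrightarrow{f_2} A_3$ of simply connected pointed CW-complexes whose Toda bracket is non-zero, e.g., the classical example cited in the proof of \cref{lower-bound-prop}. Given $N \in \N$ (to be taken larger than $c$), I would build a 2-persistent space $X_N \in \SS^{\RR^2}$ that embeds this bracket sequence along a genuinely two-dimensional staircase with step size $1/N$ in each coordinate direction, using both directions to separate the three successive maps $f_0, f_1, f_2$. Combined with the vanishing of $[f_1] \circ [f_0]$ and $[f_2] \circ [f_1]$ in $\Ho(\SS)$, the fact that a shift of $(2/N, 2/N)$ suffices to span any two consecutive maps of the staircase should give a $(1/N, 1/N)$-interleaving of $\h X_N$ with the constant singleton in $\Ho(\SS)^{\RR^2}$, hence $d_{HC}(X_N, \ast) \leq 1/N$ in the appropriate sense.

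Finally, I would show that for any $c$ with $c < N$, the persistent space $X_N$ is not $(c/N, c/N)$-homotopy interleaved with the constant singleton $\ast$. The argument would mirror the converse direction of \cref{characterization-homotopy-interleaved-with-ast}: given such a homotopy interleaving, by pulling back along the contractible intermediate spaces (cf.\ the proof of \cref{triangle-inequality}) and inserting a diagonal filler in the Toda square \eqref{square-inducing-toda}, one would produce simultaneous null-homotopies witnessing the vanishing of the Toda bracket of $A_0 \to A_1 \to A_2 \to A_3$, contradicting our choice of bracket sequence. Taking $N > c$ then produces the required counterexample. The main obstacle is the two-dimensional placement of the bracket sequence: the staircase must be a genuine functor on $\RR^2$, its image under $\h$ must really be $(1/N, 1/N)$-interleaved with $\h \ast$ (not just have the individual composites null), and any strict rectification at scale $(c/N, c/N)$ must genuinely yield the coherent null-homotopies needed to trivialise the Toda bracket. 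This two-parameter analogue of the diagonal-filler argument from \cref{characterization-homotopy-interleaved-with-ast} is the technical heart of the proof.
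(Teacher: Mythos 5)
There is a genuine gap, and it lies in the step you yourself identify as the heart of the argument: the claim that \(X_N\) is not \((c/N,c/N)\)-homotopy interleaved with the point for all \(c<N\). Your family \(X_N\) is just a rescaling of one fixed staircase diagram (precompose with multiplication by \(N\) in each coordinate), and all of the interleaving notions are equivariant under such rescalings (cf.\ \cref{delta-to-one}): rescaling divides \emph{both} \(d_{HC}(X_N,\ast)\) and \(d_{HI}(X_N,\ast)\) by \(N\), so their ratio is independent of \(N\) and cannot be driven past any fixed constant. Concretely, since the three maps \(f_0,f_1,f_2\) occupy a staircase of diameter \(O(1/N)\), once the doubled shift spans the whole staircase (say \(\delta'=(3/N,3/N)\)) every structure map of that span lands in the region where \(X_N\) is a single point, and one can write down a \emph{strict} \(\delta'\)-interleaving with the point-like object directly (all the required composites are maps into singletons). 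So \(X_N\) \emph{is} \((c/N,c/N)\)-homotopy interleaved already for \(c\) around \(3\), and your non-interleaving claim fails. The Toda bracket only obstructs at scales where the doubled shift covers exactly two consecutive maps but not three — that is, scales comparable to the step size — which is why in the one-parameter setting it yields only the constant-factor bound \(c\ge 3/2\) of \cref{lower-bound-prop} and can never defeat every multiplicative constant; your appeal to the converse direction of \cref{characterization-homotopy-interleaved-with-ast} breaks down precisely at the large scales \(c/N\) you need. Also, the remark that the absence of a two-parameter analogue of \cref{noncoherent-to-strict-zed} means ``no mechanism forces the two quantities to stay comparable'' is not an argument: the statement to be proved is exactly that no such mechanism exists.

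The paper's proof uses a fundamentally different, non-scaling obstruction: there exist strict square diagrams \(A,B\colon\SQ\to\SS\) with \(\h A\cong \h B\) in \(\Ho(\SS)^{\SQ}\) but \(A\not\simeq B\) (\cref{impossibility-lemma}, non-uniqueness of rectification of homotopy commutative squares — a phenomenon ruled out in one parameter by \cref{h-is-smothering}). Extending \(A\) and \(B\) to bi-persistent spaces \(A',B'\in\SS^{\RR^2}\) (empty in negative degrees, \(A(\floor r,\floor s)\) on \([0,2)^2\), a point beyond), one gets \(\h A'\cong \h B'\), so \(A'\) and \(B'\) are \(\delta\)-homotopy commutative interleaved for \emph{every} \(\delta>0\), while any \(\delta\)-homotopy interleaving with \(\delta<(1/2,1/2)\) would force \(A\simeq B\). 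Taking \(\delta\) small enough that \(c\delta<(1/2,1/2)\) then defeats any given constant \(c\). In short, to kill all constants you need a pair with \(d_{HC}=0\) but \(d_{HI}>0\) (or at least a family that is not merely a rescaled copy of a single example); a shrinking Toda staircase provides neither, so your strategy would need to be replaced rather than repaired at the technical level.
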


Let \(\SQ\) denote the subposet of \(\RR^2\) spanned by \(\{(0,0), (0,1), (1,0), (1,1)\}\),
so that a functor \(\SQ \to C\) from \(\SQ\) to a category \(C\) corresponds
to a commutative square in \(C\).
We will use the following well-known fact, which says that
a homotopy commutative diagram can have different, non-equivalent lifts.
For a specific instance see, e.g., \cite{GW}.

\begin{lem}
    \label{impossibility-lemma}
There exist \(A,B \colon \SQ \to \SS\) such that
\(\h A \cong \h B \in \Ho(\SS)^\SQ\) and such that \(A \not\simeq B\).\qed
\end{lem}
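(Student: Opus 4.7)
The plan is to produce two strict squares that become equal (not merely isomorphic) as objects of $\Ho(\SS)^\SQ$, but are distinguished as strict squares by an invariant of square-wise weak equivalence---namely, the homotopy class of the induced map from the (homotopy) pushout of the top-left span to the bottom-right corner. If the edge $A(0,0)\to A(1,0)$ is chosen to be a cofibration, then the homotopy pushout coincides with the strict pushout, and the resulting induced map is manifestly an invariant of pointwise weak equivalence of squares by the usual functoriality of pushouts along weak equivalences of cofibrant spans.

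Concretely, I will take $A$ and $B$ with shared corners $(S^1, \ast, D^2, S^2)$ at positions $((0,0), (0,1), (1,0), (1,1))$ and shared canonical edges $S^1\to\ast$, $S^1\hookrightarrow D^2$, $\ast\hookrightarrow S^2$, differing only in the filler $H\colon D^2 \to S^2$. Strict commutativity forces $H$ to carry $\partial D^2$ to the basepoint, and rel-boundary homotopy classes of such $H$ are in bijection with $\pi_2(S^2)\cong \Z$, so I will take $H_A$ to be the constant map and $H_B$ to be the standard quotient $D^2 \to D^2/\partial D^2 \cong S^2$. Since $D^2$ is contractible, both $H_A$ and $H_B$ are null-homotopic, so $\h A$ and $\h B$ coincide on the nose as functors $\SQ\to \Ho(\SS)$, giving $\h A \cong \h B$.

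For the non-equivalence $A\not\simeq B$, I will observe that the induced maps from the pushout $\ast\cup_{S^1} D^2 \cong S^2$ to the bottom-right corner $S^2$ are the constant map (degree $0$) for $A$ and the identity (degree $1$) for $B$. A zig-zag of pointwise weak equivalences of squares between $A$ and $B$ restricts to a zig-zag of pointwise weak equivalences of the top-left spans (whose relevant leg is a cofibration, so the homotopy pushout is the strict pushout) and is compatible with the bottom-right corner, hence induces a zig-zag of weak equivalences of arrows in $\SS^{[1]}$ between the two induced maps. However, self-maps of $S^2$ of distinct degrees are not weakly equivalent as arrows, since any weak equivalence $S^2 \to S^2$ induces an isomorphism on $\pi_2(S^2) \cong \Z$, hence has degree $\pm 1$. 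The point requiring most care is confirming this functoriality of the pushout-based invariant along zig-zags, but this reduces to standard model-categorical considerations given the cofibrancy of $S^1\hookrightarrow D^2$.
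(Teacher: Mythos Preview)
Your construction is correct. The paper does not prove this lemma---it records the statement as well-known and defers to the literature for an explicit instance---so there is no paper-proof to compare against beyond noting that your example is one of the standard ones. The squares $A$ and $B$ agree on objects and on three of the four structure maps, and the remaining map $D^2\to S^2$ has contractible domain, so $\h A=\h B$ on the nose; the comparison map from the homotopy pushout of the span $A(0,1)\leftarrow A(0,0)\to A(1,0)$ to the corner $A(1,1)$ is a genuine invariant of the isomorphism class of $A$ in $\Ho\bigl(\SS^{\SQ}\bigr)$, and it distinguishes degree $0$ from degree $\pm 1$ as self-maps of $S^2$.

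One remark on phrasing: when you say the zig-zag of spans has ``relevant leg a cofibration, so the homotopy pushout is the strict pushout,'' that is guaranteed at the endpoints $A$ and $B$ but not at the intermediate squares of an arbitrary zig-zag. The clean formulation is that the assignment sending a square $C$ to the arrow from the homotopy pushout of $C(0,1)\leftarrow C(0,0)\to C(1,0)$ into $C(1,1)$---with the homotopy pushout computed via a functorial cofibrant replacement of the span---defines a functor $\SS^{\SQ}\to\SS^{[1]}$ preserving pointwise weak equivalences and hence descends to $\Ho\bigl(\SS^{\SQ}\bigr)\to\Ho\bigl(\SS^{[1]}\bigr)$; one then identifies the homotopy pushout with the strict one at the endpoints. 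Your closing sentence acknowledges exactly this, so the issue is purely expository.
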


\begin{proof}[Proof of \cref{impossibility-result}]
    Given a diagram \(A \colon \SQ \to \SS\), consider the bi-persistent space
    \(A' \colon \RR^2 \to \SS\) such that \(A'(r,s) = \emptyset\) whenever \(r\) or \(s\)
    are negative, \(A'(r,s) = A(\floor{r},\floor{s})\) whenever \(0\leq r,s < 2\),
    and \(A'(r,s)\) is the singleton space whenever \(0\leq r,s\) and \(2 \leq \max(r,s)\).
    Let \(A,B \colon \SQ \to \SS\).
    Note that if \((0,0) \leq \delta < (1/2,1/2) \in \RR^2\) and \(A',B' \in \SS^{\RR^2}\)
    are \(\delta\)-homotopy interleaved, then we have \(A \simeq B\).

    To prove the result, it is enough to show that
    there exist bi-persistent spaces \(X,Y \in \SS^{\RR^2}\) that are
    \(0\)-homotopy commutative interleaved, i.e., such that \(\h X \cong \h Y\),
    which are not \(\delta\)-homotopy interleaved for any \(0 \leq \delta < (1/2,1/2) \in \RR^2\).
    In order to do this, we can let \(A\) and \(B\) be as in \cref{impossibility-lemma} and
    take \(X = A'\) and \(Y = B'\).
\end{proof}

\section{Projective cofibrant persistent simplicial sets}
\label{cofibrant-spaces-section}

The purpose of this section is to characterize projective cofibrant persistent simplicial sets as filtered simplicial sets (\cref{t: cofibrant = filtered}).
We work with simplicial sets indexed by an arbitrary poset $(P,\leq)$.

\begin{defn}
   A \define{\(P\)-filtered simplicial set} (\define{filtered simplicial set} when there is no risk of confusion) is a simplicial set \(X\) equipped with functions \(\beta_n \colon X_n \to P\), satisfying:
   \begin{itemize}
    \item \(\beta_{n-1}(d_i(\sigma)) \leq \beta_n(\sigma)\)
    for every \(n \geq 1\), \(\sigma \in X_n\), and boundary map \(d_i \colon X_n \to X_{n-1}\);
    \item \(\beta_{n+1}(s_i(\sigma)) \leq \beta_n(\sigma)\) for every \(n \geq 0\),
    \(\sigma \in X_n\), and degeneracy map \(s_i \colon X_n \to X_{n+1}\).
\end{itemize}
\end{defn}
When there is no risk of ambiguity, we denote the filtered simplicial set \((X,\beta)\) simply by \(X\).

\begin{defn}
Given a filtered simplicial set \((X,\beta)\) define a persistent simplicial set \(\widehat{(X,\beta)} \in \sSet^{P}\) such that
for \(r  \in P\) we have $\widehat{(X,\beta)}(r)_n = \left\{\sigma \in X_n : \beta_n(\sigma) \leq r\right\}$,
with faces and degeneracies given by restricting the ones of \(X\).
\end{defn}

By a standard abuse of language,
We say that a persistent simplicial set \define{is a filtered simplicial set}
if it is isomorphic to \(\widehat{Y}\), for \(Y\) a filtered simplicial set.

The following result is a characterization of filtered simplicial sets among persistent simplicial sets by means of easily verified point-set conditions.

\begin{lem}
\label{p: filtered char}
A persistent simplicial set \(X \in \sSet^{P}\) is a filtered simplicial set if and only if the following conditions are satisfied:
\begin{enumerate}
	\item \label{cond-1} The structure morphism \(X(r) \to X(r')\) is a monomorphism for every \(r\leq r'\) in \(P\).
        In particular, up to isomorphism, we may, and do, assume that \(X(r)\) is a subsimplicial set of \(X(r')\).
    \item \label{cond-2} For every simplex \(\sigma\in \bigcup_{r \in P}X(r)\), the set \(\{t \in P \ \colon \ \sigma \in X(t)\}\) has a minimum.
\end{enumerate}
\end{lem}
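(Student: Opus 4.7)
The proof proceeds by unwinding the two definitions and checking the correspondence between a filtered simplicial set $(Y,\beta)$ and a persistent simplicial set $X$ satisfying the two point-set conditions.

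For the forward implication, suppose $X \cong \widehat{(Y,\beta)}$. Condition \ref{cond-1} is immediate since each structure morphism $\widehat{(Y,\beta)}(r) \to \widehat{(Y,\beta)}(r')$ is a literal inclusion of subsets of $Y_n$ in each simplicial degree. For condition \ref{cond-2}, given any $n$-simplex $\sigma$ appearing in some $\widehat{(Y,\beta)}(r)$, the value $\beta_n(\sigma) \in P$ itself satisfies $\sigma \in \widehat{(Y,\beta)}(\beta_n(\sigma))$, and by the definition of $\widehat{(Y,\beta)}$ it is exactly the minimum of $\{t \in P : \sigma \in \widehat{(Y,\beta)}(t)\}$.

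For the converse, suppose $X \in \sSet^P$ satisfies \ref{cond-1} and \ref{cond-2}. Using \ref{cond-1}, identify each $X(r)$ with its image inside the colimit, and define an underlying simplicial set $Y$ by $Y_n = \bigcup_{r \in P} X(r)_n$, with faces and degeneracies induced from any (equivalently, every) $X(r)$ containing the simplex in question; this is well-defined precisely because the structure maps are monomorphisms and they are compatible with faces and degeneracies. Using \ref{cond-2}, define $\beta_n \colon Y_n \to P$ by $\beta_n(\sigma) = \min\{t \in P : \sigma \in X(t)\}$.

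It remains to verify the two filtration inequalities for $(Y,\beta)$ and to check that $\widehat{(Y,\beta)} \cong X$. For the inequalities, note that $\sigma \in X(\beta_n(\sigma))$, hence $d_i(\sigma) \in X(\beta_n(\sigma))$ since faces are taken inside each $X(r)$, and therefore $\beta_{n-1}(d_i(\sigma)) \leq \beta_n(\sigma)$ by minimality; the argument for degeneracies is identical. For the identification $\widehat{(Y,\beta)} \cong X$, observe that $\sigma \in \widehat{(Y,\beta)}(r)_n$ means $\beta_n(\sigma) \leq r$, which by the definition of $\beta_n$ and the fact that $X(\beta_n(\sigma)) \hookrightarrow X(r)$ is equivalent to $\sigma \in X(r)_n$; naturality in $r$ is clear. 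There is no serious obstacle here, only careful bookkeeping — the genuine content of the lemma is that conditions \ref{cond-1} and \ref{cond-2} are exactly what make the naive definition $\beta_n(\sigma) = \min\{t : \sigma \in X(t)\}$ both well-defined and functorially inverse to $(-) \mapsto \widehat{(-)}$.
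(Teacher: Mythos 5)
Your proposal is correct and follows essentially the same route as the paper's proof: the forward direction is immediate, and for the converse one sets \(Y = \bigcup_{r \in P} X(r)\) (using condition \ref{cond-1}), defines \(\beta_n(\sigma) = \min\{t \in P : \sigma \in X(t)\}\) (using condition \ref{cond-2}), and checks \(X \cong \widehat{(Y,\beta)}\). The paper simply leaves the verification of the filtration inequalities and the isomorphism as ``clear,'' which you have spelled out.
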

\begin{proof}
    The only non-trivial part is that if \(X\) satisfies the two conditions in the statement then it is filtered. Set \(Y = \bigcup_{r \in P} X(r)\), which makes sense thanks to condition \ref{cond-1}.
Given \(\sigma \in Y_n\), define \(\beta_n(\sigma) := \min\{t \in P \ \colon \ \sigma \in X(t)\}\), which is well-defined thanks to condition \ref{cond-2}.
We then have $X \cong \widehat{Y}$.
The rest of the proof is clear.
\end{proof}

The proof of the following lemma a straightforward application of Lemma \ref{p: filtered char}.
We use the term \define{cell attachment} to indicate any pushout of a generating cofibration \(r\odot \partial \Delta^n \to r\odot \Delta^n\).
\begin{lem}
    \label{retract + pushout + filtcolim-of-filtered-is-filtered}
    \begin{enumerate}
    \item A retract of a filtered simplicial set is filtered.
    \item If the domain of a cell attachment is a filtered simplicial set, then the codomain is too.
\item Let \(\zeta\) be a limit ordinal and let \(X_{\bullet}\colon\zeta \to \sSet^{P}\) be a diagram of persistent simplicial sets, where for each \(\gamma < \zeta \) we have that the map \(X_{\gamma} \to X_{\gamma+1}\) is a cell attachment. If \(X_{\gamma}\)
is a filtered simplicial set for every \(\gamma < \zeta \), then \(X_{\zeta } = \colim_{\gamma < \zeta }X_{\gamma}\) is a filtered simplicial set.\qed
    \end{enumerate}
\end{lem}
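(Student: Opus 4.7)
The plan is to apply the characterization from \cref{p: filtered char} in each case, reducing the problem to verifying (a) that the structure morphisms $X(r) \to X(r')$ are monomorphisms and (b) that every simplex of $X$ has a minimum filtration level at which it appears. The common mechanism is that the constructions involved—retracts, cell attachments, and transfinite compositions of cell attachments—are all assembled pointwise from monomorphisms, which is exactly what is needed to preserve both conditions.

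For part (1), from a retract $X \xrightarrow{\iota} Y \xrightarrow{\rho} X$ one sees that each component $\iota_r$ is a split monomorphism, and naturality together with the monomorphism condition for $Y$ forces the structure maps of $X$ to be monomorphisms. For the minimum condition, one checks that the minimum $t_0$ of $\{t \colon \iota(\sigma) \in Y(t)\}$, which exists since $Y$ is filtered, coincides with the minimum of $\{t \colon \sigma \in X(t)\}$, using $\rho$ to transport back.

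For part (2), observe that, at each level $s$, $Y(s)$ is either $X(s)$ (when $s \not\geq r$) or the pointwise pushout attaching one non-degenerate $n$-simplex to $X(s)$ along the prescribed boundary map (when $s \geq r$). Since pushouts of monomorphisms along arbitrary maps are monomorphisms in $\sSet$, the structure maps of $Y$ remain monomorphisms. Every simplex of $Y$ is either inherited from $X$, carrying its minimum from $X$, or is an iterated degeneracy of the newly attached cell, whose minimum filtration level is clearly $r$.

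For part (3), filtered colimits in $\sSet$ commute with finite limits and hence preserve monomorphisms; since filtered colimits in $\sSet^P$ are computed pointwise, the structure maps of $X_\zeta$ are filtered colimits of the structure maps of the $X_\gamma$ and are therefore monomorphisms. For the minimum condition, every simplex $\sigma$ of $X_\zeta$ arises from some smallest stage $X_{\gamma_0}$, and because each transition $X_\gamma \to X_{\gamma+1}$ is a pointwise monomorphism, the set $\{t \colon \sigma \in X_\zeta(t)\}$ equals $\{t \colon \sigma \in X_{\gamma_0}(t)\}$, so the minimum is inherited from $X_{\gamma_0}$. The most delicate point across the three cases is ensuring that the monomorphism and minimum-filtration conditions interact correctly, but in each case this is immediate once \cref{p: filtered char} is invoked.
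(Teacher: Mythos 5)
Your proof is correct and follows essentially the route the paper intends: the paper omits the argument entirely, describing the lemma as a straightforward application of \cref{p: filtered char}, and your case-by-case verification of the monomorphism and minimum-level conditions is the natural way to carry that out. One small repair for part (3): the equality of level-sets does not follow from pointwise monomorphy alone (a pointwise monomorphism can in general make an old simplex appear at earlier filtration levels), but rather from the observation you already use in part (2), namely that a cell attachment adds, at every level, simplices disjoint from the image of the old ones, so an old simplex retains exactly its old set of levels along the (continuous) transfinite composition.
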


The recognition principle for projective cofibrant persistent simplicial sets is now a consequence of \cref{retract + pushout + filtcolim-of-filtered-is-filtered} and the fact that the cofibrant objects in a cofibrantly generated model category are precisely the retracts of transfinite compositions of cell attachments (\cite[Proposition~2.1.18(b)]{Ho}).

\begin{prop}
\label{t: cofibrant = filtered}
    A persistent simplicial set is filtered if and only if it is projective cofibrant.\qed
\end{prop}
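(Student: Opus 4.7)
The plan is to establish both implications using the same toolkit: the characterization of cofibrant objects in a cofibrantly generated model category as retracts of transfinite compositions of cell attachments (\cite[Proposition~2.1.18(b)]{Ho}), combined with the closure properties of filtered persistent simplicial sets recorded in \cref{retract + pushout + filtcolim-of-filtered-is-filtered}.

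For the direction \emph{projective cofibrant implies filtered}, I would first observe that the initial object of \(\sSet^P\), namely the constant empty simplicial set, is filtered, since both conditions of \cref{p: filtered char} are trivially satisfied. Starting from this initial object, a transfinite composition of cell attachments produces a filtered persistent simplicial set: this is an induction along the ordinal, using part~(2) of \cref{retract + pushout + filtcolim-of-filtered-is-filtered} at successor stages and part~(3) at limit stages. Part~(1) of the same lemma then takes care of the passage from cell complexes to arbitrary retracts, yielding the claim.

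For the converse, \emph{filtered implies projective cofibrant}, given a filtered simplicial set \((X,\beta)\) I would exhibit \(\widehat{(X,\beta)}\) as a genuine cell complex built dimension by dimension. Let \(\mathrm{sk}_n \widehat{X} \in \sSet^{P}\) denote the levelwise \(n\)-skeleton. By \cref{p: filtered char}, every \(n\)-simplex \(\sigma\) of \(X\) has a well-defined filtration value \(\beta_n(\sigma) \in P\), and all of its faces have filtration \(\leq \beta_n(\sigma)\) and hence already lie in \(\mathrm{sk}_{n-1}\widehat{X}\) at that filtration level. This lets one write \(\mathrm{sk}_n \widehat{X}\) as the pushout
\[
\begin{tikzcd}
\coprod_{\sigma \in X_n^{\nd}} \beta_n(\sigma) \odot \partial \Delta^n \ar[r] \ar[d] & \mathrm{sk}_{n-1}\widehat{X} \ar[d] \\
\coprod_{\sigma \in X_n^{\nd}} \beta_n(\sigma) \odot \Delta^n \ar[r] & \mathrm{sk}_n \widehat{X},
\end{tikzcd}
\]
where \(X_n^{\nd}\) denotes the set of non-degenerate \(n\)-simplices and the top map is determined levelwise by the attaching maps of \(X\) viewed as an ordinary simplicial set. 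Since \(\widehat{X} = \colim_n \mathrm{sk}_n \widehat{X}\), ordering the summands within each dimension by an arbitrary well-ordering presents \(\widehat{X}\) as a transfinite composition of cell attachments, and in particular as projective cofibrant.

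The main point to verify carefully is that the pushout square above really computes \(\mathrm{sk}_n \widehat{X}\) at every level \(r \in P\) simultaneously: one must check that the summand \(\beta_n(\sigma) \odot \Delta^n\) contributes \(\sigma\) (and its degeneracies) to \(\widehat{X}(r)\) exactly when \(\beta_n(\sigma) \leq r\), which is precisely the defining condition for \(\sigma \in \widehat{X}(r)\). This matches the way the top attaching map factors, because the faces of \(\sigma\) have filtration bounded by \(\beta_n(\sigma)\) and therefore already belong to \(\mathrm{sk}_{n-1}\widehat{X}(\beta_n(\sigma))\). Beyond this bookkeeping, the argument is a straightforward transcription of the usual skeletal decomposition of a simplicial set to the persistent setting, and the statement follows.
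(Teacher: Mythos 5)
Your proposal is correct and takes essentially the same route as the paper, which disposes of the proposition by combining \cref{retract + pushout + filtcolim-of-filtered-is-filtered} with the characterization of cofibrant objects as retracts of transfinite compositions of cell attachments (\cite[Proposition~2.1.18(b)]{Ho}), exactly as in your first direction. Your explicit skeletal decomposition for the converse is the standard argument the paper leaves implicit, and the point you single out to check --- that every face of \(\sigma\) has filtration value \(\leq \beta_n(\sigma)\), so the attaching maps land in \(\mathrm{sk}_{n-1}\) at the correct filtration level --- is indeed the only nontrivial bookkeeping.
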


In practice, many of the persistent spaces relevant to Topological Data Analysis are filtered
simplicial sets.

\begin{eg}
\label{VR-complex example}
The Vietoris--Rips complex associated to a metric space \((X,d_X)\), usually defined to be a persistent simplicial complex, can be turned into a persistent simplicial set by choosing a total order on $X$.
It follows directly from its definition that this persistent simplicial set is filtered.
Other examples of this form include the \v{C}ech complex and the filtrations of \cite{MC}.

An example of a filtered multi-persistent simplicial set is the following.
Given a metric space \((X,d_X)\) together with a real-valued function \(f : X \to \RR\),
one can construct a bi-filtered simplicial set as follows.
For each \(s \in \RR\), consider \(X_s = f^{-1}(-\infty,s]\) and let \(F_{s,r}\) be the Vietoris--Rips complex of $X_s$ at scale $r$.
\end{eg}

We remark that persistent simplicial sets whose structure maps are monomorphism are not necessarily filtered.
This happens in practice when the same simplex ``appears at different times'', that is, when condition \ref{cond-2} in \cref{p: filtered char} is not satisfied.
Examples of this include the \textit{degree-Rips} bi-filtration (\cite{LW}), and Vietoris--Rips applied to the \textit{kernel density filtration} of \cite{rolle2020stable}.

\section{Interleaving in \(\Ho\left(\SS^{\RR^m}\right)\) and in homotopy groups}
\label{homotopy-category-and-homotopy-groups}

In this section, we prove \cref{homotopy-groups-to-noncoherent}.
We start by defining the notions of persistent homotopy groups of a persistent space, and of
morphism inducing an interleaving in persistent homotopy groups.
The notion of persistent homotopy group we use is essentially
the same as that of Jardine (\cite{Jardine2020}).

We model the \(n\)th homotopy group \(\pi_n(W,w)\) of a pointed space \((W,w)\) by the set of pointed homotopy classes of pointed maps from the \(n\)th dimensional sphere \(S^n\) into \(W\).

\begin{defn}
    Let \(X \in \pSm\).
    The persistent set \(\pi_0(X) \colon \RR^m \to \Set\) is defined by \(\pi_0 \circ X\).
    Let \(n \geq 1\), \(r \in \RR^m\), and \(x \in X(r)\).
    The \define{\(n\)th persistent homotopy group} of \(X\) based at \(x\) is the persistent
    group \(\pi_n(X,x) \colon \RR^m \to \Grp\) that is trivial at \(s \ngeq r\),
    and that is \(\pi_n(X(s),\phi^X_{r,s}(x)) \in \Grp\) at \(s \geq r\).
 \end{defn}

Note that \(\pi_n\) is functorial for every \(n \in \N\).

\begin{defn}
\label{interleaving-in-homotopy-groups}
Let \(\epsilon,\delta \geq 0 \in \RR^{m}\).
Assume given a homotopy class of morphisms \([f] \colon X' \to Y'^\epsilon \in \Ho\left(\SS^{\RR^m}\right)\).
Let \(X' \simeq X\) be a cofibrant replacement, let \(Y' \simeq Y\) be a fibrant replacement,
and let \(f \colon X \to_\epsilon Y\) be a representative of \(f\).
We say that \([f]\) \define{induces an \((\epsilon,\delta)\)-interleaving in homotopy groups} if the induced map \(\pi_0(f) \colon \pi_0(X) \to_\epsilon \pi_0(Y)\) is part of an \((\epsilon,\delta)\)-interleaving
of persistent sets, and if for every \(r \in \RR^m\), every \(x \in X(r)\), and every \(n \geq 1 \in \N\),
the induced map
\(\pi_n(f) : \pi_n(X,x) \to_\epsilon \pi_n(Y,f(x))\) is part of an \((\epsilon,\delta)\)-interleaving
of persistent groups.
\end{defn}

It is clear that the definition above is independent of the choices of representatives.

A standard result in classical homotopy theory is that a fibration of Kan complexes
inducing an isomorphism in all homotopy groups has the right lifting property with respect
to cofibrations (\cite[Theorem~I.7.10]{GJ}).
An analogous, persistent, result
(\cref{lifting-property-n-cofibrations}), says that, for a fibration of fibrant objects
inducing a \(\delta\)-interleaving in homotopy groups, the lift exists up to a shift, which depends on both
\(\delta\) and on a certain ``length'' \(n \in \N\) associated to the cofibration.
To make this precise, we introduce the notion of \(n\)-dimensional extension.

\begin{defn}
\label{def n cofibration}
    Let \(A,B \in \pSm\) and let \(n \in \N\).
    A map \(j\colon A \to B\) is a \define{\(n\)-dimensional extension} (of \(A\)) if there exists a set \(I\), a family of tuples of real numbers
    \(\left\{r_i \in \RR^m\right\}_{i \in I}\), and commutative squares of the form depicted on the left below, that together give rise to the pushout square on the right below.
Here, \(\partial D^n \hookrightarrow D^n\) stands for \(S^{n-1}\hookrightarrow D^n\) if \(\ \SS = \Top\), and for \(\partial \Delta^n \hookrightarrow \Delta^n\) if  \(\ \SS = \sSet\).
    \[\begin{tikzcd}
     \partial D^n \ar[r,"f_i"] \ar[d,hook] &  A(r_i) \ar[d,"j_{r_i}"] & & & & \coprod_{i \in I} r_i \odot(\partial D^n) \ar[r,"f"] \ar[d] &  A \ar[d,"j"]\\
     D^n \ar[r,"g_i"] & B(r_i) & & & &  \coprod_i r_i \odot( D^n) \ar[r,"g"] & B
    \end{tikzcd}\]  
\end{defn}

A \define{single dimensional extension} is an \(n\)-dimensional extension for some \(n\in \N\).

\begin{defn}
Let \(\iota \colon A \to B\) be a projective cofibration of \(\pSm\) and let \(n \in \N\).
We say that \(\iota\) is an \define{\(n\)-cofibration} if it factors as the composite
of \(n+1\) maps \(f_0, \dots, f_n\), with \(f_i\) an \(n_i\)-dimensional extension for some \(n_i \in \N\).
We say that \(A\in \pSm\) is \(n\)-cofibrant if the map \(\emptyset \to A\) is an \(n\)-cofibration.
\end{defn}

The next lemma, which follows directly from \cref{t: cofibrant = filtered}, gives a rich family of examples of \(n\)-cofibrant persistent simplicial sets.
Recall that a simplicial set is \(n\)-skeletal if all its simplices in dimensions above \(n\) are degenerate.

\begin{lem}
    \label{sset-n-cofibrant}
    Let \(A \in \sSet^{\RR^m}\) and let \(n \in \N\).
    If \(A\) is projective cofibrant and pointwise \(n\)-skeletal, then it is \(n\)-cofibrant.\qed
\end{lem}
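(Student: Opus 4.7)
The plan is to combine \cref{t: cofibrant = filtered} with a standard skeletal filtration argument. Since $A$ is projective cofibrant, \cref{t: cofibrant = filtered} lets us assume $A = \widehat{(Y,\beta)}$ for a filtered simplicial set $(Y,\beta)$ with $Y = \bigcup_{r \in \RR^m} A(r)$. The pointwise $n$-skeletal hypothesis forces every nondegenerate simplex of $Y$ to have dimension at most $n$: a nondegenerate $k$-simplex $\sigma \in Y_k$ with $k > n$ would witness $A(\beta_k(\sigma))$ being non-$n$-skeletal. Thus $Y$ itself is $n$-skeletal.

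For each $k \in \{-1, 0, \ldots, n\}$, let $A^{(k)} \in \sSet^{\RR^m}$ be the persistent sub-simplicial set of $A$ with
\[
    A^{(k)}(r) = \text{subsimplicial set of } A(r) \text{ generated by } \{\sigma \in Y : \dim \sigma \leq k,\ \beta(\sigma) \leq r\},
\]
with the convention $A^{(-1)} := \emptyset$. By \cref{p: filtered char} and the compatibility of $\beta$ with faces and degeneracies, each $A^{(k)}$ is well-defined and filtered. Since $Y$ is $n$-skeletal we have $A^{(n)} = A$, so the inclusions yield a factorization
\[
    \emptyset = A^{(-1)} \hookrightarrow A^{(0)} \hookrightarrow A^{(1)} \hookrightarrow \cdots \hookrightarrow A^{(n)} = A
\]
of length $n+1$, matching the counting in the definition of $n$-cofibration.

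The remaining step is to show that each inclusion $A^{(k-1)} \hookrightarrow A^{(k)}$ is a $k$-dimensional extension. Let $I_k$ be the set of nondegenerate $k$-simplices of $Y$. For each $\sigma \in I_k$, its boundary consists of simplices of dimension at most $k-1$ with filtration value at most $\beta_k(\sigma)$ (by the face compatibility of $\beta$), so the attaching map $\partial\Delta^k \to A^{(k-1)}(\beta_k(\sigma))$ and the characteristic map $\Delta^k \to A^{(k)}(\beta_k(\sigma))$ are well-defined. These assemble into a square
\[
\begin{tikzcd}
\coprod_{\sigma \in I_k} \beta_k(\sigma) \odot \partial\Delta^k \ar[r] \ar[d] & A^{(k-1)} \ar[d] \\
\coprod_{\sigma \in I_k} \beta_k(\sigma) \odot \Delta^k \ar[r] & A^{(k)}
\end{tikzcd}
\]
which I will verify is a pushout by checking pointwise: for each $r \in \RR^m$, $A^{(k)}(r)$ is obtained from $A^{(k-1)}(r)$ by freely attaching the nondegenerate $k$-cells $\sigma$ with $\beta_k(\sigma) \leq r$, which is precisely the value at $r$ of the claimed pushout (using that $(\beta_k(\sigma) \odot X)(r) = X$ iff $\beta_k(\sigma) \leq r$ and is empty otherwise).

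The verifications are routine once the setup is in place; the only thing to be careful about is that the skeletal filtration interacts correctly with the persistence parameter, which is exactly guaranteed by the compatibility axioms defining a filtered simplicial set.
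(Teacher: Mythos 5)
Your argument is correct and is precisely the argument the paper leaves implicit: the lemma is stated as following directly from \cref{t: cofibrant = filtered}, with the intended justification being exactly the skeletal filtration by the degree function $\beta$ and the pointwise pushout check you carry out. No gaps; your care about nondegeneracy being detected in each $A(r)$ and about the attaching maps landing in $A^{(k-1)}(\beta_k(\sigma))$ covers the only points that needed verification.
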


\begin{eg}
The Vietoris--Rips complex \(\VR(X)\) of a metric space \(X\), as defined in \cref{VR-complex example}, is \(n\)-cofibrant if the underlying set of \(X\) has finite cardinality \(\vert X \vert = n + 1\).

If one is interested in persistent (co)homology of some bounded degree \(n\), then one can restrict computations to the \((n+1)\)-skeleton of a Vietoris--Rips complex,
which is \((n+1)\)-cofibrant.
\end{eg}

A result analogous to \cref{sset-n-cofibrant}, but for persistent topological spaces, does not hold, as cells are not necessarily attached in order of dimension.
This motivates the following definition.

\begin{defn}
\label{persistent CW}
    Let \(n\in \N\).
    A persistent topological space \(A \in \Top^{\RR^m}\) is an
    \define{\(n\)-dimensional persistent CW-complex} if
    the map \(\emptyset \to X\) can be factored as a composite
    of maps \(f_0, \dots, f_n\), with \(f_i\) an \(i\)-dimensional extension.
\end{defn}

\begin{eg}
    The geometric realization of any \(n\)-cofibrant
    persistent simplicial set is an \(n\)-dimensional persistent CW-complex.
\end{eg}

\begin{lem}
    \label{cw-n-cofibrant}
    Every \(n\)-dimensional persistent CW-complex is \(n\)-cofibrant.\qed
\end{lem}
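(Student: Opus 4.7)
The proof will essentially be an unwinding of definitions, so I will be brief. The plan is to show that the hypothesized factorization witnessing $X$ as an $n$-dimensional persistent CW-complex already serves, verbatim, as a witness that $\emptyset \to X$ is an $n$-cofibration, once we check that dimensional extensions are projective cofibrations.

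By \cref{persistent CW}, there is a factorization of $\emptyset \to X$ as a composite of $n+1$ maps $f_0, \dots, f_n$, with $f_i$ an $i$-dimensional extension. In particular, each $f_i$ is a single dimensional extension in the sense introduced just after \cref{def n cofibration}. Hence, to conclude that $\emptyset \to X$ is an $n$-cofibration, it remains only to verify that this map is a projective cofibration.

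For this, I would observe that the generating cofibrations of the projective model structure on $\Top^{\RR^m}$ are exactly the maps of the form $r \odot (\partial D^n) \hookrightarrow r \odot D^n$ for $r \in \RR^m$ and $n \in \N$, as recalled in \cref{background-and-conventions}. An $n$-dimensional extension is, by definition, a pushout of a coproduct of such generating cofibrations, hence is itself a projective cofibration. Since projective cofibrations are closed under composition, the composite $\emptyset \to X = f_n \circ \cdots \circ f_0$ is a projective cofibration, completing the verification.

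The main step, if there is one, is the identification of $n$-dimensional extensions as projective cofibrations; but this is immediate from the explicit description of the generating cofibrations. No real obstacle is anticipated.
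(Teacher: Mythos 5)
Your proposal is correct and matches the paper's treatment: the paper states this lemma with no proof (marked \qed as immediate), and the intended argument is exactly your definitional unwinding — the CW factorization already supplies the required single dimensional extensions, and each dimensional extension is a pushout of a coproduct of generating projective cofibrations \(r\odot\partial D^n \to r\odot D^n\), so the composite is a projective cofibration.
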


We now make precise the notion of lifting property up to a shift.

\begin{defn}
    Let \(i \colon A \to B\) and \(p \colon Y \to X\) be morphisms in \(\pSm\) and let \(\delta \geq 0\).
    We say that \(p\) has the \define{right \(\delta\)-lifting property} with respect to \(i\)
    if for all morphisms \(A \to Y\) and \(B \to X\) making the square on the left below commute, there exists
    a diagonal \(\delta\)-morphism \(f \colon B \to_\delta Y\) rendering the diagram commutative.
    Below, the diagram on the left is shorthand for the one on the right.
    \[
    \begin{tikzpicture}
      \matrix (m) [matrix of math nodes,row sep=3em,column sep=3em,minimum width=2em,nodes={text height=1.75ex,text depth=0.25ex}]
        { A & Y  & & & A & Y & Y^\delta\\
          B & X & & & B & X & X^\delta.\\};
      \path[line width=0.75pt, -{>[width=8pt]}]
        (m-1-1) edge node [above] {} (m-1-2)
                edge node [left] {$i$} (m-2-1)
        (m-2-1) edge [dotted] node [above] {} node [at end, below] {$\leng{\delta}\;\;\,$} (m-1-2)
                edge node [above] {} (m-2-2)
        (m-1-2) edge node [right] {$p$} (m-2-2)

        (m-1-5) edge node [above] {} (m-1-6)
                edge node [left] {$i$} (m-2-5)
         (m-1-6) edge node [right] {$p$} (m-2-6)
                edge node [above] {$\shi_{0,\delta}(\id_Y)$} (m-1-7)
        (m-1-7) edge node [right] {$p^\delta$} (m-2-7)
        (m-2-5) edge [bend left=10,-,line width=6pt,draw=white] (m-1-7)
                 edge [bend left=10, dotted] node [left] {$\,\,\,\,\,\,$} (m-1-7)
                 edge node [above] {} (m-2-6)
        (m-2-6) edge node [below] {$\shi_{0,\delta}(\id_X)$} (m-2-7)
        ;
        
    \end{tikzpicture}\]

\end{defn}

We now prove \cref{jardines-lemma}, an adaptation of a result of Jardine, which says that fibrations inducing interleavings in homotopy groups have a shifted right lifting property, as defined above.
The main difference is that we work in the multi-persistent setting.
We use simplicial notation and observe that the corresponding statement for persistent topological spaces follows from the simplicial one by using the singular complex-realization adjunction.
We recall a standard, technical lemma whose proof is given within that of, e.g., \cite[Theorem~I.7.10]{GJ}.
\begin{lem}
\label{homotopy-lifting}
Suppose given a commutative square of simplicial sets
\begin{equation}
\label{lp2}
\begin{tikzcd}
\partial \Delta^n \ar[r,"\alpha"] \ar[d,hook] & X \ar[d,"p"]\\
\Delta^n \ar[r,"\beta"] &Y,
\end{tikzcd}
\end{equation}	
where \(p\) is a Kan fibration between Kan complexes. If there is commutative diagram like the one on the left below, for which the lifting problem on the right admits a solution, then the initial square \eqref{lp2} admits a solution.
\[\begin{tikzcd}
\partial \Delta^n \ar[ddd,hook] \ar[dr,"(\id_{\partial \Delta^n} \times \{1\}) "] \ar[drrr,"\alpha", bend left]&&\\
& \partial \Delta^n \times \Delta^1	\ar[d,hook]	\ar[rr," h"] & & X \ar[d,"p"] & & & \partial \Delta^n \ar[rr,"h \circ (\id_{\partial \Delta^n} \times \{0\}) "] \ar[d,hook] & & X \ar[d,"p"]\\
&\Delta^n \times \Delta^1 \ar[rr,"g"] & & Y & & & \Delta^n \ar[rr,"g \circ( \id_{\Delta^n} \times \{0\})"] &&Y\\
\Delta^n \ar[ur,"(\id_{\Delta^n} \times \{1\}) "{swap}] \ar[urrr,"\beta"{swap},bend right] &&
\end{tikzcd}\]
\end{lem}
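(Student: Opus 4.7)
The plan is to lift the square \eqref{lp2} by first solving the hypothesized ``time $0$'' lifting problem and then transporting the resulting partial lift to ``time $1$'' using the standard homotopy lifting property of Kan fibrations.

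First, I would apply the hypothesis to obtain a map $\gamma_0 \colon \Delta^n \to X$ making the right-hand square commute, i.e., satisfying $p \circ \gamma_0 = g \circ (\id_{\Delta^n} \times \{0\})$ and $\gamma_0|_{\partial \Delta^n} = h \circ (\id_{\partial \Delta^n} \times \{0\})$. Using this compatibility, I would glue $\gamma_0$ and $h$ along $\partial \Delta^n \times \{0\}$ to produce a single well-defined map
\[
    \widetilde{h} \colon (\Delta^n \times \{0\}) \cup (\partial \Delta^n \times \Delta^1) \longrightarrow X,
\]
whose composite with $p$ agrees with the restriction of $g$ to the same subcomplex.

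Next, I would invoke the standard fact that the inclusion
\[
    (\Delta^n \times \{0\}) \cup (\partial \Delta^n \times \Delta^1) \hookrightarrow \Delta^n \times \Delta^1
\]
is an anodyne extension (it can be written as an iterated pushout of horn inclusions; see e.g.\ the material surrounding \cite[Theorem~I.7.10]{GJ}). Since $p$ is a Kan fibration it has the right lifting property with respect to every anodyne extension, so $\widetilde{h}$ and $g$ fit together to yield a diagonal filler $H \colon \Delta^n \times \Delta^1 \to X$ with $p \circ H = g$ and $H$ extending $\widetilde{h}$.

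Finally, I would define $\gamma := H \circ (\id_{\Delta^n} \times \{1\}) \colon \Delta^n \to X$. By construction $p \circ \gamma = g \circ (\id_{\Delta^n} \times \{1\}) = \beta$, and $\gamma|_{\partial \Delta^n} = h \circ (\id_{\partial \Delta^n} \times \{1\}) = \alpha$, so $\gamma$ is the desired solution of \eqref{lp2}. The only non-routine ingredient is the anodyneness of the ``past-plus-tube'' inclusion above, which is entirely classical; once that is in hand, the proof reduces to a single application of the Kan fibration lifting property followed by restriction to a face. The Kan complex hypothesis on $X$ and $Y$ is not explicitly used in the lifting argument itself, but it is what makes the surrounding usage of the lemma meaningful in \cite[Theorem~I.7.10]{GJ}.
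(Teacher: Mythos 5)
Your proof is correct and is essentially the standard argument the paper defers to (it cites the proof inside \cite[Theorem~I.7.10]{GJ}): glue the solved time-$0$ problem with $h$ along $\partial\Delta^n\times\{0\}$, lift over the prism using the anodyne extension $(\Delta^n\times\{0\})\cup(\partial\Delta^n\times\Delta^1)\hookrightarrow\Delta^n\times\Delta^1$, and restrict to time $1$. Your side remark is also accurate: only the Kan fibration hypothesis on $p$ is used, the Kan complex assumption being relevant only to how the lemma is applied later.
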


\begin{lem}[cf.~{\cite[Lemma~14]{Jardine2020}}]
    \label{jardines-lemma}
    Let \(\delta \geq 0\), and let \(f \colon X \to Y \in \SS^{\RR^m}\) induce a \((0,\delta)\)-interleaving in homotopy groups.
    If \(X\) and \(Y\) are projective fibrant and \(f\) is a projective fibration, then \(f\) has the right
    \(2\delta\)-lifting property with respect to boundary inclusions \(r\odot \partial D^n \to r \odot D^n\),
    for every \(r \in \RR^m\) and every \(n \in \N\).
\end{lem}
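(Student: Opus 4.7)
The plan is to adapt Jardine's proof of the one-parameter case \cite[Lemma~14]{Jardine2020}; the multi-persistent setting adds essentially no new difficulty, because the boundary inclusion $r \odot \partial D^n \hookrightarrow r \odot D^n$ is concentrated at the single index $r \in \RR^m$. By the adjunction defining $\delta$-morphisms, a $2\delta$-diagonal lift is the same data as a classical lift of $\phi^Y_{r, r+2\delta} \circ \beta$ along $p_{r+2\delta}$ extending $\phi^X_{r, r+2\delta} \circ \alpha$. Thus everything reduces to a classical lifting problem between the Kan complexes $X(r+2\delta)$ and $Y(r+2\delta)$. I will treat the simplicial case; the topological case follows by applying the singular-realization adjunction, which commutes with shifts by \cref{isometry-quillen-equiv}.

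The base case $n = 0$ uses only the $(0,\delta)$-interleaving of $\pi_0$. Given the vertex $\beta \colon \Delta^0 \to Y(r)$, this interleaving supplies a class in $\pi_0(X(r+\delta))$ mapping to the shift of $[\beta]$ in $\pi_0(Y(r+\delta))$; pick a representative $x$, a path from $f_{r+\delta}(x)$ to $\phi^Y_{r, r+\delta}(\beta)$ in $Y(r+\delta)$, and lift this path through the pointwise Kan fibration $p_{r+\delta}$ to a vertex $x'$ with $f(x') = \phi^Y_{r, r+\delta}(\beta)$; composing with $\phi^X_{r+\delta, r+2\delta}$ yields the desired lift, with one shift of slack unused.

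For $n \geq 1$, I would apply the homotopy-lifting lemma \cref{homotopy-lifting} to the shifted square at level $r+2\delta$: it suffices to construct there compatible homotopies $(h,g)$ whose restrictions at time $0$ admit a strict lift. These are built in two stages. First, since $f_r \circ \alpha$ extends over $\Delta^n$ via $\beta$, the class $[\alpha] \in \pi_{n-1}(X(r),\ast)$, at a suitable basepoint coming from a vertex in the image of $\alpha$, maps to $0 \in \pi_{n-1}(Y(r), f(\ast))$; by the $(0,\delta)$-interleaving on $\pi_{n-1}$, its shift to $\pi_{n-1}(X(r+\delta),\ast)$ already vanishes, producing an extension $\tilde\alpha \colon \Delta^n \to X(r+\delta)$ of $\phi^X_{r, r+\delta} \circ \alpha$. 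Gluing $f_{r+\delta} \circ \tilde\alpha$ with $\phi^Y_{r, r+\delta} \circ \beta$ along their common boundary yields an $n$-sphere in $Y(r+\delta)$ and hence a class $[\gamma] \in \pi_n(Y(r+\delta))$. The $(0,\delta)$-interleaving on $\pi_n$ then lifts the shift of $[\gamma]$ to a class in $\pi_n(X(r+2\delta))$; combining this lift with the shift of $\tilde\alpha$ and the fibration property of $p$ produces a compatible pair $(h,g)$ to which \cref{homotopy-lifting} applies, delivering the sought lift at level $r+2\delta$.

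The principal obstacle is the bookkeeping required to realize the two obstruction-killing steps compatibly over the fibration $p$, with all basepoints and shifts aligned, and to package them into homotopies satisfying the compatibility condition $p_{r+2\delta} \circ h = g \circ (i \times \id)$. The factor $2\delta$ is intrinsic to this strategy: one copy of $\delta$ is spent killing the $(n-1)$-dimensional obstruction to extending $\alpha$, and a second copy is spent killing the $n$-dimensional obstruction measuring the mismatch between the constructed extension and the given filling $\beta$.
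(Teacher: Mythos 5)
Your proposal is correct and follows essentially the same route as the paper's proof: reduce pointwise to a simplicial lifting problem at a single index (with the topological case via the singular--realization adjunction), invoke \cref{homotopy-lifting}, spend one \(\delta\) via the ``injectivity'' half of the interleaving on \(\pi_{n-1}\) to extend the shift of \(\alpha\) over the disc, and a second \(\delta\) via the ``surjectivity'' half on \(\pi_n\) together with the pointwise Kan fibration property to conclude, giving the same \(2\delta\) accounting. The only difference is organizational: you measure the mismatch with \(\beta\) by a difference class in \(\pi_n(Y(r+\delta))\) and correct \(\tilde\alpha\), whereas the paper first normalizes \(\alpha\) (horn contraction at no shift, then reduction to constant boundary) before applying the surjectivity step -- the obstruction-theoretic content is the same.
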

\begin{proof}
Suppose given a commutative diagram as on the left below, which corresponds to the one on the right:
\begin{equation}
\label{lifting problem}
\begin{tikzcd}
r\odot \partial \Delta^n \ar[r,"a"] \ar[d] & X \ar[d,"p"] & & & \partial \Delta^n \ar[r,"\alpha"] \ar[d] & X(r) \ar[d,"p_r"]\\
r\odot  \Delta^n \ar[r,"b"] & Y & & & \Delta^n \ar[r,"\beta"] &Y(r).
\end{tikzcd}
\end{equation}
We must find a \(2\delta\)-lift for the diagram on the right.
The proof strategy is to appeal to \cref{homotopy-lifting} to simplify \(\alpha\), then prove that at the cost of a \(\delta\)-shift we can further reduce \(\alpha\) to a constant map, and then show that the simplified lifting problem can be solved at the cost of another \(\delta\)-shift. So we end up with a \(2\delta\)-lift, as in the statement. 
We proceed by proving the claims in opposite order.

We start by showing that 
\eqref{lifting problem} can be solved up to a \(\delta\)-shift whenever \(\alpha\) is constant.
Let us assume that \(\alpha\) is of the form \(\alpha = \ast\) for some \(\ast \in X(r)_0\). Since, then, \(\beta\) represents an element \([ \beta] \in \pi_n(Y(r),\ast)\), there exists a map \(\alpha' \colon \Delta^n \to X(r+\delta)\) whose restriction to \(\partial \Delta^n\) is constant on \(\ast \in X(r)_0\), and such that there is a homotopy \(h\colon \beta   \simeq p\alpha'\) relative to \(\partial\Delta^n\). We can thus consider
\[\begin{tikzcd}
\partial \Delta^n \ar[r, "\id_{\partial \Delta^n}\times\{1\}"] \ar[d,"i"] & \left(\partial \Delta^n \times \Delta^1) \cup (\Delta^n \times \{0\}\right) \ar[d] \ar[r,"(\ast {,} \alpha')"] & X(r+\delta) \ar[d,"p_{r+\delta}"]\\
\Delta^n  \ar[r, "\id_{\Delta^n}\times\{1\}"] &\Delta^n \times \Delta^1 \ar[r,"h"] \ar[ur, "H",dotted] & Y(r+\delta),
\end{tikzcd}\]
where \(H\) is a diagonal filler for the right-hand side square, which exists since the middle vertical map is a trivial cofibration of simplicial sets and \(p_{r+\delta}\) is a Kan fibration by assumption. The composite map \(H\circ \id_{\Delta^n}\times\{1\}\) is a lift for \eqref{lifting problem}. 

We now assume that \(\alpha\) is of a specific, simplified form, and prove that, up to a \(\delta\)-shift, we can reduce the lifting problem \eqref{lifting problem} to the case in which \(\alpha\) is constant.
Let us assume that \(d_i(\alpha) = \ast \in X(r)_0\) for every \(0<i\leq n\), and set \(\alpha_{0} = d_0(\alpha)\). We have that \(\alpha_0\) represents an element \([\alpha_0] \in \pi_{n-1}(X(r),\ast)\), with the property that \(p[\alpha_0] = 0  \in \pi_{n-1}(Y(r),\ast)\). Since \(p\) induces a \((0,\delta)\)-interleaving in homotopy groups, we have that \(\phi^X_{r,r+\delta}([\alpha_0])=0 \in \pi_{n-1}(X(r + \delta),\ast)\), witnessed by a homotopy \(h_0 \colon \Delta^{n-1} \times \Delta^1 \to X(r+\delta)\), constant on \(\partial \Delta^{n-1}\). If we set \(h'_i = \ast \colon \Delta^{n-1} \to X(r+\delta)\) for every \(0<i\leq n-1\) and \(h'_0 = h_0\), we get a map \(h'\colon\partial \Delta^n \times \Delta^1 \to X(r + \delta)\). We can now extend \((\phi^Y_{r,r+\delta}\circ \beta{,}ph') \colon ( \Delta^n \times \{1\}) \cup (\partial \Delta^n \times \Delta^1) \to  Y(r + \delta) \) to a homotopy \(H'\colon \Delta^n \times \Delta^1 \to Y(r + \delta) \).
Now observe that the following lifting problem is such that \(h' \circ (\id_{\partial \Delta^n} \times \{0\} )= *\), so, thanks to \cref{homotopy-lifting},
we have reduced this case to the case in which \(\alpha\) is constant.
\[\begin{tikzcd}
\partial \Delta^n \ar[rr,"h'_1 \circ (\id_{\partial \Delta^n} \times \{0\})"] \ar[d] & & X(r+\delta) \ar[d,"p_{r+\delta}"]\\
\Delta^n \ar[rr,"H' \circ (\id_{\Delta^n} \times \{0\}"] && Y(r + \delta).	
\end{tikzcd}\]

To conclude, we must show that we can reduce the original lifting problem \eqref{lifting problem} to one in which all but the \(0\)th faces of \(\alpha\) are constant on a point \(\ast \in X(r)_0\). Let \(K\colon \Lambda^n_0 \times \Delta^1 \to \Lambda^n_0\) be the homotopy that contracts the simplicial horn onto its vertex \(0\), which determines a diagram
\[\begin{tikzcd}
    \Lambda^n_0 \ar[r,"\id_{\Lambda^n_0}\times \{1\}"] \ar[d,hook] & \Lambda^n_0 \times \Delta^1 \ar[d,"k_1"] & \Lambda^n_0 \ar[l,"\id_{\Lambda^n_0}\times \{0\}"{swap}] \ar[d]\\
    \partial \Delta^n \ar[r,"\alpha"] & X & \Delta^0 \ar[l,"\alpha(0)"{swap}]
\end{tikzcd}\]
with \(k_1 = \alpha\circ j \circ K\), with \(j \colon \Lambda_0^n \to \partial \Delta^n\) the inclusion of the horn into the boundary. We can now extend the map \((\alpha{,}k_1) \colon \left(\partial \Delta^n \times \{1\}\right) \cup \left(\Lambda^n_0 \times \Delta^1\right) \to X(r)\) to a homotopy \(k\colon \partial \Delta^n \times \Delta^1  \to X(r) \). Similarly, we extend the map \((\beta{,}p \circ k) \colon \left( \Delta^n \times \{1\}\right) \cup \left(\partial \Delta^n \times \Delta^1\right) \to Y(r)\) to a homotopy \(g\colon \Delta^n \times \Delta^1 \to Y(r)\).
It now suffices to consider the diagram
\[\begin{tikzcd}
\partial \Delta^n \ar[ddd,hook] \ar[dr,"(\id_{\partial \Delta^n} \times \{1\}) "] \ar[drrr,"\alpha", bend left]&&\\
& \partial \Delta^n \times \Delta^1 \ar[d,hook] \ar[rr," k"] & & X \ar[d,"p"] & \\
&\Delta^n \times \Delta^1 \ar[rr,"g"] & & Y \\
\Delta^n \ar[ur,"(\id_{\Delta^n} \times \{1\}) "{swap}] \ar[urrr,"\beta"{swap},bend right] &&
\end{tikzcd}\]
observing that \(\alpha' := k_{\vert \partial\Delta^n \times \{0\}}\) satisfies \(d_i(\alpha' )= \ast\) for \(0<i\leq n\), and appeal to \cref{homotopy-lifting}.
\end{proof}

\begin{cor}
    \label{lifting-property-n-cofibrations}
    Let \(\delta \geq 0\) and let \(f \colon X \to_\delta Y\) induce a \(\delta\)-interleaving in homotopy groups.
    If \(X\) and \(Y\) are projective fibrant and \(f\) is a projective fibration, then \(f\) has the right
    \((4(n+1)\delta)\)-lifting property with respect to \(n\)-cofibrations, for all \(n \in \N\).
\end{cor}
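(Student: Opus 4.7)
The plan is to deduce this from \cref{jardines-lemma} in two stages: first convert the corollary's hypothesis into a form that the lemma digests, obtaining the right $4\delta$-lifting property against the boundary inclusions $r \odot \partial D^n \to r \odot D^n$; then propagate this lifting property to single dimensional extensions and to composites of $n+1$ such extensions. To carry out the conversion, I would view the $\delta$-morphism $f \colon X \to_\delta Y$ as an honest morphism $\widetilde{f} \colon X \to Y^\delta$ in $\SS^{\RR^m}$. The given $\delta$-interleaving in homotopy groups supplies, alongside $\pi_n(f)$, a $\delta$-morphism $g \colon \pi_n(Y) \to_\delta \pi_n(X)$, and shifting yields $g^\delta \colon \pi_n(Y^\delta) \to_{2\delta} \pi_n(X)$. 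A direct check shows that $\pi_n(\widetilde{f})$ and $g^\delta$ together realize a $(0,2\delta)$-interleaving between $\pi_n(X)$ and $\pi_n(Y^\delta)$. Since the shift functor preserves projective fibrant objects and projective fibrations, \cref{jardines-lemma} applies to $\widetilde{f}$ with $\delta$ replaced by $2\delta$, yielding the right $4\delta$-lifting property against every boundary inclusion.

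The extension of this lifting property from generating cells to single dimensional extensions is purely formal: by \cref{def n cofibration}, a single dimensional extension is a pushout of a coproduct of boundary inclusions, so cell-wise $4\delta$-lifts assemble into a single $4\delta$-lift via the universal properties of coproduct and pushout. With this in hand, I would finish by induction along a composite $A = A_0 \to A_1 \to \cdots \to A_{n+1} = B$ witnessing an $n$-cofibration. At step $i+1$, apply the right $4\delta$-lifting property of the shifted fibration $\widetilde{f}^{4i\delta}$ --- which holds by a formal translation of the corresponding property of $\widetilde{f}$ under the $r \odot (-)$ adjunction --- to the single dimensional extension $A_i \to A_{i+1}$, taking as input the square whose top edge is the previously constructed lift $h_i \colon A_i \to X^{4i\delta}$ and whose bottom edge is the appropriate shift of the given map $B \to Y^\delta$. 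After $n+1$ iterations, the resulting $h_{n+1} \colon B \to X^{4(n+1)\delta}$ is the desired $(4(n+1)\delta)$-lift.

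The main bookkeeping subtlety is the origin of the constant $4$: one factor of $2$ comes from absorbing the $\delta$-gap of $f$ into the target to form $\widetilde{f}$, which converts the symmetric $(\delta,\delta)$-interleaving in homotopy groups into an asymmetric $(0,2\delta)$-interleaving of $\pi_n(X)$ with $\pi_n(Y^\delta)$; the other factor of $2$ is the one already present in \cref{jardines-lemma}. Verifying at each inductive step that the square to which the shifted lifting property is applied actually commutes, and that the successive lifts glue correctly along the composite, is routine, relying only on the standard compatibilities between shifts, pushouts, and coproducts in $\SS^{\RR^m}$.
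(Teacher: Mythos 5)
Your proposal is correct and follows essentially the same route as the paper: reinterpret the hypothesis as a \((0,2\delta)\)-interleaving between \(\pi_n(X)\) and \(\pi_n(Y^\delta)\), apply \cref{jardines-lemma} to get the right \(4\delta\)-lifting property against boundary inclusions, pass formally to single dimensional extensions via coproducts and pushouts, and compose \(n+1\) times to accumulate the constant \(4(n+1)\delta\). The only cosmetic difference is that you shift the fibration at each inductive step while the paper instead notes that shifts of single dimensional extensions are again single dimensional extensions; these are equivalent bookkeeping devices.
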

\begin{proof}
    By assumption, \(f \colon X \to Y^\delta\) induces a \((0,2\delta)\)-interleaving in all homotopy groups.
    Now, an \(n\)-cofibration can be written as a composite of \(n+1\) single dimensional extensions, and any shift
    of a single dimensional extension is again a single dimensional extension, so it is enough
    to show that \(f\) has the right \(4\delta\)-lifting property with respect to single dimensional extensions.

    A single dimensional extension is the pushout of a coproduct
    \(\coprod_{i \in I} r_i \odot(\partial D^n) \to \coprod_{i \in I} r_i \odot D^n\), so it is enough
    to show that \(f\) has the right \(4\delta\)-lifting property with respect to coproducts of that form,
    which follows from \cref{jardines-lemma} and the universal property of coproducts.
\end{proof}

We are ready to prove \cref{homotopy-groups-to-noncoherent}.

\begingroup
\def\thetheorem{\cref{homotopy-groups-to-noncoherent}}
\begin{thmx}
    Let \(X,Y \in \SS^{\RR^m}\) be persistent spaces that are assumed to be
    projective cofibrant and \(d\)-skeletal if \(\SS = \sSet\), or persistent CW-complexes
    of dimension at most \(d\) if \(\SS = \Top\).
    Let \(\delta \geq 0 \in \RR^m\).
    If there exists a morphism in the homotopy category \(X \to Y^\delta \in \Ho\left(\SS^{\RR^m}\right)\)
    that induces \(\delta\)-interleavings in all homotopy groups,
    then \(X\) and \(Y\) are \((4(d+1)\delta)\)-interleaved in the homotopy category.
\end{thmx}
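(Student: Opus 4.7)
The strategy is a persistent version of the classical Whitehead theorem: I would rectify the class $[f]$ to a strict projective fibration and use the shifted lifting property \cref{lifting-property-n-cofibrations} to construct a partial inverse that will serve as the other half of the desired interleaving. To reduce to a strict setting, I would fibrantly replace $Y$ and factor a strict representative of $[f]$ as a trivial cofibration followed by a fibration, arranging so that $f \colon X \to Y^\delta$ is a projective fibration between projective fibrant objects. These operations preserve the hypothesis that $f$ induces $\delta$-interleavings in all persistent homotopy groups, as well as the $d$-cofibrancy of $X$ and $Y$ guaranteed by \cref{sset-n-cofibrant} and \cref{cw-n-cofibrant}.

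Next, since $\emptyset \to Y$ is a $d$-cofibration, I would apply \cref{lifting-property-n-cofibrations} to $f \colon X \to_\delta Y$ with respect to this cofibration, taking the structure map $Y \to Y^\delta$ as the bottom edge of the lifting square. This yields a $4(d+1)\delta$-morphism $G \colon Y \to_\epsilon X$, where $\epsilon := 4(d+1)\delta$, satisfying $f^\epsilon \circ G = \shi_{0, \epsilon + \delta}(\id_Y)$ strictly in $\SS^{\RR^m}$. Setting $F := \shi_{\delta, \epsilon}(f) \colon X \to Y^\epsilon$, a direct computation using this identity and the naturality of $f$ shows that $F^\epsilon \circ G = \shi_{0, 2\epsilon}(\id_Y)$ strictly, establishing one of the two axioms of an $\epsilon$-interleaving already in $\SS^{\RR^m}$, hence a fortiori in $\Ho(\SS^{\RR^m})$.

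For the other axiom $[G^\epsilon \circ F] = [\shi_{0, 2\epsilon}(\id_X)]$ in $\Ho(\SS^{\RR^m})$, I would first observe that $G^\epsilon \circ F$ and $\shi_{0, 2\epsilon}(\id_X)$, viewed as parallel maps $X \to X^{2\epsilon}$, agree strictly after post-composition with $f^{2\epsilon}$. I would then apply \cref{lifting-property-n-cofibrations} once more, now to the fibration $f^{2\epsilon} \colon X^{2\epsilon} \to Y^{2\epsilon + \delta}$, which inherits the $\delta$-interleaving hypothesis from $f$, and to a cofibration $X \amalg X \hookrightarrow \mathrm{Cyl}(X)$ arising from a cylinder object on $X$, which is a $d$-cofibration because $X$ is $d$-cofibrant. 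The resulting shifted lift produces an $\SS^{\RR^m}$-level homotopy that descends to the required equation in the homotopy category.

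The main obstacle will be the shift bookkeeping in the last step: naively, the second application of \cref{lifting-property-n-cofibrations} introduces an additional $4(d+1)\delta$ of shift, which must be carefully absorbed into structural shifts of the interleaving data so that the interleaving in $\Ho(\SS^{\RR^m})$ ends up having shift exactly $4(d+1)\delta$, rather than a larger multiple.
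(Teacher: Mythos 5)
Your overall architecture is the paper's (rectify \([f]\) to a fibration between fibrant objects, get a candidate inverse from the shifted lifting property of \cref{lifting-property-n-cofibrations} against \(\emptyset\to Y\), check one triangle identity strictly and the other by lifting a homotopy through the fibration against a cylinder cofibration), but there are two genuine gaps. First, the claim that fibrant replacement and the (trivial cofibration, fibration) factorization ``preserve the \(d\)-cofibrancy of \(X\) and \(Y\)'' is false: these constructions attach cells of arbitrary dimension, so in \(\sSet\) a fibrant replacement of a \(d\)-skeletal object is essentially never \(d\)-skeletal, and in \(\Top\) the intermediate object of the factorization is not a \(d\)-dimensional persistent CW-complex. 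Both of your applications of \cref{lifting-property-n-cofibrations} (against \(\emptyset\to Y\) and against the cylinder inclusion on \(X\)) require \(d\)-cofibrancy of the \emph{replaced} objects, so the argument as written does not go through. The paper avoids this by never replacing \(X\) and \(Y\) themselves: it chooses trivial cofibrations \(i\colon X\to X'\), \(j\colon Y\to Y'\) into a fibration \(p\colon X'\to Y'^\delta\), performs the lifts with domains built from the original \(d\)-cofibrant objects (namely \(Y^\delta\) and the cylinder on \(X\)), extends the resulting lift along the trivial cofibration \(j\) into the fibrant \(X'\), and then uses that \([i]\) and \([j]\) are isomorphisms in \(\Ho\left(\SS^{\RR^m}\right)\) to convert these identities into an interleaving between \(X'\) and \(Y'\), hence between \(X\) and \(Y\).

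Second, the shift bookkeeping you flag as ``the main obstacle'' is not a cosmetic issue: with your choice of lifting square (bottom edge the structure map \(Y\to Y^\delta\)), the lift \(G\) is a full \(\epsilon\)-morphism, \(\epsilon=4(d+1)\delta\), so the composite you must compare with the identity already carries shift \(\epsilon+\delta\) (or \(2\epsilon\) in your formulation with \(G^\epsilon\circ F\)); the second application of \cref{lifting-property-n-cofibrations}, needed to lift the (constant) homotopy through the shifted \(f\), costs another \(\epsilon\), so you only obtain the second triangle identity at shift \(2\epsilon+\delta\) (respectively \(3\epsilon\)), whereas an \(\epsilon\)-interleaving requires it at \(2\epsilon\). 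As set up, this yields only an interleaving with a strictly larger parameter, not the stated \(4(d+1)\delta\). The cure, which is precisely how the paper keeps the constant, is to make the first lift asymmetric: lift against \(\emptyset\to Y^\delta\) with bottom edge \(j^\delta\), so the resulting map \(g\) is only a \((4d+3)\delta\)-morphism; then \(g^\delta\circ p\) is an \(\epsilon\)-morphism, and the extra \(\epsilon\) coming from the homotopy-lifting step lands exactly at \(2\epsilon\), matching the slack allowed by the definition of an \(\epsilon\)-interleaving. Your first identity computation (that \(F^\epsilon\circ G=\shi_{0,2\epsilon}(\id_Y)\) strictly) is correct, but without these two repairs the proof does not establish the theorem as stated.
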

\endgroup

\begin{proof}
    By \cref{sset-n-cofibrant} and \cref{cw-n-cofibrant}, $X$ and $Y$ are $d$-cofibrant.
    Let \([f] \colon X \to Y^\delta\) be as in the statement. Since \([f]\) is a morphism in the homotopy category,
    we begin by choosing a convenient representative of it.
    We let \(p \colon X' \to Y'\) be a projective fibration between projective fibrant objects
    such that there exist trivial cofibrations \(i\colon X \to X'\) and \(j \colon Y \to Y'\)
    with \([p] \circ [i] = [j] \circ [f]\), in \(\Ho\left(\SS^{\RR^m}\right)\).

    Note that \([p]\) induces a \((0,2\delta)\)-interleaving in homotopy groups, between \(X'\) and \(Y'^\delta\).
    Since \(Y^{\delta}\) is \(d\)-cofibrant, \cref{lifting-property-n-cofibrations} guarantees that we can find a \((4(d+1)\delta)\)-lift \(g'\) of \(p\) against \(\emptyset \to Y\). We can then use the fact that \(j : Y \to Y'\) is a trivial cofibration, and that \(X'\) is fibrant, to 
    construct the following lift
    \[
    \begin{tikzpicture}
      \matrix (m) [matrix of math nodes,row sep=1.5em,column sep=2em,minimum width=2em,nodes={text height=1.75ex,text depth=0.25ex}]
        { Y & & X'^{(4d + 3)\delta} \\
          Y' & & \\};
      \path[line width=0.75pt, -{>[width=8pt]}]
        (m-1-1) edge node [above] {$g'$} (m-1-3)
                edge node [left] {$j$} (m-2-1)
        (m-2-1) edge [dotted] node [below] {$g$} (m-1-3)
        ;
    \end{tikzpicture}
    \]
    We will show that $\shi_{\delta,4(d+1)\delta}([p]) \colon X' \to_{4(d+1)\delta} Y'$ and $\shi_{(4d+3)\delta, 4(d+1)\delta}([g]) \colon Y' \to_{4(d+1)\delta} X'$
    form a \((4(d+1)\delta)\)-interleaving in the homotopy category between \(X'\) and \(Y'\).

    On the one hand, note that, by construction, we have \(p^{(4d+3)\delta} \circ g \circ j = p^{(4d+3)\delta} \circ g' = j\),
    so, since \([j]\) is an isomorphism, it follows that \([p]^{(4d+3)\delta} \circ [g] = \shi_{4(d+1)\delta}([\id_{Y'}])\),
    and thus that
    \[
        \shi_{\delta,4(d+1)\delta}([p])^{4(d+1)\delta} \circ \shi_{(4d+3)\delta, 4(d+1)\delta}([g]) = \shi_{8(d+1)\delta}([\id_{Y'}]).
    \]

    On the other hand, since \(X\) is cofibrant and \(Y'\) is fibrant,
    it follows from the previous paragraph that \(p^{4(d+1)\delta} \circ g^\delta \circ p \circ i \colon X \to Y'^{(4d+5)\delta}\) is homotopic to \(p^{4(d+1)\delta}\circ \shi_{0,4(d+1)\delta}(i)\). Let \(H \colon I \times X \to Y'^{(4d+5)\delta}\) be a homotopy between these maps, which gives the following commutative diagram
\[
    \begin{tikzpicture}
      \matrix (m) [matrix of math nodes,row sep=3em,column sep=7em,minimum width=2em,nodes={text height=1.75ex,text depth=0.25ex}]
        { X \coprod X & & X'^{4(d+1)\delta} \\
          I \times X & & Y'^{(4d+5)\delta}, \\};
      \path[line width=0.75pt, -{>[width=8pt]}]
        (m-1-1) edge node [above] {$\left(\shi_{0,4(d+1)\delta}(i){,}g^\delta\circ p \circ i\right)$} (m-1-3)
                edge node [left] {$(i_0,i_1)$} (m-2-1)
        (m-2-1) edge node [above] {$H$} (m-2-3)
        (m-1-3) edge node [right] {$p^{4(d+1)\delta}$} (m-2-3)
        ;
    \end{tikzpicture}
    \]
    where the left vertical map is the inclusion of into the cylinder.
    We claim that, since \(X\) is \(d\)-cofibrant, the inclusion into the cylinder is a \(d\)-cofibration. Indeed,
    a cell decomposition of this map is obtained by attaching an \((n+1)\)-cell for each \(n\)-cell in the decomposition of \(X\).
    Now, by \cref{lifting-property-n-cofibrations}, we can find a
    \((4(d+1)\delta)\)-lift of the diagram, which shows that
    \[
    \shi_{4(d+1)\delta,8(n+1)\delta}([g]^\delta \circ [p] \circ [i])
    =
    \shi_{0,8(d+1)\delta}([i]) : X \to X'^{8(d+1)\delta}.
    \]
    Since the left hand side is equal to
    \(\shi_{(4d+3)\delta, 4(d+1)\delta}([g])^{4(d+1)\delta} \circ \shi_{\delta, 4(d+1)\delta}([p] \circ [i])\), and \([i]\) is an isomorphism, it follows that
    \([g]^{4(d+1)\delta} \circ \shi_{\delta, 4(d+1)\delta}([p]) =
    \shi_{8(d+1)\delta}([\id_{X'}])\).
\end{proof}

\begin{rmk}
\label{comparison-persistent-whitehead}
Together, \cref{strictification-step} and \cref{homotopy-groups-to-noncoherent} imply 
a version of the persistent Whitehead conjecture, which we recall as \cref{bl-conjecture}.
Our result is, in a sense, stronger than the
one conjectured, since \cref{homotopy-groups-to-noncoherent},
which addresses part \((i)\) of the conjecture, applies to arbitrary multi-persistent spaces.
In another respect, our result is slightly weaker,
as the conjecture is stated for cofibrant, pointwise CW-complexes,
which does not necessarily imply being a persistent CW-complex in our sense.
We believe that this is not an issue, as many of the cofibrant, pointwise CW-complexes
persistent topological spaces that appear in applications are in fact persistent CW-complexes,
as they are usually the geometric realization of a filtered simplicial complex.
\end{rmk}

\begin{conjecture}[{\cite[Conjecture~8.6]{lb}}]
    \label{bl-conjecture}
    Suppose we are given connected, cofibrant $X,Y : \RR \to \Top$, with each $X(r)$ and $Y(r)$ CW-complexes of dimension at most $d$, and $f : X \to Y^\delta$ inducing a $\delta$-interleaving in all homotopy groups.
    Then, there is a constant $c$, depending only on $d$, such that
    \begin{itemize}
        \item[(i)] $f$ induces a $c\delta$-interleaving in the homotopy category $\Ho\left(\Top^\RR\right)$;
        \item[(ii)] $X$ and $Y$ are $c\delta$-homotopy interleaved.
    \end{itemize}
\end{conjecture}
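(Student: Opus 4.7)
The plan is to represent the given homotopy class $[f] \colon X \to Y^\delta$ by a projective fibration between projective fibrant objects, and then apply the shifted lifting property of \cref{lifting-property-n-cofibrations} to produce a reverse map that witnesses the interleaving in the homotopy category. Concretely, I would first choose a fibrant replacement $j \colon Y \to Y'$, realize $[f]$ as an honest map $X \to Y'^\delta$, and factor this as a trivial cofibration $i \colon X \to X'$ followed by a projective fibration $p \colon X' \to Y'^\delta$. Then $X'$ and $Y'$ are projective fibrant, both $i$ and $j$ are weak equivalences, and $p$, viewed as a $\delta$-morphism $X' \to_\delta Y'$, inherits the homotopy-group interleaving data from $[f]$ along the weak equivalences $i$ and $j$.

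Under the dimension hypotheses, \cref{sset-n-cofibrant} and \cref{cw-n-cofibrant} ensure that $X$ and $Y$ are $d$-cofibrant. I would then apply \cref{lifting-property-n-cofibrations} to the fibration $p$ against the $d$-cofibration $\emptyset \to Y$, with outer square given by $j$. This produces a $\delta$-morphism $g' \colon Y \to_{4(d+1)\delta} X'$ whose $p$-composite agrees with the appropriate shift of $j$. Extending $g'$ across the trivial cofibration $j$ into the (shifted) fibrant $X'$, using the standard lifting axiom, yields a map $g \colon Y' \to X'$ (up to a controlled shift), which will serve as the candidate reverse component of the interleaving.

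Verifying that $[p]$ and $[g]$ form the desired $(4(d+1)\delta)$-interleaving in $\Ho(\SS^{\RR^m})$ splits into two identities. The identity $[p] \circ [g] = \shi([\id_{Y'}])$, up to the appropriate shift, follows directly from the construction of $g'$ together with the invertibility of $[j]$ in the homotopy category. The identity $[g] \circ [p] = \shi([\id_{X'}])$ is more delicate: the two composites $g^\delta \circ p \circ i$ and $\shi(i)$ out of $X$ agree in the homotopy category after post-composing with a sufficient shift of $p$, and upgrading this to an honest homotopy between the maps themselves requires a second application of \cref{lifting-property-n-cofibrations}, this time to a cylinder inclusion for $X$. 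The main obstacle will be verifying that this cylinder inclusion is itself a $d$-cofibration, not a $(d+1)$-cofibration: a factorization of $\emptyset \to X$ into $d+1$ single-dimensional extensions induces a factorization of the cylinder inclusion into the same number of extensions (with each cell dimension raised by one), so the lifting cost remains $4(d+1)\delta$ rather than $4(d+2)\delta$, which is precisely what yields the constant stated in the theorem. The shift bookkeeping in this last step is the core technical ingredient.
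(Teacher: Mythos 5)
Your argument, as written, only produces part (i) of the conjecture: it is essentially a reconstruction of the paper's proof of \cref{homotopy-groups-to-noncoherent} (fibrant replacement, factorization through a projective fibration $p$, a $(4(d+1)\delta)$-lift of $p$ against $\emptyset \to Y$ via \cref{lifting-property-n-cofibrations}, and a second application to the cylinder inclusion on $X$, which is indeed a $d$-cofibration for exactly the reason you give). What is missing is part (ii). An interleaving in $\Ho\bigl(\Top^\RR\bigr)$ does not by itself give a homotopy interleaving, and no amount of shifted lifting will close that gap: the paper needs a separate rectification theorem (\cref{strictification-step}), proved by restricting to $\ZZ$-indexed diagrams, assembling the even/odd zig-zag object $C'$ in $\Ho(\M)^\ZZ$, and lifting it using fullness and conservativity of $\h$ (\cref{h-is-smothering}, \cref{noncoherent-to-strict-zed}). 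This step is genuinely non-formal -- by the Toda-bracket obstruction (\cref{lower-bound-prop}) one has $d_{HI} \neq d_{HC}$, so some multiplicative loss is unavoidable -- and it changes the constant: part (ii) is obtained only for $c$ slightly larger than $8(d+1)$, not for $4(d+1)$.

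A second, smaller issue is the opening step ``Under the dimension hypotheses, \cref{sset-n-cofibrant} and \cref{cw-n-cofibrant} ensure that $X$ and $Y$ are $d$-cofibrant.'' The conjecture only assumes that $X,Y$ are cofibrant with each $X(r)$, $Y(r)$ a CW-complex of dimension at most $d$; \cref{cw-n-cofibrant} requires $X$ and $Y$ to be persistent CW-complexes of dimension $d$ in the sense of \cref{persistent CW}, i.e.\ built from persistent cells attached in order of dimension, which does not follow from the pointwise hypothesis. The paper is explicit about this mismatch (\cref{comparison-persistent-whitehead}) and only claims a version of the conjecture under the stronger persistent-CW hypothesis; your proposal silently makes the same strengthening without flagging it, so even part (i) is established only for that restricted class of $X$ and $Y$.
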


\Urlmuskip=0mu plus 1mu\relax
\printbibliography

\end{document}